\DeclareMathOperator{\Span}{Span}
\DeclareMathOperator{\Lie}{Lie}
\DeclareMathOperator{\ad}{ad}
\DeclareMathOperator{\Diff}{Diff}
\DeclareMathOperator{\Ve}{Vec}
\DeclareMathOperator{\DHa}{DHam}
\DeclareMathOperator{\PWC}{PWC}
\DeclareMathOperator{\Vol}{Vol}
\newcommand{\p}{\partial}
\newcommand{\f}{\overrightarrow}
\newcommand{\e}{\varepsilon}
\newcommand{\om}{\omega}
\newcommand{\C}{{\mathbb C}}
\newcommand{\R}{{\mathbb R}}
\newcommand{\N}{{\mathbb N}}
\newcommand{\Z}{{\mathbb Z}}
\newcommand{\T}{{\mathbb T}}
\newcommand{\LL}{{\mathcal L}}
\newcommand{\1}{{\mathbb 1}}
\newcommand{\ro}{{\rho}}
\newcommand{\n}{{\eta}}
\newcommand{\smooth}{{\mathcal{C}^{\infty}}}
\newcommand{\ov}{\overline}
\newtheorem{theorem}{Theorem}
\newtheorem{lemma}[theorem]{Lemma}
\newtheorem{prop}[theorem]{Proposition}
\newtheorem{cor}[theorem]{Corollary}
\newtheorem{definition}[theorem]{Definition}
\newtheorem{rem}[theorem]{Remark}
\title{Orbits and attainable Hamiltonian diffeomorphisms of mechanical Liouville equations}
\author{Bettina Kazandjian\thanks{Sorbonne Université, Université Paris Cité, CNRS, Inria, Laboratoire Jacques Louis-Lions, Paris, France (bettina.kazandjian@sorbonne-universite.fr, mario.sigalotti@inria.fr)},\quad Eugenio Pozzoli\thanks{Univ Rennes, CNRS, IRMAR - UMR 6625, F-35000 Rennes, France (eugenio.pozzoli@univ-rennes.fr)},\quad Mario Sigalotti}
\begin{document}

\maketitle

\begin{abstract}
We study the approximate controllability problem for Liouville transport equations along a mechanical Hamiltonian vector field. Such PDEs evolve inside the orbit $$\mathcal{O}(\rho_0):=\left\{\rho_0\circ \Phi\mid \Phi\in {\rm DHam}(T^*M)\right\},\quad \rho_0\in L^r(T^*M,\R), \quad r\in[1,\infty),$$
where $\rho_0$ is the initial density and ${\rm DHam}(T^*M)$ is the group of Hamiltonian diffeomorphisms of the cotangent bundle manifold $T^*M$. The approximately reachable densities from $\rho_0$ are thus contained in $\ov{\mathcal{O}(\rho_0)}$, where the closure is taken with respect to the $L^r$-topology. Our first result is a characterization of $\ov{\mathcal{O}(\rho_0)}$ when the manifold $M$ is the Euclidean space $\R^d$ or the torus $\T^d$ of arbitrary dimension: $\ov{\mathcal{O}(\rho_0)}$ is the set of all the densities whose sub- and super-level sets have the same measure as those of $\rho_0$. This result is an approximate version, in the case of ${\rm DHam}(T^*M)$, of a theorem by J.~Moser (Trans. Am. Math. Soc. 120: 286-294, 1965) on the group of diffeomorphisms.

We then present two examples of systems, respectively on $M=\R^d$ and $\T^d$, where the small-time approximately attainable diffeomorphisms coincide with ${\rm DHam}(T^*M)$, respectively at the level of the group and at the level of the densities.

The proofs are based on the construction of Hamiltonian diffeomorphisms that approximate suitable permutations of finite grids, and Poisson bracket techniques.
\end{abstract}

\textbf{Keywords:} Group of Hamiltonian diffeomorphisms, controllability, Liouville transport equation.

\textbf{Mathematics Subject Classification 2020:} 93C20, 35Q49, 58D05, 37J39

\tableofcontents

\section{Introduction}
\subsection{The model}

Let $M$ be a smooth $d$-dimensional boundaryless Riemannian manifold.
Its cotangent bundle $T^{*}M$ is equipped with canonical symplectic and volume forms, that in local position and momentum coordinates $(q,p)=(q_1,\dots,q_d,p_1,\dots,p_d)$ can be expressed, respectively, as $\om = \sum_{i=1}^d dp_i \wedge dq_i$ and $\om\wedge\dots\wedge \om$ ($d$-times).  For any measurable set $A\subset T^* M$ we denote by $\Vol(A)\geq 0$ its volume.

In this article we study the controllability of Liouville equations describing the transportation of a density along a Hamiltonian vector field. Let $\smooth(T^* M,\R)$, ${\rm Ham}(T^* M)$, ${\rm DHam}(T^* M),$ be, respectively, the space of smooth ($\smooth$) functions, Hamiltonian vector fields, and Hamiltonian diffeomorphisms. The space of real-valued functions $\smooth(T^* M,\R)$ is a Lie algebra equipped with the Poisson bracket $\left\{f,g\right\}=\sum_{j=1}^d \partial_{p_j}f\partial_{q_j}g-\partial_{q_j}g\partial_{p_j}f.$
With any smooth function $f$ one can associate the corresponding Hamiltonian vector field $\f{f}=\left\{f,\cdot\right\}$. In particular, in this work we shall restrict ourselves to mechanical Hamiltonians, i.e. , Hamiltonians of the form
\begin{equation}\label{eq:mechanical-hamiltonian}
H_{u(t)}(q,p)=\frac{|p|^2}{2}+V_0(q)+\sum_{j=1}^m u_j(t)V_j(q),\quad (q,p)\in T^*M,
\end{equation}
which are the sum of a drift term, denoted as $H_0(q,p)=\frac{|p|^2}{2}+V_0(q)$ (the sum of kinetic and potential energy), and a potential $\sum_{j=1}^m u_j(t)V_j(q)$ controlled through  $u(\cdot)=(u_1(\cdot),\dots,u_m(\cdot))\in \PWC(\R_+,\R^m)$, were $\PWC$ is used to denote piecewise constant functions. We suppose in the following that for each $u\in \R^m$ the vector field $\f{H}_u=\left\{H_u,\cdot\right\}$ is complete.

Let us fix $r\in [1,\infty)$.
 Given $u(\cdot)\in\PWC(\R_+,\R^m)$, 
we consider the transportation of an initial density $\rho_0\in L^r(T^* M,\R)$ along the piecewise constant Hamiltonian vector field $t\mapsto \f{H}_{u(t)} \in {\rm Ham}(T^* M)$, given by the Liouville equation
\begin{equation}\label{eq:liouville}
\begin{cases}
\partial_t\rho(q,p,t)=\overrightarrow{H}_{u(t)}(q,p)\rho(q,p,t),&\\
\rho(q,p,0)=\rho_0(q,p).&
\end{cases}
\end{equation}
Equation \eqref{eq:liouville} more explicitly reads as the partial differential equation
$$\partial_t\rho(q,p,t)=p \cdot \nabla_q \rho(q,p,t)-\nabla_qV_0(q)\cdot \nabla_p\rho(q,p,t)-\sum_{j=1}^m u_j(t)\nabla_qV_j(q)\cdot \nabla_p\rho(q,p,t).$$
This is a bilinear (hence, non-linear) control problem, with infinite-dimensional state space $L^r(T^*M,\R)$.

Since the vector field $\f{H}_u$ is complete for every $u\in \R^m$, the 
Liouville equation is globally-in-time well posed, meaning that for any $\rho_0\in L^r(T^*M,\R),u(\cdot)\in \PWC(\R_+,\R^{m})$ there exists a unique mild solution $(t\mapsto \rho(t)=\rho(t;u(\cdot),\rho_0))\in \mathcal{C}^0(\R,L^r(T^*M,\R))$ of \eqref{eq:liouville}.
In this article we are interested in studying the small-time reachable sets of \eqref{eq:liouville}. 
\begin{definition}[Small-time reachable densities]
A density $\rho_1\in L^r(T^* M)$ is said to be \emph{small-time reachable for \eqref{eq:liouville}} if for every time $T> 0$ there exist a smaller time $\tau\in [0,T]$ and a control $u(\cdot)\in \PWC([0,\tau],\R^m)$ such that  $\rho(\tau;u(\cdot),\rho_0)=\rho_1$.
\end{definition}

 By the method of characteristics, the solution of \eqref{eq:liouville} can be written as
\begin{equation}\label{eq:solution}
\rho(t)=\rho_0\circ \Phi^t_{H_u},
\end{equation}
where $\Phi^t_{H_u}\in {\rm DHam}(T^* M)$ denotes the flow  on $T^* M$ of the non-autonomous Hamilton equation, which in local coordinates reads as
\begin{equation} \label{eq:hamilton}
    \left\{ \begin{array}{ll}
         \dot{q}=\nabla_pH_{u(t)}(q,p), \\
         \dot{p}=-\nabla_{q}H_{u(t)}(q,p). 
    \end{array}
    \right.
\end{equation}
The study of the reachable densities for \eqref{eq:liouville} can then be naturally lifted to the study of the reachable Hamiltonian diffeomorphisms for \eqref{eq:hamilton}. In fact, as one expects, controllability in the group of Hamiltonian diffeomorphisms implies controllability of the associated Liouville equation. 
\subsection{Approximate orbits of Hamiltonian diffeomorphisms on densities}
Due to \eqref{eq:solution}, smooth initial densities are preserved along the flow of \eqref{eq:liouville}, hence exact controllability in $L^r$-spaces is clearly impossible. We shall thus focus on the $L^r$-approximate controllability problem.
\begin{definition}[Small-time approximately reachable densities]
A density $\rho_1\in L^r(T^* M)$ is said to be 
\emph{small-time approximately reachable for \eqref{eq:liouville}} if for any $\varepsilon>0$ there exist a time $\tau\in[0,\varepsilon]$ and a control $u(\cdot)\in \PWC([0,\tau],\R^m)$ such that $\|\rho(\tau;u(\cdot),\rho_0)-\rho_1\|_{L^r(T^* M)}<\varepsilon$. In this case, we write $\rho_1\in \overline{R_{\rm st}}(\rho_0)$.
\end{definition}

According to \eqref{eq:solution}, the solutions of \eqref{eq:liouville} evolve in the set
\begin{equation}\label{eq:orbit}
\mathcal{O}(\rho_0)=\left\{\rho_0\circ \Phi\mid \Phi\in {\rm DHam}(T^* M)\right\}, 
\end{equation}
that is, the orbit of the group action of ${\rm DHam}(T^* M)$ on $L^r(T^* M)$ passing through $\rho_0$. As a consequence, the solution of system \eqref{eq:liouville} can approximately reach at most any density belonging to the $L^r$-closure of $\mathcal{O}(\rho_0)$: that is, 
$$\overline{R_{\rm st}}(\rho_0)\subset  \overline{\mathcal{O}(\rho_0)}.$$
Our first result is a characterization of the orbits' closure, when the underlying manifold is the torus or the Euclidean space of arbitrary dimension.
\begin{theorem}[$L^r$-closure of the orbits]\label{thm:closure-orbit}
Let $M$ be $\T^d=\R^d/2\pi\Z^d$ or $\R^d$ and $r\in[1,\infty)$. Then, for any $\rho_0\in L^r(T^*M)$,
\begin{equation}\label{eq:sub-levels}
 \overline{\mathcal{O}(\rho_0)}=\left\{\rho_1\in L^r(T^*M)\mid \forall \mu<\nu, \Vol(\{\mu<\rho_1<\nu\})=\Vol(\{\mu<\rho_0<\nu\})\right\},
 \end{equation}
where the closure is taken in the $L^r$-topology.
\end{theorem}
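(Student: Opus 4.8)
Write $\mathcal E(\rho_0)$ for the right-hand side of \eqref{eq:sub-levels}; the plan is to establish the two inclusions separately, the first being the easy one. For $\overline{\mathcal O(\rho_0)}\subseteq\mathcal E(\rho_0)$: every $\Phi\in\DHa(T^*M)$ preserves the volume form $\om^{\wedge d}$ (Liouville's theorem), so $\rho_0\circ\Phi$ is equimeasurable with $\rho_0$, i.e. $\mathcal O(\rho_0)\subseteq\mathcal E(\rho_0)$, and it then suffices to check that $\mathcal E(\rho_0)$ is $L^r$-closed. If $\rho_n\to\rho_1$ in $L^r$, then along a subsequence $\rho_n\to\rho_1$ a.e., and for every bounded continuous $\phi$ with $\phi(0)=0$ and $|\phi(t)|\le C|t|^r$ one gets $\int\phi(\rho_n)\to\int\phi(\rho_1)$ by dominated convergence (with dominating functions $C|\rho_n|^r$, which converge in $L^1$). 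Since $\int\phi(\rho_n)=\int\phi(\rho_0)$ for all $n$, this forces the pushforwards of $\Vol$ under $\rho_1$ and under $\rho_0$ to agree away from $0$, which is exactly the family of equalities defining $\mathcal E(\rho_0)$.

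For $\mathcal E(\rho_0)\subseteq\overline{\mathcal O(\rho_0)}$ I would fix $\rho_1\in\mathcal E(\rho_0)$ and $\e>0$ and construct $\Phi\in\DHa(T^*M)$ with $\|\rho_0\circ\Phi-\rho_1\|_{L^r}<\e$, first reducing to simple functions on a common cubic grid. Truncating the values of $\rho_0,\rho_1$ to $[-N,-\n]\cup[\n,N]$ and their supports to a large box costs less than $\e/4$ each in $L^r$ (absolute continuity of the integral together with dominated convergence), and, $\rho_0,\rho_1$ being equimeasurable, the truncations remain equimeasurable; approximating their common decreasing rearrangement by a step function and refining a grid of cubes of side $h$, one obtains simple functions $\tilde\rho_0=\sum_k c_k\1_{A^0_k}$, $\tilde\rho_1=\sum_k c_k\1_{A^1_k}$, each $A^i_k$ a finite union of grid cubes, with $\|\rho_i-\tilde\rho_i\|_{L^r}<\e/2$ and $A^0_k$, $A^1_k$ made of the same number of cubes for every $k$. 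Hence there is a permutation $\sigma$ of the finitely many grid cubes (acting on the grid region by the corresponding affine rearrangement and as the identity elsewhere) with $\tilde\rho_1=\tilde\rho_0\circ\sigma$. Since Hamiltonian diffeomorphisms are volume-preserving, $\|\rho_0\circ\Phi-\rho_1\|_{L^r}\le\|\rho_0-\tilde\rho_0\|_{L^r}+\|\tilde\rho_0\circ\Phi-\tilde\rho_0\circ\sigma\|_{L^r}+\|\tilde\rho_1-\rho_1\|_{L^r}$, and the middle term is at most $2\|\tilde\rho_0\|_\infty\,\Vol(E)^{1/r}$, where $E$ is the set of points at which $\Phi$ and $\sigma$ land in different grid cubes. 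Everything thus reduces to the claim: \emph{for every permutation $\sigma$ of a finite cubic grid in $T^*M$ and every $\delta>0$ there is $\Phi\in\DHa(T^*M)$ with $\Vol(E)<\delta$.}

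To prove this claim I would reduce the dimension. Edge-transpositions generate the whole symmetric group of the grid, and an edge-transposition interchanges two cubes differing in a single coordinate $q_i$ or $p_i$, hence lies in the subgroup $G_i$ of permutations acting independently inside each $(q_i,p_i)$-column and fixing all other coordinates; thus $\sigma$ factors as $\sigma=\sigma_L\circ\dots\circ\sigma_1$ with $\sigma_\ell\in G_{i_\ell}$, and it is enough to realize one factor $\sigma_\ell\in G_i$ up to volume $<\delta/L$. Such a factor prescribes, on each $(q_i,p_i)$-column, an area-preserving permutation of a planar grid of squares; in the symplectic $2$-plane any planar permutation of squares can be realized, off an arbitrarily small-area set, by a compactly supported Hamiltonian diffeomorphism of $\R^2$ (recall $H^1_c(\R^2)=0$, so compactly supported symplectic $=$ Hamiltonian), built from elementary transpositions of pairs of squares, each produced by a partial rotation of a thin annular region joining the two squares — equivalently, by sliding one square through a thin channel while sliding back the other — the disturbed area being controlled by the channel width. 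To turn these column-dependent planar maps into one Hamiltonian diffeomorphism of $T^*M$ I would refine the grid in the remaining $2d-2$ coordinates by thin buffer slabs, use on each non-buffer slab the (time-dependent) planar Hamiltonian of the corresponding column, and interpolate smoothly through the buffers; since the Hamiltonian is independent of $(q_i,p_i)$ outside the buffers, its flow keeps each non-buffer slab invariant and acts there by the prescribed planar permutation, while the volume disturbed by the channels and inside the buffers is $<\delta/L$. Composing the $L$ factors and adding the (volume-preserving) errors yields $\Phi$ with $\Vol(E)<\delta$.

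The main obstacle is precisely this last construction: producing, for an arbitrary permutation of a planar grid of squares, a compactly supported area-preserving (hence Hamiltonian) diffeomorphism of $\R^2$ realizing it up to arbitrarily small area — the thin-channel / annular-twist swap together with its area estimates — and the bookkeeping required to glue the column-wise maps into one smooth Hamiltonian and to keep all the errors summable over the $L=O(1)$ reduction steps. The passage to the planar case is essential, not cosmetic: Gromov's non-squeezing theorem obstructs the naive higher-dimensional version (one cannot symplectically thin a cube within a conjugate pair of coordinates), so the channels must live in $2$-planes, which is exactly what the column decomposition secures. The torus case $M=\T^d$ is handled identically, all the elementary diffeomorphisms being supported in coordinate charts; for $d=1$ the decomposition step is vacuous and one works directly in the $2$-dimensional $T^*M$.
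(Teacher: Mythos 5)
Your proposal has the same skeleton as the paper's: prove the easy inclusion via equimeasurability under volume-preserving maps, then prove the hard inclusion by (i) reducing $\rho_0,\rho_1$ to matched simple functions on a common cubic grid and (ii) realizing the resulting grid permutation by a Hamiltonian diffeomorphism. Step (i) is the paper's Lemma~\ref{lem9}, and your version is essentially the same; for the easy inclusion you use test functions $\int\phi(\rho_n)$ rather than the paper's direct comparison of the level sets $\{\mu<\rho<\nu\}$, which is a valid alternative, though note that the dominating functions $C|\rho_n|^r$ vary with $n$, so you need the generalized (Pratt) dominated convergence theorem, using $|\rho_n|^r\to|\rho_1|^r$ in $L^1$ via Scheff\'e.

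The genuine divergence is in step (ii), which is the paper's Lemma~\ref{lem10}, and here there is a real (and partly acknowledged) gap. You factor the grid permutation into column-wise $(q_i,p_i)$-permutations via edge-transpositions, then try to realize each factor by thin-channel or annular-twist diffeomorphisms in a 2-plane, with buffer slabs in the remaining coordinates, and you flag this as the main obstacle. The paper avoids the whole channel machinery. Its construction is: first a global vertical shear $e^{\f f}$ with $f$ locally of the form $a^j\cdot q$, which simultaneously slides each $q$-column of cubes down by a column-dependent amount so that all $p$-centers become $\ge 2h$ apart (no collisions, no channels); then a localized horizontal shear $e^{\f g}$ with $g$ locally linear in $p$, carrying each cube to its target $q$-position, now with free room because of the separation in $p$; finally the cubes in each target $q$-column are reshuffled by iterated localized translations ($d\ge2$) or by a compactly supported $\tfrac{\pi}{2}$-rotation $e^{\pi w\f{h_w}}$ exchanging two adjacent cubes ($d=1$). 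Crucially, the paper's $\Phi$ matches the permutation \emph{exactly} on the shrunken cubes $C(m_n,h-\eta)$ and the $L^r$-error comes only from the shrinkage $\eta$; there is no disturbed set $E$ to estimate and no channel area to sum over the $L$ factors. Your Gromov non-squeezing remark, while a reasonable worry for a channel-based scheme, is therefore not an obstruction the paper has to face: shears move cubes around without thinning them. In short, you arrived at the same reduction as Lemma~\ref{lem9}, but the difficult construction in Lemma~\ref{lem10}---which is exactly where you say the obstacle lies---is done in the paper by an explicit shear-then-rotate scheme that sidesteps the 2D channel construction, the factorization into $L$ column permutations, the buffer-slab interpolation, and the attendant error bookkeeping.
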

 Theorem~\ref{thm:closure-orbit} can be seen as the approximate version for Hamiltonian diffeomorphisms of a theorem by J.~Moser on the group of diffeomorphism ${\rm Diff}(M)$ of a connected boundaryless compact manifold $M$ \cite{moser}. More precisely, for any $\rho_0\in \smooth(M,(0,\infty))$ one can define the orbit of a group action of ${\rm Diff}(M)$ on $\smooth(M,(0,\infty))$ passing through $\rho_0$ as follows
$$\mathcal{O}_{\rm Diff}(\rho_0)=\left\{\det(DP)(\rho_0\circ P)\mid P\in {\rm Diff}(M)\right\}.$$
Then, Moser's theorem states that 
$$\mathcal{O}_{\rm Diff}(\rho_0)=\left\{\rho_1\in \smooth(M,(0,\infty))\mid \int_M \rho_1(x)dx=\int_M\rho_0(x)dx\right\}.$$
An exact counterpart for Hamiltonian diffeomorphisms of Moser's theorem should also take into account the topology of the sub- and super-level sets of $\rho_0$, which plays no role in the approximate version stated in Theorem~\ref{thm:closure-orbit}. This could be the subject of future research.
\\ \\
We now define the notion of approximate controllability for \eqref{eq:liouville}.
\begin{definition}[Small-time approximate controllability]
Equation \eqref{eq:liouville} is said to be \emph{small-time approximately controllable in $L^r(T^*M)$} if, for every $\rho_0\in L^r(T^*M)$ , 
$$\overline{R_{\rm st}}(\rho_0)= \overline{\mathcal{O}(\rho_0)}.$$
\end{definition}
In some situations it is easier to look at the approximate controllability problem lifted on the group ${\rm DHam}(T^* M)$. Since the simplectic 2-form on $T^*M$ is the differential of the Liouville 1-form $\sum_{i=1}^dp_idq_i$, it is exact. As a consequence, the flux group of $T^*M$ is trivial \cite[Proposition 3.47]{viterbo-notes}, implying that ${\rm DHam}(T^*M)$ is $\mathcal{C}^1$-closed \cite{ono}. This motivates the following notion of approximate reachability and controllability at the level of the group ${\rm DHam(T^*M)}$. 

\begin{definition}[Small-time approximately reachable diffeomorphisms and controllability] A Hamiltonian diffeomorphism $\Psi\in {\rm DHam}(T^* M)$ is said to be \emph{small-time approximately reachable for \eqref{eq:hamilton}} if for every $\varepsilon>0$, every compact $K\subset T^* M$, and every $\ell\in \N$ there exist a time $\tau\in[0,\varepsilon]$ and a control $u(\cdot)\in \PWC([0,\tau],\R^m)$ such that $\|\Phi^\tau_{H_u}-\Psi\|_{\mathcal{C}^\ell(K)}<\varepsilon$. In this case, we write $\Psi\in \overline{\mathcal{R}_{\rm st}}$.
\\ 
Equation \eqref{eq:hamilton} is said to be 
\emph{small-time approximately controllable in ${\rm DHam}(T^* M)$} if 
$$ \overline{\mathcal{R}_{\rm st}}={\rm DHam}(T^* M).$$
\end{definition}
This is a non-linear control problem, with infinite-dimensional state space ${\rm DHam}(T^*M)$.

We remark that the previous notion is equivalent to the notion of approximate controllability in the compact-open topology (whose definition is recalled in Section~\ref{sec:propertiesVF}).
We also notice that the definition can be extended to the case where the vector fields $\f{H_u}$ are not necessarily complete, up to considering flows on compacts.

As previously announced, approximate controllability at the level of the group implies approximate controllability at the level of the densities.
\begin{lemma} \label{lem5}
    If $\Psi \in \DHa(T^* M)$ is such that $\Psi\in \overline{\mathcal{R}_{\rm st}}$, then $\ro_0 \circ \Psi \in \overline{R_{\rm st}}(\rho_0)$ for every $\ro_0 \in L^r(T^* M)$.
\end{lemma}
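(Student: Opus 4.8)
The plan is to transfer the $\mathcal{C}^\ell$-on-compacts approximation of $\Psi$ by reachable flows $\Phi^\tau_{H_u}$ into an $L^r$-approximation of $\rho_0\circ\Psi$ by the corresponding solutions $\rho_0\circ\Phi^\tau_{H_u}$. First I would fix $\varepsilon>0$ and reduce to the case of a ``nice'' density: since compactly supported continuous functions are dense in $L^r(T^*M)$, choose $g\in\mathcal{C}^0_c(T^*M)$ with $\|\rho_0-g\|_{L^r}<\varepsilon$. The key point is that precomposition with a symplectic (hence volume-preserving) diffeomorphism is an isometry of $L^r$: $\|\rho_0\circ\Phi - g\circ\Phi\|_{L^r}=\|\rho_0-g\|_{L^r}$ for every $\Phi\in\DHa(T^*M)$, and likewise with $\Phi$ replaced by $\Psi$. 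So it suffices to approximate $g\circ\Psi$ by $g\circ\Phi^\tau_{H_u}$ in $L^r$, up to an error controlled by $\|\rho_0-g\|_{L^r}$ via the triangle inequality.

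Next I would pick a compact set $K\subset T^*M$ large enough that $\Psi^{-1}(\supp g)$ and a neighborhood of it lie in the interior of $K$; more concretely, since $g$ has compact support and $\Psi$ is a homeomorphism, one can choose a compact $K$ and $\delta>0$ so that any diffeomorphism $\Phi$ with $\|\Phi-\Psi\|_{\mathcal{C}^0(K)}<\delta$ satisfies $\supp(g\circ\Phi)\subset K'$ for a fixed compact $K'$, and both $g\circ\Psi$ and $g\circ\Phi$ are supported in $K'$. Applying the hypothesis $\Psi\in\overline{\mathcal{R}_{\rm st}}$ with this $K$, the error $\varepsilon$, and $\ell=1$ (indeed $\ell=0$ suffices here), I obtain $\tau\in[0,\varepsilon]$ and $u(\cdot)\in\PWC([0,\tau],\R^m)$ with $\|\Phi^\tau_{H_u}-\Psi\|_{\mathcal{C}^0(K)}$ as small as desired. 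Since $g$ is uniformly continuous, $\|g\circ\Phi^\tau_{H_u}-g\circ\Psi\|_{L^\infty}$ is small, and because both functions are supported in the fixed compact $K'$ of finite volume, smallness in $L^\infty$ forces smallness in $L^r$: $\|g\circ\Phi^\tau_{H_u}-g\circ\Psi\|_{L^r}\le \Vol(K')^{1/r}\|g\circ\Phi^\tau_{H_u}-g\circ\Psi\|_{L^\infty}$.

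Putting the pieces together with the triangle inequality,
\begin{align*}
\|\rho_0\circ\Phi^\tau_{H_u}-\rho_0\circ\Psi\|_{L^r}
&\le \|\rho_0\circ\Phi^\tau_{H_u}-g\circ\Phi^\tau_{H_u}\|_{L^r}
+\|g\circ\Phi^\tau_{H_u}-g\circ\Psi\|_{L^r}
+\|g\circ\Psi-\rho_0\circ\Psi\|_{L^r}\\
&= \|\rho_0-g\|_{L^r}+\|g\circ\Phi^\tau_{H_u}-g\circ\Psi\|_{L^r}+\|\rho_0-g\|_{L^r}
< 3\varepsilon,
\end{align*}
where the equality uses that $\Phi^\tau_{H_u}$ and $\Psi$ preserve $\Vol$. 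Recalling that $\rho(\tau;u(\cdot),\rho_0)=\rho_0\circ\Phi^\tau_{H_u}$ by \eqref{eq:solution} and that $\tau\le\varepsilon$, this shows $\rho_0\circ\Psi\in\overline{R_{\rm st}}(\rho_0)$. The main (very mild) obstacle is the bookkeeping in the previous paragraph: one must choose the approximation threshold in the $\mathcal{C}^0(K)$-hypothesis not just small in absolute terms but small enough—depending on $g$, $K$, and $\Vol(K')$—to guarantee both that the supports stay inside $K'$ and that the resulting $L^\infty$-then-$L^r$ estimate falls below $\varepsilon$; everything else is routine. Note also that completeness of $\f{H}_u$ is used only to make $\Phi^\tau_{H_u}$ globally defined, and that in the non-complete case one works directly on compacts as remarked after the definition.
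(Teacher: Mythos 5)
Your proof is correct and follows essentially the same route as the paper's: approximate $\rho_0$ by a nice compactly supported function, exploit that precomposition with a volume-preserving diffeomorphism is an $L^r$-isometry (the paper factors this reduction out as Lemma~\ref{lem3}), then convert the $\mathcal C^0$-on-compacts approximation of $\Psi$ into an $L^r$-approximation via a modulus-of-continuity bound, after ensuring the relevant preimages of the support stay inside a fixed compact. The only cosmetic deviations are that the paper reduces to $\mathcal C^\infty_c$ and uses the Lipschitz bound $\|D\rho_0\|_{0,K}$ where you reduce to $\mathcal C^0_c$ and invoke uniform continuity, and the paper's $\delta_n$/sphere construction is its concrete implementation of the support-containment step that you state more informally (``one can choose a compact $K$ and $\delta>0$ so that\ldots''), which is the one genuinely delicate point of the argument.
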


\subsection{Examples of approximate controllability in ${\rm DHam}(T^*\R^d)$ and $L^r(T^*\T^d,\R)$}
The scope of the second part of the paper is to present two examples of systems 
for which 
the previously introduced controllability properties hold. 
The first system that we consider is posed in an Euclidean space of arbitrary dimension, the second one in a torus of arbitrary dimension (both equipped with the standard simplectic form).

\subsubsection{A system in $\R^d$}

\noindent
We consider the mechanical Hamiltonian
\begin{equation}\label{eq:euclidean}
H_u(q,p)=\frac{|p|^2}{2}+V_0(q)+\sum_{j=1}^d u_j q_j+u_{d+1}e^{-|q|^2/2},\qquad (q,p)\in T^*\R^d=\R^d_q\times \R^d_p,
\end{equation}
with $V_0\in \smooth(\R^d_q,\R)$ such that $\nabla
V_0$ is globally Lipschitz continuous (this last hypothesis is used to guarantee that each Hamiltonian vector field $\f{H_u}$ is complete). 
Here and in the following $\smooth(\R^d_q,\R)$ is used to denote the space of functions on $T^*\R^d$ that depend only on the state variable $q$. Similarly, we will consider later $\smooth(\R^d_p,\R)$ and $\smooth(\T^d_q,\R)$.
Since the vector fields $\f{q_1}=-\partial_{p_1},\dots,\f{q_d}=-\partial_{p_d}$, $\f{e^{-|q|^2/2}}=e^{-|q|^2/2}\sum_{j=1}^d q_j \partial_{p_j}$ are also globally Lipschitz continuous, system \eqref{eq:hamilton}, \eqref{eq:euclidean} (and hence also \eqref{eq:liouville}, \eqref{eq:euclidean}) is globally-in-time well posed. 
Here and in similar situations in the rest of the paper, with a slight abuse of notation,
$\f{q_i}$ stays for $\f{\varphi}$ where $\varphi:(q,p)\mapsto q_i$ and similarly for $\f{e^{-|q|^2/2}}$. 

For the system associated with the Hamiltonian introduced in \eqref{eq:euclidean} we prove the following result.
\begin{theorem}\label{thm:euclidean}
The Hamilton equations \eqref{eq:hamilton}, \eqref{eq:euclidean} are small-time approximately controllable in ${\rm DHam}(T^*\R^d)$.
\end{theorem}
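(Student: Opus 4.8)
The plan is to proceed in two stages: first identify a rich family of Hamiltonian diffeomorphisms that are small-time approximately attainable, and then show that such a family is dense in $\DHa(T^*\R^d)$.

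\emph{Reduction to a Lie-theoretic statement.} Since $\f{H}_u=\f{H}_0+\sum_j u_j\f{V}_j$ with $V_1=q_1,\dots,V_d=q_d$, $V_{d+1}=e^{-|q|^2/2}$, I would start from the standard time-rescaling (fast-control) argument: keeping $u_j\equiv c$ on a physical interval of length $\tau$ and letting $c\to\pm\infty$ with $c\tau$ fixed, the flow of \eqref{eq:hamilton} converges, in $\smooth$ on compacts, to the time-$(c\tau)$ flow of $\f{V}_j$, while $\tau\to0$; concatenating such pieces with short intervals of free evolution and using the usual commutator/BCH identities, one checks that $\overline{\mathcal R_{\rm st}}$ contains $\Phi^t_{\f g}$ for all $t\in\R$ and every $g$ in a closed linear subspace $\mathcal W\subset\smooth(T^*\R^d)$ which is stable under Poisson bracket and contains $\pm q_j$ and $\pm e^{-|q|^2/2}$, whereas the drift enters $\overline{\mathcal R_{\rm st}}$ only forward and for arbitrarily small physical time, through maps of the form $\Phi^\sigma_{\f{H_0\circ\chi}}$ with $\chi$ invertible in $\overline{\mathcal R_{\rm st}}$ and $\sigma>0$ small. (This is the Lie-saturation bookkeeping, with the extra small-time constraint.) Because $\overline{\mathcal R_{\rm st}}$ is closed under composition and under the relevant approximations, and because any element of $\DHa(T^*\R^d)$ is, up to a $\smooth$-on-$K$ error, a finite composition $\Phi^1_{\f{\varphi_N}}\circ\cdots\circ\Phi^1_{\f{\varphi_1}}$ of autonomous time-$1$ Hamiltonian flows (approximate the generating, compactly supported, time-dependent isotopy by a piecewise constant one), the theorem reduces to showing that $\mathcal W$ is dense in $\smooth(T^*\R^d)$ for the $\smooth$-on-compacts topology; the grid-permutation constructions of the paper are the concrete device realizing the elementary building blocks within time $\varepsilon$.

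\emph{Bootstrapping $\mathcal W$ via Poisson brackets.} The core computation is $\{H_0,q_j\}=\pm p_j$ together with $\{q_j,\{q_j,H_0\}\}=\mathrm{const}$: conjugating the drift by the fast flow of $\f{q_j}$ produces $\Phi^\sigma_{\f{H_0\mp s p_j}}$, and letting $\sigma\to0$ with $\sigma s$ fixed yields $\Phi^t_{\f{p_j}}$ for all $t$, hence $p_j\in\mathcal W$. Conjugating $e^{-|q|^2/2}$ by the flows $\Phi^t_{\f{p_j}}$ gives all shifted Gaussians $e^{-|q-a|^2/2}$, and taking finite combinations and $a$-derivatives, all $q^\alpha e^{-|q|^2/2}$, whose closed span contains $\smooth(\R^d_q)$; thus $\smooth(\R^d_q)\subset\mathcal W$. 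Next, conjugating $\f{H}_0$ by the flow of $\varphi\in\smooth(\R^d_q)$ gives the forward flow of $\tfrac12|p|^2-s\langle p,\nabla\varphi\rangle+\tfrac{s^2}2|\nabla\varphi|^2+V_0$; the purely $q$-dependent part lies in $\mathcal W$ and can be cancelled by a short extra piece, and the rescaling $\sigma\to0$ with $\sigma s$ fixed makes the $\tfrac12|p|^2$ term disappear in the limit, producing $\Phi^t_{\f{\langle p,\nabla\varphi\rangle}}$ for all $t$. Brackets of such fields, via $\{\langle p,\psi\rangle,\langle p,\chi\rangle\}=\langle p,[\psi,\chi]\rangle$ and the fact that gradient vector fields Lie-generate a dense subalgebra of $\Ve(\R^d_q)$, give all $\langle p,\psi(q)\rangle\in\mathcal W$. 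Climbing one degree in $p$: conjugating $\f{H}_0$ by flows of elements of the form $\langle p,\psi\rangle$ and $p_j\varphi(q)$ and discarding the (now $\mathcal W$-valued) lower-order parts produces the forward flows of all quadratic Hamiltonians $\tfrac12 p^\top G(q)p$; composing the associated linear symplectic shears with the flows of $\tfrac12 q^\top Cq\in\mathcal W$ generates the whole group ${\rm Sp}(2d,\R)$ in small time — the key point being that a suitable product of positive-definite shears and lower shears has finite order, so its inverse is again small-time attainable, which circumvents the one-directionality of the drift at the quadratic level. Finally, conjugating $\smooth(\R^d_q)$ by linear symplectic maps yields $\smooth(\R^d_p)$, and Poisson brackets of shifted $q$- and $p$-Gaussians, e.g.\ $\{e^{-|q-a|^2/2},e^{-|p-b|^2/2}\}=\pm\langle q-a,p-b\rangle e^{-(|q-a|^2+|p-b|^2)/2}$, produce polynomials times Gaussians in $(q,p)$, whose closed span is $\smooth(T^*\R^d)$. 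Hence $\mathcal W=\smooth(T^*\R^d)$, and by the reduction above $\overline{\mathcal R_{\rm st}}=\DHa(T^*\R^d)$.

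\emph{Main obstacle.} The difficulty, and the place where the mechanical form $H_0=|p|^2/2+V_0$ is genuinely exploited, is the drift: it runs only forward and only for small physical time, so it cannot be freely bracketed. The remedy is the graded bootstrap above — extract $p_j$ first (its iterated bracket with $q_j$ is constant, so no correction survives), then $\smooth(\R^d_q)$, then at each higher degree in $p$ cancel the quadratic corrections produced by the drift conjugations against Hamiltonians already known to be in $\mathcal W$, and reverse the otherwise one-directional quadratic-in-$p$ flows using the finite order of a suitable product of shears. I expect the most delicate point to be precisely the bookkeeping that certifies that all these rescalings, cancellations and conjugations can be performed within physical time $\varepsilon$, so that one genuinely obtains \emph{small-time} approximate controllability rather than merely controllability.
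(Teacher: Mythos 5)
Your proposal follows the same overall strategy as the paper's proof, and I believe it would go through if all the sketched steps were filled in, but it takes a genuinely different route in the final reduction step and in the organization of the $p$-bootstrap; it is worth comparing the two.

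\emph{Where the approaches agree.} Both proofs build the same object: a closed Poisson subalgebra $\mathcal W$ of small-time-attainable Hamiltonians (what the paper calls STAR functions, Proposition~\ref{prop4}), seeded by $q_1,\dots,q_d,e^{-|q|^2/2}$ via the fast-control limit (Proposition~\ref{prop6}). Both extract $p_j$ by conjugating the drift with the flow of $q_j$ and rescaling, then grow $\smooth(\R^d_q)\subset\mathcal W$ from shifted Gaussians/Hermite functions. Both identify the same obstruction — the drift runs only forward — and both resolve it at the quadratic-in-$p$ level by exploiting a finite-order element of ${\rm Sp}(2d,\R)$: your "suitable product of positive-definite shears and lower shears has finite order" is exactly the paper's harmonic-oscillator flow $e^{v\f{(|p|^2+|q|^2)/2}}$ being $2\pi$-periodic (Proposition~\ref{th3}), dressed in Iwasawa-type rather than one-parameter-group language.

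\emph{Where they differ.} The paper invokes the Berger--Turaev theorem (Theorem~\ref{th1}), which decomposes any Hamiltonian diffeomorphism of $T^*\R^d$ into finite products of vertical and horizontal shears; this lets them stop at $\smooth(\R^d_q)\cup\smooth(\R^d_p)\subset\mathcal W$ and conclude via Corollary~\ref{cor:berger}. You replace Berger--Turaev with the more elementary fact that any $\Phi\in\DHa(T^*\R^d)$ is, on compacts, a limit of finite compositions of time-$1$ flows of autonomous Hamiltonians, at the cost of having to prove the stronger density $\mathcal W=\smooth(T^*\R^d)$. This is a real tradeoff: you trade a deep external decomposition theorem for a heavier density claim. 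That said, $\mathcal W=\smooth(T^*\R^d)$ actually \emph{follows} from $\smooth(\R^d_q)\cup\smooth(\R^d_p)\subset\mathcal W$ because $\mathcal W$ is a closed Poisson subalgebra, so your route is not genuinely more demanding once the paper's subalgebra property (Proposition~\ref{prop4}) is in hand; it is simply a different packaging. The paper's route for $\smooth(\R^d_p)$ is also cleaner than yours: rather than climbing degrees in $p$ through $\langle p,\nabla\varphi\rangle$, linear shears and ${\rm Sp}(2d,\R)$, it gets all $p$-monomials in one stroke from $\ad^m_{|p|^2/2}\bigl(\tfrac1{m!}q_1^{m_1}\cdots q_d^{m_d}\bigr)=p_1^{m_1}\cdots p_d^{m_d}$ once $|p|^2$ is known to be STAR.

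\emph{A point to tighten.} When you extract $\langle p,\nabla\varphi\rangle$ by conjugating $\f{H}_0$ with the flow of $\varphi\in\smooth(\R^d_q)$, the resulting Hamiltonian contains a term $\tfrac{s^2}2|\nabla\varphi|^2$ which, under the rescaling $\sigma\to0$, $\sigma s=t$ fixed, carries a coefficient $t^2/(2\sigma)\to\infty$. You acknowledge that this "purely $q$-dependent part can be cancelled by a short extra piece," but this is precisely the place where a Lie--Trotter argument (Proposition~\ref{th2}) has to be inserted and the small-time bookkeeping verified; the paper sidesteps the issue entirely by conjugating $p_i$ (already STAR, with $\{q_i,\{q_i,p_i\}\}$ constant) rather than $H_0$ in its second step of Theorem~\ref{th5}, so no divergent correction arises there. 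You should either adopt that device or spell out the cancellation. Similarly, the claim that gradient vector fields Lie-generate a dense subalgebra of $\Ve(\R^d)$ and the density of polynomial-times-Gaussian functions in $\smooth(T^*\R^d)$ for the compact-open topology are correct but would need to be justified.
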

As a consequence of  Lemma~\ref{lem5} and Theorem~\ref{thm:euclidean}, we also get the following result.
\begin{cor}
The Liouville equation \eqref{eq:liouville}, \eqref{eq:euclidean} is small-time approximately controllable in $L^r(T^*\R^d)$, $r\in[1,\infty)$.
\end{cor}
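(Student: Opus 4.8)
The plan is to combine the three ingredients already assembled in the paper: Lemma~\ref{lem5}, Theorem~\ref{thm:euclidean}, and Theorem~\ref{thm:closure-orbit}. First I would fix $\rho_0\in L^r(T^*\R^d)$ and recall the two inclusions that are immediate from the definitions, namely $\overline{R_{\rm st}}(\rho_0)\subset\overline{\mathcal{O}(\rho_0)}$ (since the flow always acts by composition with an element of $\mathrm{DHam}(T^*\R^d)$, as recorded in \eqref{eq:solution}–\eqref{eq:orbit}). It therefore suffices to prove the reverse inclusion $\overline{\mathcal{O}(\rho_0)}\subset\overline{R_{\rm st}}(\rho_0)$.

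The core step is to show $\mathcal{O}(\rho_0)\subset\overline{R_{\rm st}}(\rho_0)$, i.e.\ that every $\rho_0\circ\Psi$ with $\Psi\in\mathrm{DHam}(T^*\R^d)$ is small-time approximately reachable. This is exactly what Lemma~\ref{lem5} gives once we know $\Psi\in\overline{\mathcal{R}_{\rm st}}$; but Theorem~\ref{thm:euclidean} asserts precisely that $\overline{\mathcal{R}_{\rm st}}=\mathrm{DHam}(T^*\R^d)$, so the hypothesis of Lemma~\ref{lem5} is satisfied for every $\Psi$. Hence $\rho_0\circ\Psi\in\overline{R_{\rm st}}(\rho_0)$ for all $\Psi\in\mathrm{DHam}(T^*\R^d)$, that is, $\mathcal{O}(\rho_0)\subset\overline{R_{\rm st}}(\rho_0)$.

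Finally I would take closures. The set $\overline{R_{\rm st}}(\rho_0)$ is by definition $L^r$-closed, so from $\mathcal{O}(\rho_0)\subset\overline{R_{\rm st}}(\rho_0)$ we get $\overline{\mathcal{O}(\rho_0)}\subset\overline{R_{\rm st}}(\rho_0)$. Together with the trivial inclusion this yields $\overline{R_{\rm st}}(\rho_0)=\overline{\mathcal{O}(\rho_0)}$ for every $\rho_0$, which is precisely the definition of small-time approximate controllability in $L^r(T^*\R^d)$. (One may additionally invoke the explicit description of $\overline{\mathcal{O}(\rho_0)}$ from Theorem~\ref{thm:closure-orbit}, but it is not needed for the statement of the corollary.)

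The only point deserving care — and the place where the real work sits — is verifying that $\overline{R_{\rm st}}(\rho_0)$ is genuinely closed in $L^r$ and that the quantifier structure in the definitions of $\overline{\mathcal{R}_{\rm st}}$ and $\overline{R_{\rm st}}(\rho_0)$ match up through Lemma~\ref{lem5}: one must check that a $\mathcal{C}^\ell$-on-compacts approximation of the diffeomorphism, fed through composition, produces an $L^r$-approximation of the density uniformly in the choice of approximating diffeomorphism, which is the content of Lemma~\ref{lem5} and requires the $L^r$-continuity of $\Psi\mapsto\rho_0\circ\Psi$ plus an approximation of $\rho_0$ by compactly supported continuous functions. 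All of this is encapsulated in the already-proved Lemma~\ref{lem5}, so at the level of this corollary the argument is a short formal chaining of the previous results.
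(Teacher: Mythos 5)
Your argument is correct and follows essentially the same route as the paper: the corollary is obtained by feeding Theorem~\ref{thm:euclidean} into Lemma~\ref{lem5} to get $\mathcal{O}(\rho_0)\subset\overline{R_{\rm st}}(\rho_0)$, then using $L^r$-closedness of $\overline{R_{\rm st}}(\rho_0)$ together with the trivial inclusion $\overline{R_{\rm st}}(\rho_0)\subset\overline{\mathcal{O}(\rho_0)}$. One small correction: that closedness is not ``by definition'' but is the content of the second bullet of Lemma~\ref{lem4}, which you should cite rather than re-derive.
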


\subsubsection{A system in $\T^d$}

\noindent
We consider the mechanical Hamiltonian
\begin{equation}\label{eq:torus}
H_u=\frac{|p|^2}{2}+V_0(q)+\sum_{j=1}^d u_{2j-1}\cos(k_j\cdot q)+u_{2j}\sin(k_j\cdot q),\quad (q,p)\in T^*\T^d=\T^d_q\times \R^d_p,
\end{equation}
where $V_0\in \smooth(\T^d_q,\R)$ and 
\begin{equation}\label{eq:frequencies}
k_1=(1,0,\dots,0),\dots,k_{d-1}=(0,\dots,0,1,0),\quad k_d=(1,\dots,1).
\end{equation}
For such system we prove the following result.
\begin{theorem}\label{thm:torus}
The Liouville equation \eqref{eq:liouville}, \eqref{eq:torus} is small-time approximately controllable in $L^r(T^*\T^d)$, $r\in[1,\infty)$.
\end{theorem}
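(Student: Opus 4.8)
The plan is to reduce, via Theorem~\ref{thm:closure-orbit}, the approximate controllability of \eqref{eq:liouville}, \eqref{eq:torus} to a statement about reachable Hamiltonian diffeomorphisms, and then to run a Poisson-bracket/saturation analysis of \eqref{eq:hamilton}, \eqref{eq:torus} along the lines of the proof of Theorem~\ref{thm:euclidean}, adapted to the torus. First I would note that, by Theorem~\ref{thm:closure-orbit}, small-time approximate controllability of \eqref{eq:liouville}, \eqref{eq:torus} is equivalent to $\overline{\mathcal{O}(\rho_0)}\subseteq\overline{R_{\rm st}}(\rho_0)$ for every $\rho_0\in L^r(T^*\T^d)$. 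Since $\overline{R_{\rm st}}(\rho_0)$ is $L^r$-closed and, by the proof of Theorem~\ref{thm:closure-orbit}, every element of $\overline{\mathcal{O}(\rho_0)}$ is an $L^r$-limit of densities $\rho_0\circ\Phi$ with $\Phi$ a \emph{grid rearrangement} --- a Hamiltonian diffeomorphism equal to the identity outside a compact cylinder $\T^d_q\times[-R,R]^d_p$ and permuting the cells of a fine cubic grid of that cylinder --- it will suffice, by Lemma~\ref{lem5}, to show that each grid rearrangement belongs to $\overline{\mathcal{R}_{\rm st}}$; and here the grid may be refined and $R$ enlarged as the required precision improves.

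Next I would identify the reachable infinitesimal motions. Because the controls are unbounded, a time-reparametrization argument gives that for every $s$ and $j$ the flow $e^{s\f{V_j}}$ of the control Hamiltonian $V_j\in\{\cos(k_j\cdot q),\sin(k_j\cdot q)\}$ --- the fibrewise momentum translation $(q,p)\mapsto(q,p-s\nabla_q V_j(q))$ --- is small-time approximately reachable, as is every finite product of such flows composed with a short piece $e^{t\f{H_0}}$ of the drift. Since $\{V_i,V_j\}=0$ these translations commute, yielding all $e^{\f W}$ with $W\in\Span\{\cos(k_j\cdot q),\sin(k_j\cdot q)\}$. I would then conjugate a short drift by these translations, $e^{-\f W}\circ e^{t\f{H_0}}\circ e^{\f W}=e^{t\f{\widetilde H}}$ with $\widetilde H=\tfrac12|p-\nabla_q W|^2+V_0$, and, letting $\|W\|$ grow as $t\downarrow0$ while tracking the leading Baker--Campbell--Hausdorff terms, extract as reachable the fibrewise translation by $d(|\nabla_q W|^2)$ and the cotangent lift of the gradient flow $\dot q=\nabla_q W(q)$. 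The first of these brings in the frequencies $k_i\pm k_j$; since by \eqref{eq:frequencies} the $k_j$ generate $\Z^d$, iterating this should produce $e^{\f W}$ for \emph{every} $W\in\smooth(\T^d_q,\R)$, and, together with the gradient-flow lifts and Poisson brackets, all fibrewise translations by exact $1$-forms and all cotangent lifts of a $\mathcal{C}^\infty$-dense subgroup of $\Diff_0(\T^d)$. Finally I would use that $e^{t\f{H_0}}$ equals, to leading order in $t$, the $q$-shear $(q,p)\mapsto(q+tp,p)$ post-composed with a (correctable) fibrewise translation, so short drift pieces also provide $q$-shears.

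To finish, I would build each grid rearrangement by decomposing it into a bounded number of transpositions of adjacent cells, and realizing each transposition by alternating localized $q$-shears (short drift pieces, moving the grid rows by amounts proportional to $p$) with localized $p$-shears $e^{\f W}$ --- refining the grid makes the shears needed to move one cell proportionally smaller, so the total drift time can be kept below the target precision. I expect the main obstacle to be exactly the one-sidedness of the drift: $\f{H_0}$ can only be run forward, so it cannot be conjugated away, and every step above must be reorganized so that the drift is used for an arbitrarily small total time while the unbounded controls do the actual transport --- in particular the bracket extractions must be carried out through suitable finite sequences of one-sided maneuvers, as in the proof of Theorem~\ref{thm:euclidean}. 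This limitation is presumably also why the conclusion is phrased at the level of densities rather than of the group: the argument reaches all grid rearrangements (hence $\rho_0\circ\Phi\in\overline{R_{\rm st}}(\rho_0)$ for all such $\Phi$, which suffices by the first step), without necessarily reaching all of $\DHa(T^*\T^d)$.
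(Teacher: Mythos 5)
The proposal's reduction step is sound: by Lemmas~\ref{lem9}, \ref{lem5}, and \ref{lem4}, it would indeed suffice to show that each of the compactly supported Hamiltonian diffeomorphisms realizing the mesh permutations of Lemma~\ref{lem10} belongs to $\ov{\mathcal{R}_{\rm st}}$. However, that group-level claim is the heart of the matter, and the argument offered for it does not go through. The diffeomorphism $\phi$ constructed in Lemma~\ref{lem10} is built from \emph{localized} horizontal shears $e^{\f g}$ with $g$ linear in $p$ supported near a column, and from \emph{localized} half-rotations $e^{\pi w\f{h_w}}$ with $h_w$ quadratic in $p$ supported near a pair of cells. Neither of these is of the mechanical form $|p|^2/2 + V(q)$, and the control system can only produce $q$-motion through the global, one-sided drift $e^{t\f{H_0}}$. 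Your step ``short drift pieces \dots{} provide $q$-shears'' gives only the non-localized shear $(q,p)\mapsto(q+tp,p)$ acting simultaneously on every cell of the grid, including cells at large $|p|$ which get displaced by $O(t|p|)$; you cannot restrict this motion to a single cell or column, and you cannot run the drift backward to undo it. Refining the mesh does not help: the displacement needed to swap two adjacent cells scales with the mesh size $h$, but the parasitic displacement of all other cells scales with the \emph{same} drift time, so there is no favorable ratio. In short, the localization problem for $q$-motion is not addressed.

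This is precisely the obstruction the paper circumvents by stepping down to the level of densities. The paper's actual argument for the torus (Section~\ref{sec:7}) has three ingredients that your sketch omits: (1) the Berger--Turaev decomposition into vertical and horizontal shears (Theorem~\ref{th1}, Corollary~\ref{cor:berger}), replacing your grid-rearrangement reduction; (2) the saturation argument for vertical shears (Theorem~\ref{th7}), which is the only part morally parallel to your BCH/frequency-generation step; and, crucially, (3) the non-Hamiltonian symmetry $S(q,p)=(q,-p)$ (Lemma~\ref{lem6}), which is approximately reachable \emph{on densities} --- not in $\DHa(T^*\T^d)$ --- via localized half-rotations $e^{\pi w\f{(|p|^2/2+f)}}$ with $f$ a STAR potential. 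The relation $S\circ e^{t\f{|p|^2/2}}\circ S = e^{-t\f{|p|^2/2}}$ then supplies backward drift at the density level, after which Poisson brackets produce all momentum polynomials. Your proposal replaces this device with a direct group-level construction of permutations, but the paper explicitly records that group-level small-time controllability of \eqref{eq:hamilton}, \eqref{eq:torus} is an open problem; a construction that reached all compactly supported grid permutations in $\ov{\mathcal{R}_{\rm st}}$ would be substantial progress on that question and would need a genuinely new idea to handle the non-localized, one-sided drift. As written, the proposal assumes away the main difficulty rather than solving it.
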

To the best of our knowledge, it is an open problem whether the analogue of Theorem~\ref{thm:euclidean} for the Hamilton equations \eqref{eq:hamilton}, \eqref{eq:torus} holds. However, it is at least possible to control finite ensembles of points of arbitrary cardinality in $T^*\T^d$. Let us stress that such controllability property is not only approximate but also \emph{exact}, as shown in Section~\ref{sec:8}. 

\subsection{Proof strategy} 
Concerning Theorem \ref{thm:closure-orbit}, the inclusion $\ov{\mathcal{O}(\rho_0)}\subset\mathcal{L}(\rho_0)$ readily follows from the fact that Hamiltonian diffeomorphisms are measure-preserving, in the sense that $\det (D\Phi)\equiv 1$ (where $D\Phi$ denotes the Jacobian matrix of a Hamiltonian diffeomorphism $\Phi$). We show the converse inclusion $\mathcal{L}(\rho_0)\subset\ov{\mathcal{O}(\rho_0)}$ by explicitly constructing an approximating Hamiltonian diffeomorphism. This is done in two steps: first, we approximate a rearrangement of any $\rho_1$ in the right-hand side of \eqref{eq:sub-levels} with a permutation acting on a finite grid of cubes (see Lemma \ref{lem9}); then, we approximate the permutation of cubes with localized Hamiltonian diffeomorphisms (see Lemma \ref{lem10}). The first step of our construction reminds of a result by P.D.~Lax on the approximation of measure-preserving diffeomorphisms by permutations \cite{lax} (for similar constructions, see also the work by Y.~Brenier and W.~Gangbo \cite{brenier}). Note, however, that our result is conceptually reciprocal, meaning that we approximate permutations with Hamiltonian (hence, in particular, measure-preserving) diffeomorphisms.

\medskip

Our study of the approximately attainable Hamiltonian diffeomorphisms, and densities, is based on the identification of some finite families of mechanical Hamiltonians generating the group of Hamiltonian maps. It has been recently proved by Berger and Turaev \cite{berger} that any Hamiltonian diffeomorphism, on the torus or Euclidean space's cotangent bundles, is generated by two abelian subgroups of ${\rm DHam}(T^*M)$, namely horizontal and vertical shears. We thus focus on the approximation of the elements of such subgroups.

However, a major constraint is present: due to the presence of the uncontrolled drift term (the kinetic and potential energy $|p|^2/2+V_0(q)$) in \eqref{eq:mechanical-hamiltonian}, not all Poisson brackets are (a priori) attainable Hamiltonians, but only those corresponding to forward flows along the drift. We can circumvent this difficulty on ${\rm DHam}(T^*\R^d)$, roughly thanks to the existence in this setting of a Hamiltonian function quadratic in $p$ (namely, the harmonic oscillator Hamiltonian $(|p|^2+|q|^2)/2$) whose flow is periodic in the group ${\rm DHam}(T^*\R^d)$, since its period does not depend on the initial configuration $(q,p)$). This property allows to approximate also backward propagation along the drift kinetic energy, and hence all the needed Poisson brackets (for the details, see Proposition \ref{th3}).

On ${\rm DHam}(T^*\T^d)$, the harmonic oscillator Hamiltonian is not globally defined and hence we need to work locally. In particular, in this setting we are able to show that the orbits of the group action of ${\rm DHam}(T^*\T^d)$ on $L^r(T^*\T^d)$-densities are approximately attainable. This is a weaker result than the one showed on ${\rm DHam}(T^*\R^d)$.
An important point here is that, at the level of densities, we can approximate the action of some non-Hamiltonian diffeomorphisms, which are not allowed at the level of the group ${\rm DHam}(T^*\T^d)$ (see Lemma~\ref{lem6}). Such additional non-Hamiltonian movement allows to approximate also backwards propagation along the drift kinetic energy, but only at the level of the densities (hence not at the level of the group ${\rm DHam}(T^*\T^d)$).

\subsection{Some additional literature}
Attainable diffeomorphisms by dynamical systems is 
the subject of a rich literature in various domains. The problem of attainability for volume-preserving diffeomorphisms appears e.g. in fluidodynamics \cite{marsden2,shnirelman}, as well as in shape optimization \cite{trelat-2015}, where they furnish the natural configuration space. More in general, controlling diffeomorphisms is also useful in understanding which output can be approximated in deep learning architectures described by neural ODEs \cite{agrachev-sarychev3,Elamvazhuthi_et_al,zuazua}, since diffeomorphisms act transitivily on any finite set of training data.

This paper fits in a series of previous work on the controllability of diffeomorphisms group, initiated in \cite{agrachev-caponigro} and \cite{trelat-2017} on local exact controllability, and continued in the recent works \cite{agrachev-sarychev2,agrachev-sarychev3} on global approximate controllability, which correspond to the properties studied in Theorems \ref{thm:euclidean} and \ref{thm:torus}.

This work also represents the classical counterpart of the quantum study recently developed in \cite{beauchard-pozzoli2}, concerning attainable diffeomorphisms and Schrödinger equations. The results are similar, but here we are able to study both controllability at the level of the group ${\rm DHam}(T^*M)$ and at the level of the densities in $L^r(T^*M,\R)$, whereas in the quantum study only the controllability at the level of the wave functions in $L^2(M,\C)$ was performed. 

The control problem for Liouville transport equations was previously considered also in \cite{brockett-liouville}, but in a different setting (non-Hamiltonian, and with space-dependent controls).

To the best of our knowledge, this is the first work establishing the approximate controllability of the group of Hamiltonian diffeomorphisms and Liouville equations for mechanical systems (that is, for systems of the form \eqref{eq:hamilton},\eqref{eq:mechanical-hamiltonian} or \eqref{eq:liouville},\eqref{eq:mechanical-hamiltonian}). Furthermore, the results are proved to hold in arbitrarily small times (notice that, due to the presence of a drift, small-time approximate controllability is not equivalent to approximate controllability).

\medskip

Section \ref{sec:2} is dedicated to the proof of Theorem \ref{thm:closure-orbit}. In Section \ref{sec:propertiesVF} we recall some tools on vector fields. In Section \ref{sec:4} we collect some properties of the attainable sets. In Section \ref{sec:5} we introduce two abelian subgroups that generate ${\rm DHam}(T^*M)$ for $M=\R^d,\T^d$. Sections \ref{sec:6} and \ref{sec:7} are dedicated, respectively, to the proofs of Theorems \ref{thm:euclidean} and \ref{thm:torus}. In the last Section \ref{sec:8} we show the small-time \emph{exact} controllability of finite ensembles of points for systems \eqref{eq:euclidean} and \eqref{eq:torus}.

\section{Proof of Theorem~\ref{thm:closure-orbit}}\label{sec:2}

In this section $M$ denotes $\R^d$ or $\T^d$. Given $\rho_0,\rho_1\in L^r(T^*M)$, we shall write $\rho_0\sim \rho_1$ if there exists $\Phi\in {\rm DHam}(V)$ such that $\rho_0=\rho_1\circ \Phi$ (it is an equivalence relation). Let 
$$\mathcal{L}(\rho_0):=\left\{\rho_1\in L^r(T^*M)\mid \forall \mu<\nu, \Vol(\{\mu<\rho_1<\nu\})=\Vol(\{\mu<\rho_0<\nu\})\right\}, $$
and we have to prove that 
\begin{equation}\label{eq:closure-orbit}
\ov{\mathcal{O}(\rho_0)}=\mathcal{L}(\rho_0),\qquad \forall \rho_0\in L^r(T^*M). 
\end{equation}

\subsection{Proof of the inclusion $\ov{\mathcal{O}(\rho_0)}\subset\mathcal{L}(\rho_0)$}
Given $\ro_1 \in \ov{\mathcal{O}(\ro_0)}$, there exists a sequence $(\phi_n)_{n\in \N}$ in $\DHa(V)$ such that $\ro_0 \circ \phi_n \rightarrow \ro_1$ in $L^r$ as $n\to\infty$. Let $\mu < \nu$ be in $\R$. Up to extracting a sub-sequence, $\ro_0 \circ \phi_n \rightarrow \ro_1$ almost everywhere. In particular, for a.e. $x \in \{y \mid \mu < \ro_1(y) < \nu\}$, $\ro_0 \circ \phi_n(x) \rightarrow \ro_1(x)$ as $n\to\infty$. So there exists a zero-measure set $\mathcal{N}\subset V$ such that 
\begin{equation*}
    \{\mu < \ro_1 <\nu\} \subset \cup_{N\in \N}\cap_{n \geq N}\{\mu < \ro_0\circ \phi_n < \nu\}\cup \mathcal{N}.
\end{equation*}
Then, 
\begin{align*}
    \Vol(\{\mu < \ro_1 < \nu\})&\leq \Vol(\cup_{N\in \N}\cap_{n \geq N}\: \{\mu < \ro_0\circ \phi_n < \nu\}) + \Vol(\mathcal{N})\\
    &= 
    \lim_{N \rightarrow\infty}\Vol(\cap_{n \geq N}\: \{\mu < \ro_0\circ \phi_n < \nu\}) \\
    &
    \leq \limsup_{N \rightarrow\infty}\Vol(\{\mu < \ro_0\circ \phi_N < \nu\})\\
    &= \Vol(\{\mu < \ro_0 < \nu\}).
\end{align*}
Conversely, since a Hamiltonian change of variables preserves the $L^r$-norm, we also have that $\ro_1 \circ \phi_n^{-1}\rightarrow \ro_0$ in $L^r$ as $n\to\infty$. Hence, 
the previous reasoning also shows that $\Vol(\{\mu<\ro_0< \nu\})\leq\Vol(\{\mu < \ro_1 < \nu\})$. 
This implies that $\Vol(\{\mu <\ro_0< \nu\})=\Vol(\{\mu < \ro_1 < \nu\})$.
\hfill$\square$

\begin{rem}
The proof of the inclusion $\ov{\mathcal{O}(\rho_0)}\subset\mathcal{L}(\rho_0)$ given in this section works for any choice of the manifold $M$, and not only for $M=\R^d$ and $M=\T^d$.
\end{rem}

\subsection{Proof of the inclusion $\mathcal{L}(\rho_0)\subset\ov{\mathcal{O}(\rho_0)}$}
\subsubsection{Approximation with a permutation of the mesh}
\begin{lemma} \label{lem8}
    Let $\ro_0,\ro_1\in L^r(T^*M)$ be such that $\ro_0 \sim \ro_1$. For every $\mu < \nu$  in $\R$ with $\mu \ne0$, we have
    \begin{equation*}
    \Vol(\{\mu \leq \ro_0 < \nu\})=\Vol(\{\mu \leq \ro_1 < \nu\}).
\end{equation*}
\end{lemma}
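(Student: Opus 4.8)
The plan is to exploit that a Hamiltonian diffeomorphism preserves the volume form. Since $\rho_0\sim\rho_1$, fix $\Phi\in\DHa(T^*M)$ with $\rho_0=\rho_1\circ\Phi$ in $L^r$, i.e. $\rho_0(x)=\rho_1(\Phi(x))$ for a.e. $x\in T^*M$. Being Hamiltonian, $\Phi$ is a smooth diffeomorphism with $\det D\Phi\equiv 1$, so by the change of variables formula $\Vol(\Phi^{-1}(A))=\Vol(A)$ for every measurable $A\subset T^*M$ (including sets of infinite measure: decompose $A$ as an increasing union of finite-measure pieces and pass to the limit); in particular $\Phi$ maps null sets to null sets. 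Hence, up to the null set $\{\rho_0\neq\rho_1\circ\Phi\}$, one has $\{\mu\leq\rho_0<\nu\}=\Phi^{-1}(\{\mu\leq\rho_1<\nu\})$, and combining the two facts yields $\Vol(\{\mu\leq\rho_0<\nu\})=\Vol(\{\mu\leq\rho_1<\nu\})$.

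Since this short argument does not actually use $\mu\neq 0$, an alternative I would keep in mind — and one that does use the hypothesis — is to deduce the claim from the inclusion $\ov{\mathcal{O}(\rho_0)}\subset\mathcal{L}(\rho_0)$ proved just above: as $\rho_1=\rho_0\circ\Phi^{-1}\in\mathcal{O}(\rho_0)\subset\mathcal{L}(\rho_0)$, we get $\Vol(\{\mu<\rho_0<\nu\})=\Vol(\{\mu<\rho_1<\nu\})$ for all real $\mu<\nu$; it then remains to upgrade the open interval to $[\mu,\nu)$, for which it suffices to check $\Vol(\{\rho_0=\mu\})=\Vol(\{\rho_1=\mu\})$. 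Writing $\{\rho_0=\mu\}=\bigcap_{n\geq1}\{\mu-\tfrac1n<\rho_0<\mu+\tfrac1n\}$ and noting that, for $\mu\neq0$ and $n$ large, Markov's inequality gives $\Vol(\{\mu-\tfrac1n<\rho_0<\mu+\tfrac1n\})\leq\Vol(\{|\rho_0|>|\mu|-\tfrac1n\})<\infty$ because $\rho_0\in L^r$ with $r<\infty$, continuity of the measure from above applies; the same holds for $\rho_1$, and term-by-term equality of these two decreasing sequences gives $\Vol(\{\rho_0=\mu\})=\Vol(\{\rho_1=\mu\})<\infty$. Then $\Vol(\{\mu\leq\rho_0<\nu\})=\Vol(\{\rho_0=\mu\})+\Vol(\{\mu<\rho_0<\nu\})=\Vol(\{\rho_1=\mu\})+\Vol(\{\mu<\rho_1<\nu\})=\Vol(\{\mu\leq\rho_1<\nu\})$, the additivity being valid in $[0,+\infty]$ regardless of whether the middle term is finite.

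I do not expect a serious obstacle here: the only delicate point is ensuring that the relevant level sets and thin shells $\{\mu-\tfrac1n<\rho_0<\mu+\tfrac1n\}$ have finite measure, so that one may take limits of, or subtract, their volumes — and this is exactly what $\mu\neq0$ together with $r<\infty$ provide, via Markov's inequality. In the first, direct argument this issue is sidestepped entirely, since no limiting procedure is involved; I would present that one as the proof and simply record that the hypothesis $\mu\neq0$ is not truly needed, or keep it to match the way the lemma is invoked later.
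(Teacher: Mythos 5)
Your proposal is correct, and your first (direct) argument is a genuinely different route from the paper's. The paper does not invoke the Hamiltonian diffeomorphism $\Phi$ at all in the proof of this lemma: it observes that it suffices to prove $\Vol(\{\rho_0=\mu\})=\Vol(\{\rho_1=\mu\})$ for $\mu\neq 0$, writes $\{\rho_i=\mu\}=\bigcap_{n}\{\mu-\tfrac1n<\rho_i<\mu+\tfrac1n\}$, applies continuity of the measure from above once $n$ is large enough that the shell has finite volume (this is exactly where $\mu\neq 0$ and $\rho_i\in L^r$ enter, via a Chebyshev/Markov bound), and then uses the equality of open-shell volumes coming from the already-proved inclusion $\ov{\mathcal{O}(\rho_0)}\subset\mathcal{L}(\rho_0)$. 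Your second alternative reproduces precisely this argument. Your first argument is shorter and strictly stronger: choosing a single $\Phi\in\DHa(T^*M)$ with $\rho_0=\rho_1\circ\Phi$ a.e.\ and using $\det D\Phi\equiv 1$ gives, by change of variables, $\Vol(\rho_0^{-1}(B))=\Vol(\rho_1^{-1}(B))$ for \emph{every} Borel set $B\subset\R$, with no restriction on $\mu$ and no passage to a limit of possibly infinite volumes; you are right that it makes the hypothesis $\mu\neq 0$ superfluous. The trade-off is that the direct argument genuinely uses $\rho_0\sim\rho_1$ (existence of an actual measure-preserving $\Phi$), whereas the paper's argument only uses the weaker consequence $\rho_1\in\mathcal{L}(\rho_0)$, i.e.\ the matching of open sub-level volumes; keeping the lemma in the paper's form makes the exact role of $\mu\neq 0$ transparent for the later use in Lemma~\ref{lem9}, but mathematically your version is cleaner.
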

\begin{proof}
It is sufficient to prove that $\Vol(\{\ro_0=\mu\})=\Vol(\{\ro_1=\mu\})$ for every $\mu \ne 0$. The densities $\rho_0,\rho_1$ are in $L^r$, 
so if $\mu \neq 0$, there exists $n_0 \in \N^*$ such that $\Vol(\{\mu-\frac{1}{n_0}<\ro_i<\mu +\frac{1}{n_0}\})< +\infty$, for $i\in \{0,1\}$, and then 
\begin{align*}
    \Vol(\{\ro_1=\mu\}) &= \Vol\left(\cap_{n\in \N^*}\{\mu - \frac{1}{n}< \ro_1 < \mu + \frac{1}{n}\}\right) \\
    &= \lim_{n \rightarrow\infty}\Vol\left(\left\{\mu- \frac{1}{n}< \ro_1 < \mu + \frac{1}{n}\right\}\right) \\
    &= \lim_{n \rightarrow\infty}\Vol\left(\left\{\mu- \frac{1}{n}< \ro_0 < \mu + \frac{1}{n}\right\}\right) \\
    &= \Vol(\{\ro_0=\mu\}). 
\end{align*}
\end{proof}
In the following we introduce a regular mesh of $T^*M$, i.e., we cover $T^*M$ by a set of distinct cubes of volume $h^{2d}, h>0$. In order to work with cubes we use the norm 
$$\|x\| = \max_{i \in \llbracket 1,2d \rrbracket} |x^i|, \qquad  x \in T^*M,$$
where the notation $\llbracket 1,2d \rrbracket$ stands for $\left\{1,\dots,2d\right\}$. 
For $x \in T^*M$ and $h>0$, the open and the closed cubes of center $x$ and radius $h$ are given by 
$$C(x,h)=\{y \in T^*M \mid \|x-y\| < h\},\qquad \ov{C}(x,h)=\{y \in T^*M \mid \|x-y\| \le h\}.$$
We denote by $M_h=\left\{m_n \mid n \in \N\right\}$ a regular grid of points in $V$ such that 
$$T^*M = \cup_{n \in \N}\ov{C}(m_n,h),$$  
while the open cubes $C(m_n,h)$, $n\in\N$, are pairwise disjoint. We call such a $M_h$ a \emph{mesh of size $h$}.

\begin{definition}
Given $h>0$
and a mesh $M_h$ of size $h$, a map $F:T^*M\to T^*M$ is said to be a \emph{permutation of $M_h$} if $F$ is bijective and $F$ translates every open cube of the mesh to another one, that is, 
for every $n \in \N$ there exists 
$\ell \in \N$ such that, for every $x \in C(m_n,h)$, $F(x)=x+m_{\ell}-m_n$. 
\end{definition}
\begin{lemma} \label{lem9}
    Let $\ro_0,\ro_1 \in L^r(T^*M)$ 
    be such that $\ro_0 \sim \ro_1$. For every $\e >0$, there exists $h_0 > 0$ such that for every $h \in (0,h_0)$ and every mesh $M_h$ of size $h$, 
    there exists a permutation $F_{\e,h}$ of $M_h$  such that 
    $$\|\ro_0 \circ F_{\e,h} - \ro_1\|_{L^r}< \e.$$ 
\end{lemma}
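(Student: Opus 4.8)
The plan is to reduce the approximation of $\rho_1$ by $\rho_0\circ F_{\e,h}$ to a finite combinatorial matching problem between cubes of the mesh, using Lemma~\ref{lem8} as the key rigidity input. First I would truncate and discretize: fix a large radius $R$ so that the tails of $\rho_0$ and $\rho_1$ outside the ball of radius $R$ (in the case $M=\R^d$; on $\T^d$ the position variable is already compact but the momentum variable is not, so the truncation is in $p$) contribute less than $\e^r/C$ to the relevant $L^r$-norms, and fix a finite partition of the essential range of $\rho_0$ (equivalently $\rho_1$, since they have the same sub-/super-level set measures) into finitely many half-open intervals $[\mu_k,\mu_{k+1})$ of width $<\delta$, chosen so that no endpoint $\mu_k$ equals $0$ — this is exactly where Lemma~\ref{lem8} is needed. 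Replacing $\rho_i$ by the simple function $\tilde\rho_i$ taking the value $\mu_k$ on $\{\mu_k\le\rho_i<\mu_{k+1}\}$ costs less than $\e/4$ in $L^r$ for $\delta$ small, and by Lemma~\ref{lem8} the level set $E^i_k:=\{\mu_k\le\rho_i<\mu_{k+1}\}\cap B_R$ satisfies $\Vol(E^0_k)=\Vol(E^1_k)$ up to the $\e$-small tail error.

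Next I would bring in the mesh. For $h$ small, by the Lebesgue density theorem / regularity of Lebesgue measure, each set $E^i_k$ is well approximated from inside and outside by unions of closed cubes of the mesh $M_h$: more precisely, for all sufficiently small $h$ the number of mesh cubes $\ov C(m_n,h)$ lying entirely inside $E^i_k$, times $h^{2d}$, differs from $\Vol(E^i_k)$ by at most $\e^r/(C N_{\text{lev}})$ where $N_{\text{lev}}$ is the (fixed) number of levels, and similarly the cubes meeting the boundary $\partial E^i_k$ have total volume $<\e^r/(C N_{\text{lev}})$. This should be stated as a sublemma and proved by covering the boundary, which has measure zero for the relevant $\mu_k$ (again using $\mu_k\ne 0$ and that the distribution function is continuous off the atoms), with a shrinking neighborhood. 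Let $\mathcal A^i_k$ denote the collection of mesh cubes wholly contained in $E^i_k$ (and inside $B_R$); then $\#\mathcal A^0_k$ and $\#\mathcal A^1_k$ agree up to an error that is $o(h^{-2d})$ and in fact, after possibly discarding a few cubes into a "garbage" bin, we may assume $\#\mathcal A^0_k=\#\mathcal A^1_k=:N_k$ for each $k$, with the discarded cubes having total volume $<\e^r/C$.

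Now the combinatorics: define $F_{\e,h}$ by choosing, for each level $k$, an arbitrary bijection between the $N_k$ cubes of $\mathcal A^0_k$ and the $N_k$ cubes of $\mathcal A^1_k$, realized as a translation on each open cube (a cube centered at $m_n$ maps to the matched cube centered at $m_\ell$ via $x\mapsto x+m_\ell-m_n$); on all the leftover cubes (the boundary cubes, the discarded cubes, the cubes outside $B_R$) let $F_{\e,h}$ act by any fixed bijection among themselves that is again cube-translation — e.g. the identity on those that are unmatched — so that the result is a genuine permutation of $M_h$ in the sense of the definition. On the full-measure union of open cubes, $F_{\e,h}$ is well defined and bijective; one extends it to the closed-cube boundaries by any measurable choice (the boundary set has measure zero so it is irrelevant for the $L^r$ estimate). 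By construction, on each open cube of $\mathcal A^1_k$ the function $\rho_0\circ F_{\e,h}$ equals $\rho_0$ restricted to a cube of $\mathcal A^0_k$, where $\rho_0\in[\mu_k,\mu_{k+1})$, hence $|\rho_0\circ F_{\e,h}-\tilde\rho_1|<\delta$ there; on the union of all leftover cubes, which has total volume $<C'\e^r$, we bound $|\rho_0\circ F_{\e,h}-\rho_1|$ crudely using that both $\rho_0\circ F_{\e,h}$ and $\rho_1$ are controlled in $L^r$ (the contribution is $<\e/4$ by the choice of $R$ and the smallness of the garbage volume, after one more application of absolute continuity of the integral of $|\rho_0|^r$ and $|\rho_1|^r$). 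Summing, $\|\rho_0\circ F_{\e,h}-\rho_1\|_{L^r}\le\|\rho_0\circ F_{\e,h}-\tilde\rho_1\|_{L^r}+\|\tilde\rho_1-\rho_1\|_{L^r}<\e$ for $h<h_0$.

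The main obstacle I expect is the bookkeeping in the second step: matching the cube counts $\#\mathcal A^0_k$ and $\#\mathcal A^1_k$ exactly. The equality $\Vol(E^0_k)=\Vol(E^1_k)$ only gives $\#\mathcal A^0_k h^{2d}\approx \Vol(E^i_k)\approx \#\mathcal A^1_k h^{2d}$ with errors of order the boundary-layer volume, so the raw counts may differ; one must absorb the discrepancy into the garbage bin and check that the total garbage volume stays $o(1)$ as $h\to 0$, uniformly over the (finitely many) levels, which requires the boundary sets $\partial E^i_k$ to be Lebesgue-null — guaranteed precisely because $\mu_k\ne 0$ and $\{\rho_i=\mu_k\}$ has the same measure for $i=0,1$ by Lemma~\ref{lem8}, combined with the fact that a sub-/super-level set boundary $\{\rho_i=\mu_k\}$ has measure zero once it is not an atom of the distribution, which one arranges by choosing the $\mu_k$ outside the at-most-countable atom set. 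A secondary technical point is that the statement must hold for \emph{every} mesh of size $h$, not a cleverly chosen one, so the approximation estimates above must be made uniform over translates of the grid — this follows from the fact that the boundary-layer volume bound depends only on $h$ and the measure-zero boundary sets, not on the grid's offset, together with an integrated/averaging argument if needed.
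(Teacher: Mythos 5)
Your overall strategy matches the paper's: truncate, replace $\rho_i$ by a finite-valued step function, use Lemma~\ref{lem8} to equate the measures of corresponding level sets, cover each level set by mesh cubes and equalize cube counts with a garbage bin, build the permutation, and estimate the $L^r$ error. The decomposition and the role of the permutation are the same.

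However, the key technical step in your sketch has a genuine gap, and it is exactly where you flagged a difficulty. You approximate each level set $E^i_k=\{\mu_k\le\rho_i<\mu_{k+1}\}$ by the collection of mesh cubes ``lying entirely inside $E^i_k$,'' and you bound the error by the volume of cubes meeting $\partial E^i_k$, which you claim is Lebesgue-null. Neither of these holds for a general measurable set. A measurable set $E$ of positive measure need not contain any cube at all (it can have empty interior), so the collection of cubes inside $E^i_k$ can be empty for every $h$. And the topological boundary $\partial E^i_k$ of a measurable set is not the level set $\{\rho_i=\mu_k\}\cup\{\rho_i=\mu_{k+1}\}$; it can have positive measure (think of a fat-Cantor-type set), so ``$\mu_k$ is not an atom'' does not make $\partial E^i_k$ null. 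Invoking the Lebesgue density theorem is the right instinct, but it gives a pointwise statement about the fraction of a shrinking cube covered by $E^i_k$; turning that into a count of cubes contained in $E^i_k$ is precisely what fails.

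The paper's third step sidesteps this by never looking for cubes inside $V^i$. It uses outer regularity: pick an open set $W^i\supset V^i$ with $\Vol(W^i)<\Vol(V^i)+\e$, shrink it to $W^i_{2h}$, cover $W^i_{2h}$ by mesh cubes (these are automatically inside the open set $W^i$, which does have nonempty interior), and then pass from ``cubes inside $W^i$'' to ``cubes whose intersection with $V^i$ exceeds half their volume'' (the index set $\hat J_i$), with the discrepancy controlled by $\Vol(W^i\setminus V^i)<\e$. This is the device you are missing: replace containment in $V^i$ by majority overlap with $V^i$, measured against an open outer approximation. Once you adopt that, your garbage-bin bookkeeping and the final $L^r$ estimate go through essentially as you describe. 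A secondary, more easily fixable point: truncating spatially to a ball $B_R$ does not bound the values of $\rho_i$, so ``a finite partition of the essential range into intervals of width $<\delta$'' is not yet well posed; you should also truncate in the range as the paper does, keeping only $\{a<|\rho_i|<A\}$, which simultaneously makes each nonzero level set have finite measure and makes the range compact.
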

\begin{proof}
\underline{First step:} 
Let 
$0 < a < A$. For $i \in \{0,1\}$, by dominated convergence we have 
$$\|\ro_i \1_{\{a < |\ro_i|< A\}} - \ro_i\|_{L^r} \underset{a \rightarrow 0,A \rightarrow + \infty}{\longrightarrow}0.$$
Moreover we still have $$\ro_0\1_{\{a < |\ro_0|< A\}}\sim \ro_1 \1_{\{a < |\ro_1|< A\}},$$
so in the following we can consider without loss of generality that there exist $0< a < A$ and a set $W \subset T^*M$ of finite measure, such that
$a< |\ro_0(x)|,| \ro_1(x)|< A$
for $x \in W$ and $\ro_0(x)=\ro_1(x)=0$
for $x\in T^*M \setminus W$. 
\\ \\
\underline{Second step:} Let us approximate the densities $\rho_0$ and $\rho_1$ by step functions. 
Given $N\in \N$ we set $\xi_0=-A$ and $\xi_{k}=\xi_{k-1} + \frac{2A}{N}$ for every $k \in \llbracket 1,N \rrbracket$. For $i \in \{0,1\}$, let 
$$I_N^i=\sum_{k=1}^{N}\xi_k\1_{\left\{\xi_{k-1} \leq \ro_i < \xi_{k}\right\}}, $$
and thanks to 
the previous step, we deduce that $I_N^i \underset{N \rightarrow\infty}{\longrightarrow}\ro_i$ $\text{in }L^r$.
Notice that for $N$ large enough, by the previous step, either  $\xi_{k-1}\ne 0$ (and $\Vol(\{\xi_{k-1} \leq \ro_i < \xi_{k}\})<+\infty$) or $\rho_i\equiv 0$ on $\{\xi_{k-1} \leq \ro_i < \xi_{k}\}$. 
\\ \\
\underline{Third step:} Let us prove now that the sub-level sets of $\rho_0$ and $\rho_1$ can be approximately covered by the same number of cubes. 
Consider $i\in \{0,1\}$, $\xi< \xi'$ in $\R$, 
 set 
$$V^i:=\left\{x \in T^*M \mid \xi \leq \ro_i(x) < \xi'\right\},$$
and assume that $\Vol(V^i)<+\infty$. \\
Let $W^i\subset T^*M$ be open and such that $V^i\subset W^i$, $\Vol(W^i)<\Vol(V^i)+\e$, where $\e>0$ is arbitrary.  
Set, moreover, for $\n >0$, 
$$W_{\n}^i:=\left\{x  \in W^i\mid d(x,\p W^i) > \n\right\},$$
where $d(x,\p W^i)=\inf_{y \in \p W^i}\|x-y\|$. 
By dominated convergence, $\1_{W^i_{\n}}\rightarrow\1_{W^i}$ in $L^r$ as $\n$ goes to zero.
%
In particular, $0< \Vol(W^i)-\Vol(W^i_{\n})<\e$, for $\n$ small enough, and 
$$\Vol(W^i_\n \Delta V^i)= \Vol(W^i_\n \setminus V^i) + \Vol (V^i \setminus W^i_\n)\leq \Vol(W^i \setminus V^i)+ \Vol(W^i \setminus W^i_\n) <2\e,$$
where we used $\Delta$ to denote the symmetric difference.
 In particular, $|\Vol(W^i_\n) - \Vol(V^i)|\leq \Vol(W^i_\n \Delta V^i)<2\e$.
Given $h>0$ and a mesh $M_h=\left\{m_n\mid n\in \N\right\}$ of size $h$, 
consider the minimal set $J_i\subset \N$ 
such that $W^i_{2h}\subset \cup_{n\in J_i}\ov{C}(m_n,h)$. 
Notice that 
if
$n \in J_i$ 
then $\ov{C}(m_n,h)\subset W^i$. 
Hence, for $h$ small enough,  
\begin{equation*}
   \Vol(W^i)-\e < \Vol(W^i_{2h})\leq \Vol(\cup_{n\in J_i}C
    (m_n,h))\leq \Vol(W^i)
\end{equation*}
from which we deduce that
\begin{equation}\label{eq:squizz}
\Vol(V^i\Delta (\cup_{n\in J_i}C
    (m_n,h)))\leq \Vol(W^i \setminus V^i)+ \Vol(W^i \setminus\cup_{n\in J_i}C
    (m_n,h)))<2\e.
\end{equation}
Now, denote by $\hat{J}_i$ the set of indices $n\in J_i$ such that 
$\Vol(C(m_n,h)\cap V_i)>\frac{1}{2}\Vol(C(m_n,h))$, so that 
\begin{equation}\label{eq:squizz2}
\e>\Vol(W^i\setminus V^i)\ge \sum_{n\in J_i\setminus \hat J_i}\Vol(C(m_n,h)\setminus V^i)\ge \frac12\Vol\left(\cup_{n\in J_i\setminus \hat{J}_i}C(m_n,h)\right).
\end{equation}
According to \eqref{eq:squizz} and \eqref{eq:squizz2}, we then have
\[\Vol(V^i)+\e>\Vol(\cup_{n\in\hat J_i}C(m_n,h))>\Vol(\cup_{n\in J_i}C(m_n,h))-2\e>\Vol(V^i)-4\e.
\]
Hence $|\Vol(\cup_{n \in \hat J_i}{C}(m_n,h))-\Vol(V^i)|<4\e$. In particular,   since $\Vol(V^0)=\Vol(V^1)<+\infty$ and
denoting by $\sharp \hat J_i$ the cardinal of $\hat J_i$,
it follows that $h^{2d}|\sharp \hat J_0 - \sharp \hat J_1|<8\e$. \\
Let us prove that we can approximate $V^0$ and $V^1$ by the same number of cubes. 
Pick 
$\Tilde{J}_1\subset \hat J_1$, $\Tilde{J}_2\subset \hat J_2$ such that $\sharp \Tilde{J}_1=\sharp \Tilde{J}_2=\min(\sharp \hat J_1,\sharp \hat J_2)$. Then, for $i\in \{0,1\}$,
\[
|\Vol(V^i)-\Vol(\cup_{n \in  \tilde J_i}C(m_n,h))|
\le |\Vol(V^i)-\Vol(\cup_{n \in  \hat J_i}C(m_n,h))|+(2h)^{2d}|\sharp \hat J_0 - \sharp \hat J_1|
<12\e.\]
\\ \\
\underline{Fourth step:} We fix $\e>0$, $N\in \N$ such that $\|I^i_N-\rho_i\|_{L^r}<\e$ for $i\in\{0,1\}$, and we apply the previous step to the each of the sub-level sets 
\[V_k^i=\{\xi_{k-1} \leq \ro_i < \xi_{k}\},\qquad i\in\{0,1\},\ k \in \llbracket 1,N \rrbracket,\]
with $0\not\in [\xi_{k-1},\xi_k)$. We recall that by construction, $\Vol(V^i_k)<+\infty$ for $0\not\in [\xi_{k-1},\xi_k)$, so we are indeed allowed to apply the third step. \\
Then, given $\n=\n(\e)>0$ to be fixed later, there exists $h_0>0$ such that for every $h\in (0,h_0)$, every mesh $M_h$ of size $h$, 
and every $k \in \llbracket 1,N \rrbracket$ with $0\not\in [\xi_{k-1},\xi_k)$, there exist $J_k^0,J_k^1 \subset \N$  such that $\sharp J_k^0 = \sharp J_k^1$ and 
\[|\Vol(V_k^i)- \Vol(\cup_{n \in  J_k^i}C(m_n,h))|< \n, \qquad i \in \{0,1\}.\]
Moreover,
the cubes indexed by $J_k^i$
intersect $V_k^i$ for more than half their volume, so, for $i$ fixed, the sets $J_k^i$ 
are disjoint. We can construct a permutation $F_{\e,h}$ of the mesh $M_h$ that translates the cubes of $J_k^0$ to the cubes of $J_k^1$.
Then, since  $F_{\e,h}$ preserves the volume, 
\begin{align*}
    \int_{T^*M}|\1_{V^0_k}\circ F_{\e,h}-\1_{V^1_k}|
    &= \Vol(V^1_k \setminus F_{\e,h}(V^0_k))+ \Vol(F_{\e,h}(V^0_k)\setminus V^1_k) \\
    &= \Vol(V^1_k \setminus F_{\e,h}(V^0_k))+ \Vol(V^0_k\setminus F_{\e,h}^{-1}(V^1_k)) \\
    &\leq \Vol(V^1_k \setminus \cup_{n\in J_k^1}C(m_n,h)) + \Vol(V^0_k \setminus \cup_{n \in J^0_k}C(m_n,h))
    < 2\n.
\end{align*}
Therefore,
\begin{align*}
    \int_{T^*M} |I^0_N \circ F_{\e,h}-I^1_N|^r&\leq \sum_{\footnotesize\begin{array}{c}k\in\llbracket 1,N\rrbracket\\0\not\in [\xi_{k-1},\xi_k)\end{array}}\int_{T^*M}A^r|\1_{V^0_k}\circ F_{\e,h}-\1_{V^1_k}| < 2 N A^r\n,
\end{align*}
so that 
\[\|I_N^0\circ F_{\e,h}-I^1_N\|_{L^r}< A(2N\n)^{1/r}.\]
In conclusion, taking $\n = \frac{1}{2N}(\frac{\e}{A})^{r}$,
\begin{align*}
    \|\ro_0 \circ F_{\e,h}-\ro_1\|_{L^r} &\leq \|\ro_0 \circ F_{\e,h}-I^0_N \circ F_{\e,h}\|_{L^r}+\|I_N^0 \circ F_{\e,h}-I_N^1\|_{L^r}+\|I_N^1 - \ro_1\|_{L^r}\\
    &= \|\ro_0 -I^0_N \|_{L^r}+\|I_N^0 \circ F_{\e,h}-I_N^1\|_{L^r}+\|I_N^1 - \ro_1\|_{L^r}< 3\e.
\end{align*}
\end{proof}
\subsubsection{Approximation by a Hamiltonian diffeomorphism}
The following lemma guarantees that the previously introduced permutation of the mesh can be approximated arbitrarily well by a Hamiltonian diffeomorphism. 
\begin{lemma} \label{lem10}
    Given $\ro \in L^r(T^*M)$ and $\e >0$, there exists $h_0 >0$ such that for every $h \in (0,h_0)$, every mesh $M_h$ of size $h$, and every permutation  $F_{h}$ of $M_h$, there exists $\phi \in \DHa(T^*M)$ such that 
    $$\|\ro\circ F_h - \ro\circ \phi\|_{L^r} < \e.$$
\end{lemma}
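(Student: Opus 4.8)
The plan is to build the Hamiltonian diffeomorphism $\phi$ one cube-swap at a time: it suffices to approximate, in the $L^r$-sense relevant for $\rho$, any single transposition of two disjoint open cubes of the mesh, and then compose finitely many such approximations (on the scale of a fixed mesh only finitely many cubes carry mass of $I^0_N$, so any relevant permutation is a finite product of transpositions). Fix two cubes $C(m_n,h)$ and $C(m_\ell,h)$ with centers $a=m_n$, $b=m_\ell$. First I would reduce to the case where $a$ and $b$ differ in a single coordinate: a translation in $\mathbb{R}^{2d}$ (or $T^*M$) between adjacent cubes can be realized exactly by a compactly supported Hamiltonian — e.g. translation in the $q_1$-direction by the time-$t$ flow of a function of $p_1$ cut off to be constant on the relevant region, and translation in the $p_1$-direction by the flow of a function of $q_1$, both of which lie in $\mathrm{DHam}(T^*M)$ — and a general translation is a composition of coordinate translations. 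Iterating adjacent transpositions reduces the problem to swapping two \emph{adjacent} cubes, which is the genuine geometric core.

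For the adjacent swap, the idea is that a translation is not symplectic as a self-map of the union of the two cubes, but a $180^\circ$ rotation of the ``double cube'' $R=C(a,h)\cup C(b,h)$ about its center \emph{is} area/volume preserving and exchanges the two halves; moreover in the plane spanned by the direction joining $a,b$ and (say) a momentum coordinate, such a rotation is the time-$\pi$ flow of a harmonic-oscillator-type Hamiltonian $\tfrac12(x_j^2+x_{j+d}^2)$ (suitably translated and rescaled), hence Hamiltonian. I would therefore take $\phi$ to be the time-$1$ flow of a Hamiltonian $K$ that, on a slightly shrunk double cube, equals this rotation-generating quadratic, and which is cut off smoothly to zero outside a slightly enlarged double cube; such a $K\in\smooth_c(T^*M)$ exists because rotation fixes the center and the flow stays within the enlarged cube if the cutoff collar is thin. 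The resulting $\phi$ agrees exactly with the mesh translation $F_h$ on the shrunk cubes and differs from it only on a collar of volume $O(h^{2d-1}\delta)$ for a collar width $\delta$, which we send to $0$.

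The remaining point is to turn ``$\phi=F_h$ off a small collar'' into the $L^r$ estimate $\|\rho\circ F_h-\rho\circ\phi\|_{L^r}<\e$, and this — plus the passage from a single transposition to the whole permutation uniformly in the mesh — is where I expect the main work, and the main obstacle, to lie. The difficulty is that $\rho$ is merely $L^r$, so $\rho\circ F_h$ and $\rho\circ\phi$ need not be close just because $F_h$ and $\phi$ are close as maps; instead I would argue by density of $\smooth_c(T^*M)$ in $L^r$, fixing $\tilde\rho$ smooth with $\|\rho-\tilde\rho\|_{L^r}<\e/3$, using that $F_h$ and $\phi$ are volume-preserving so that $\|\rho\circ F_h-\tilde\rho\circ F_h\|_{L^r}=\|\rho-\tilde\rho\|_{L^r}$ and likewise for $\phi$, and then bounding $\|\tilde\rho\circ F_h-\tilde\rho\circ\phi\|_{L^r}$ by $2\|\tilde\rho\|_\infty^{r}$ times the volume of the ``bad set'' where $F_h\neq\phi$ (contained in the union of collars), all the while choosing the number of cubes, hence the number of transpositions, under control: since only $O(\Vol(W)/h^{2d})$ cubes are relevant, the collars total volume $O(\Vol(W)\,\delta/h)$, so fixing $h$ first and then $\delta$ small enough (depending on $h$, $\tilde\rho$, $\e$) closes the estimate. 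A technical subtlety to handle carefully is that $\tilde\rho$ depends on $\e$ but not on $h$, so one must order the quantifiers as in the statement ($\e\rightsquigarrow h_0$) by first selecting $\tilde\rho$, then letting $h_0$ be small enough that the above volume bounds hold for all $h<h_0$ and all meshes of that size, with the collar width chosen as a fixed fraction of $h$.
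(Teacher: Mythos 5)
Your proposal is close to the paper's proof in its overall plan (smooth cutoff, mesh step-function approximation, Hamiltonian rotations, and the concluding $L^r$ estimate via the volume-preserving isometry), but the core geometric step — the localized $180^\circ$ rotation of the double cube — contains a genuine gap: it is not true that ``the flow stays within the enlarged cube if the cutoff collar is thin.'' In the relevant $2$-plane, write the double cube as a rectangle $[-A,A]\times[-B,B]$. The trajectories of any rotation-generating quadratic Hamiltonian are the concentric ellipses $q^{2}+\lambda^{2}p^{2}=c$ (circles if $\lambda=1$), and the trajectory through a corner $(A,B)$ has $q$-excursion $\sqrt{A^{2}+\lambda^{2}B^{2}}>A$ and $p$-excursion $\sqrt{A^{2}/\lambda^{2}+B^{2}}>B$, for every aspect ratio $\lambda>0$. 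So the swept region \emph{always} strictly exceeds the double cube in both directions, and any smooth cutoff Hamiltonian equal to the rotation generator where the trajectories go must overlap neighboring cubes of the mesh — exactly the cubes you need to leave fixed. The ``bad set'' where $\phi\neq F_h$ is then not a thin collar, and the estimate does not close. The same obstruction undermines your preliminary reduction to adjacent transpositions: the flow of a compactly supported Hamiltonian that agrees with a function of $p_1$ on the relevant $p$-range is a global shear in $q_1$ there, not a translation localized to one cube, and localizing in $q$ by cutting off reintroduces spill-over.

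The paper gets around this by inserting a \emph{separation} step before any rotation. First a column-wise vertical shear $e^{\f{f}}$ with $f\in\smooth(\R^d_q,\R)$ (locally linear in $q$ near each column of cubes) translates the finitely many relevant cubes so that cubes sharing a $q$-column end up arbitrarily far apart in $p$; then a horizontal shear $e^{\f{g}}$ with $g\in\smooth(\R^d_p,\R)$ (locally linear in $p$ near each now-isolated cube) aligns the $q$-components. Only \emph{after} this separation is the anisotropic oscillator $h_w=\tfrac{|p-\tilde p|^{2}}{2}+\tfrac{|q-\tilde q|^{2}}{2w^{2}}$ used to swap two cubes in a column: for $w$ small its elliptical trajectories stay inside a thin vertical strip $|q-\tilde q|<h$, while extending very far in $p$ — and the prior separation guarantees that this long $p$-excursion meets no other cube, so $h_w$ can be cut off without disturbing anything else. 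Without some mechanism of this kind (separating cubes before rotating), the cut-off rotation idea cannot be confined and your argument has a missing step.
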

\begin{proof}
 \underline{First step:} Since the set of smooth functions with compact support  $\mathcal{C}^{\infty}_c(T^*M)$ is dense in $L^r(T^*M) $, there exists a smooth function $\Tilde{\ro}$ with compact support such that $\|\Tilde{\ro}-\ro\|_{L^r} < \e$. 
So in the following we assume that $\ro \in \mathcal{C}^{\infty}_c(T^*M)$ and we denote its compact support by $K \subset V$. 
\\ \\
\underline{Second step:} 
By compactness, for every $h>0$ and every mesh $M_h$, $K$ can be covered by a finite number of cubes of $M_h$, that is, there exists $N \in \N$ (depending on $h$ and $M_h$) such that $K \subset \cup_{n=0}^N\ov{C}(m_n,h)$. 
Since $\ro$ is continuous over the compact $K$, then it is uniformly continuous. So 
for every $h>0$ small enough there exists $\eta>0$ such that for every mesh $M_h$ of size $h$, 
$$\left\|\ro - \sum_{n=0}^N\ro(m_n)\1_{C(m_n,h-\eta)}\right\|_{L_r} < \e.$$
\\ \\
\underline{Third step:} 
Given $h>0$, a mesh $M_h$ of size $h$, and a permutation $F_h$ of $M_h$,  
we 
look for a Hamiltonian diffeomorphism $\phi$ such that $\1_{C(m_n,h-\eta)}\circ \phi$ approximates $\1_{C(m_n,h-\eta)}\circ F_h$ in $L^r(T^*M)$ for every $n \in \llbracket 0, N \rrbracket$.
The image of $m_n=(q^n,p^n)$ by $F_h$ is denoted by $\bar{m}_n=(\bar{q}^n,\bar{p}^n)$, so that  $F_h(C(m_n,h))=C(\bar{m}_n,h)$, see \cref{Permutation 1}. 
\begin{center}
\includegraphics[scale=0.35]{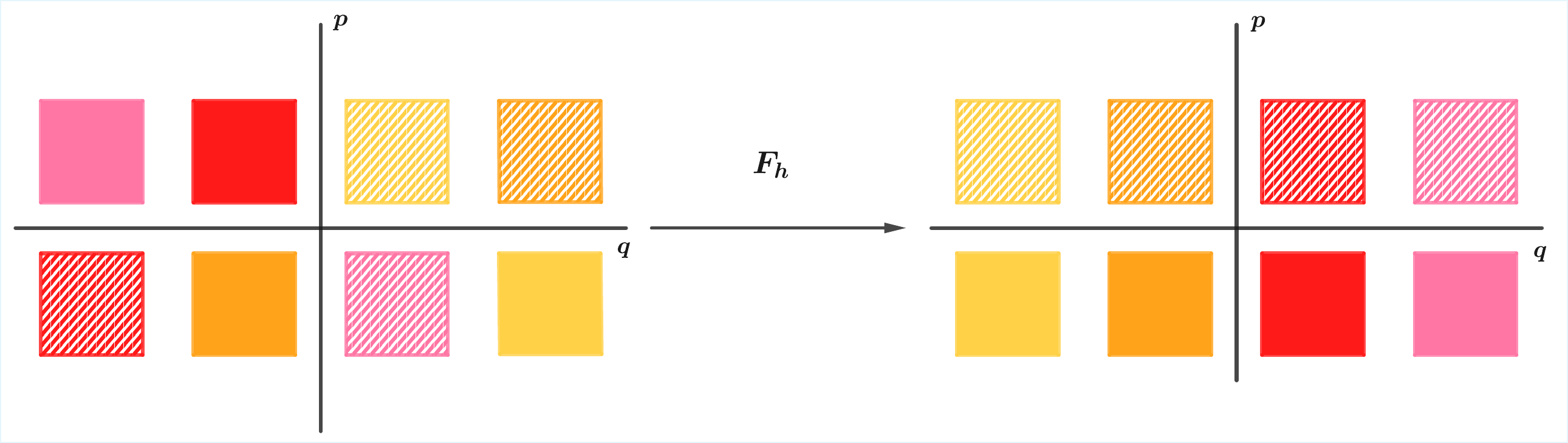} 
\captionof{figure}{The permutation $F_h$}\label{Permutation 1}
\end{center}

As a preliminary step, let us construct a Hamiltonian diffeomorphism translating each cube $C(m_n,h-\eta)$, $n\in\llbracket0,N\rrbracket$  to a cube of center $({q}^n,o^n)$, where $\|o^{n_1}-o^{n_2}\|\ge 2h$ for every $n_1\ne n_2$ in $\llbracket 0,N \rrbracket$, see \cref{Permutation 2}. 
The cubes of the mesh that cover $K$ 
are organized in columns: we choose $J\subset \llbracket0,N\rrbracket$ of minimal cardinality such that $\{q^j\mid j\in J\}=\{q^n\mid n\in \llbracket0,N\rrbracket\}$
and we set $C_j=\{n\in \llbracket0,N\rrbracket\mid q^n=q^j\}$ for $j\in J$. 
Given $\{a^j\mid j\in J\}$ in $\R^d$, we consider a function $f\in \smooth(T^*M,\R)$ with compact support and such that $f(q,p)=a^j \cdot q$ 
if $\|q-q^j\|<h-\eta$ and $p$ is at distance at most $|a^j|+h-\eta$ from $\{p^n\mid n\in C_j\}$. 
Then 
\[e^{\f{f}}(q,p)=(q,p-
a^j), \qquad \forall \: (q,p)\in C(m_n,h-\eta),\;\forall \: n\in C_j.\]
So, up to a suitable choice of the vectors $a^j$, $e^{\f{f}}$ is the required Hamiltonian diffeomorphism translating 
each cube $C(m_n,h-\eta)$, $n\in\llbracket0,N\rrbracket$,  to a cube of center $({q}^n,o^n)$, where $\|o^{n_1}-o^{n_2}\|\ge 2h$ for every $n_1\ne n_2$ in $\llbracket 0,N \rrbracket$. 
\begin{center}
\includegraphics[scale=0.30]{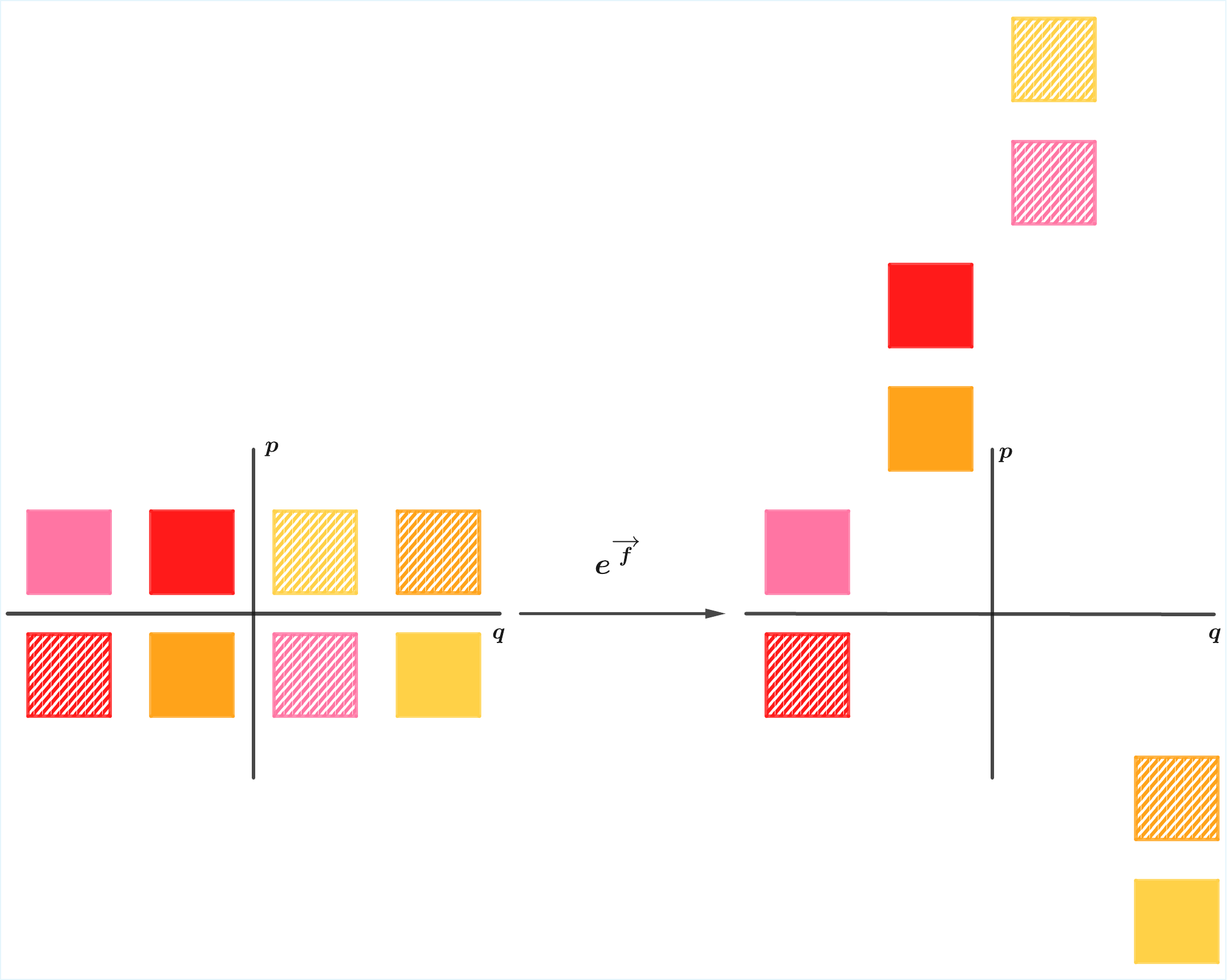}
\captionof{figure}{The Hamiltonian transformation $e^{\f{f}}$} \label{Permutation 2}
\end{center}
Then we want to translate every cube $C((q^n,o^n),h-\eta)$, $n\in\llbracket0,N\rrbracket$, to the cube $C((\bar{q}^n,o^n),h-\eta)$ by a localized horizontal translation, see \cref{Permutation 3}. 
We consider a function $g\in \smooth(T^*M,\R)$ 
with compact support and 
such that $g(q,p)=(\bar{q}^n-q^n)\cdot p$ for every 
$(q,p)$ such that $\|p-o^n\|< h-\eta$ and $q$ is at distance at most $h-\eta$ from the segment connecting $q^n$ and $\bar{q}^n$.

Then 
\[e^{\f{g}}(q,p)=(q+\bar{q}^n - q^n,p), \qquad \forall \: 
(q,p) \in C((q^n,o^n),h-\eta).\]
\begin{center}
\includegraphics[scale=0.30]{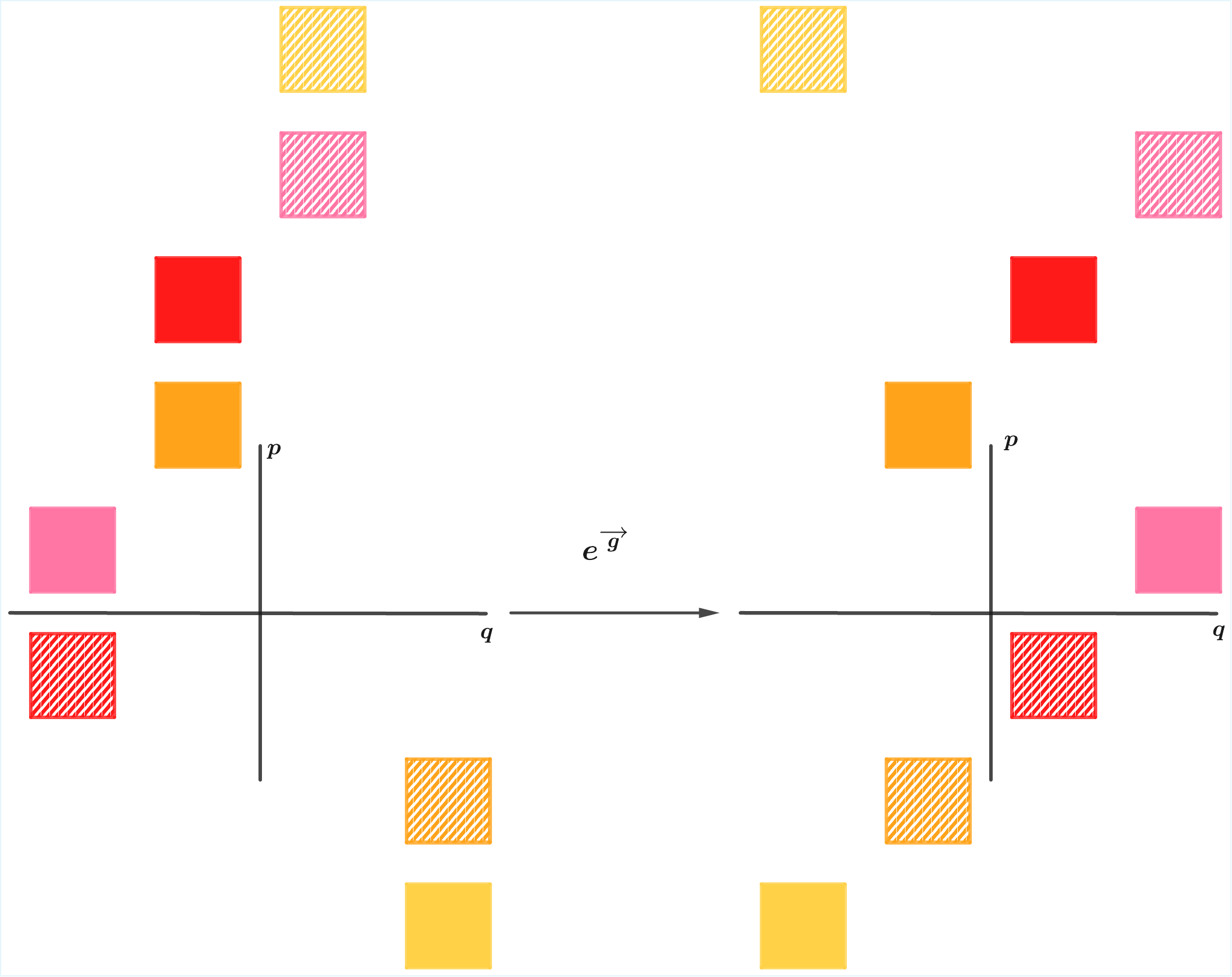}
\captionof{figure}{The Hamiltonian transformation $e^{\f{g}}$} \label{Permutation 3}
\end{center}
With the two previous transformations we obtained 
a new grid $\{(\bar{q}^n,o^n)\mid n\in\llbracket0,N\rrbracket\}$ with the good $q$-components. We look for a  Hamiltonian diffeomorphism that re-shuffles the cubes whose center 
have the same $q$-component $\bar{q}^n$ without altering the others. 
We have already seen how to approximate localized  
translations in the $p$-coordinate, see \cref{Permutation 4}. In the case where $d\ge 2$ the desired re-shuffle can be obtained by iterating sufficiently many such transitions. 
In the case $d=1$ it is sufficient to prove that there exists a Hamiltonian diffeomorphism that permutes two consecutive cubes in the same column without altering the other cubes, see \cref{Permutation 5}. 
\begin{center}
\includegraphics[scale=0.30]{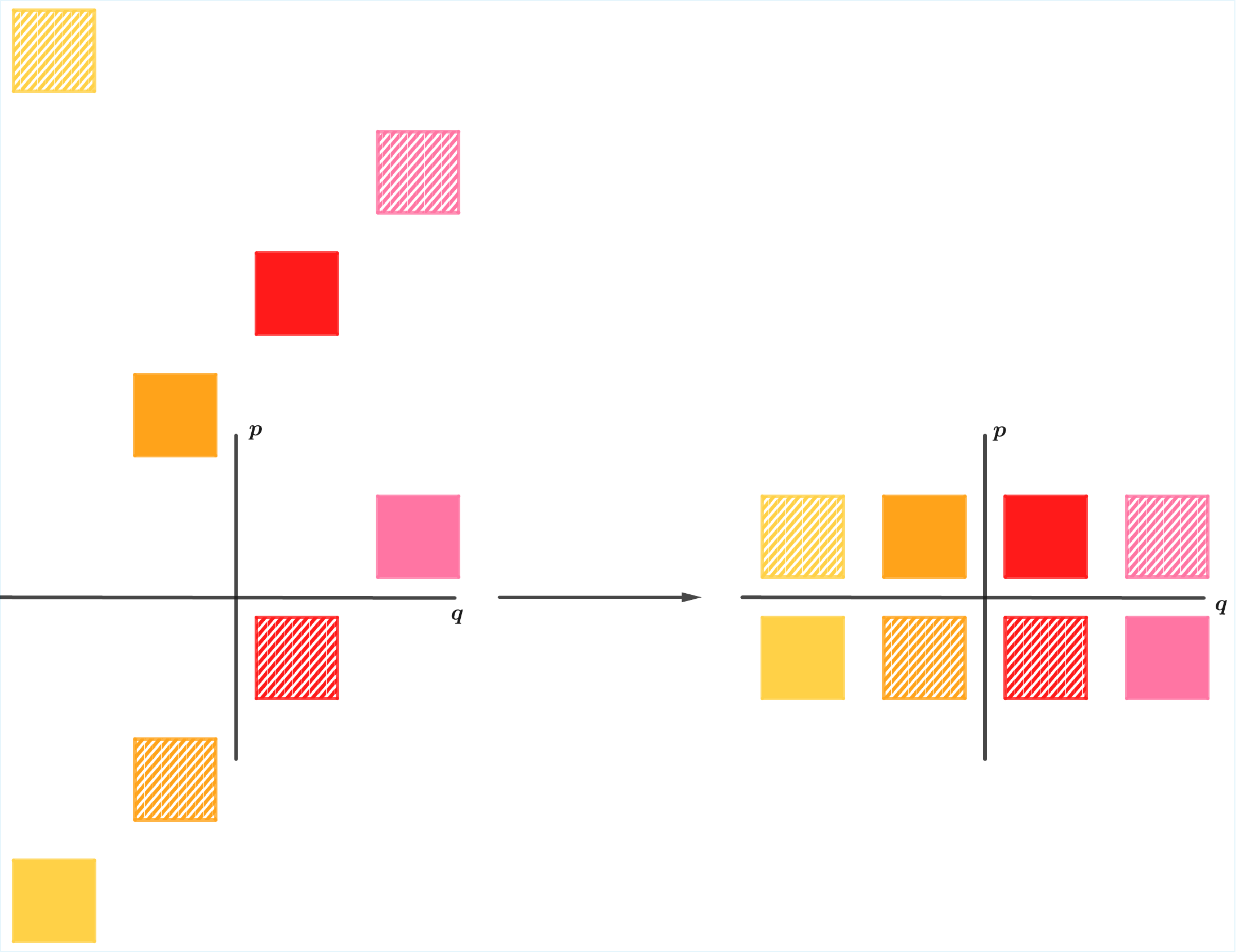}
\captionof{figure}{Localized vertical translations} \label{Permutation 4}
\end{center}
Let us consider two consecutive cubes of the same column, with centers $(\tilde{q},o^{n_1})$ and $(\tilde{q},o^{n_2})$, where $\tilde q=\bar{q}^{n_1}=\bar{q}^{n_2}$.
Consecutive means that the segment between $(\tilde{q},o^{n_1})$ and $(\tilde{q},o^{n_2})$ does not contain any other element of the grid. Moreover, we can assume that the other cubes with $q$-component equal to $\tilde{q}$ are as much separated as required.  \\
We are going to permute the cubes with a rotation of center $(\tilde{q},\tilde{p})$, where $\tilde{p} = \frac{o^{n_1}+o^{n_2}}{2}$, and of angle $\frac{\pi}{2}$. We consider a function $h_w\in \smooth(T^*M,\R)$ such that
\[h_w(q,p)=\frac{|p-\tilde{p}|^2}{2}+\frac{|q-\tilde{q}|^2}{2w^2}\]
in 
\[\Omega_w=\left\{(q,p)\mid |q-\tilde{q}|<h,\;|p-\tilde p|<\frac{h}{w}\right\},\] 
with $w>0$ to be fixed.

The Hamiltonian system associated with $h_w$ has equations 
\begin{equation} \label{eqc1} 
    \left\{ \begin{array}{ll}
         \frac{d}{dt}(q(t)-\tilde{q})=p(t)-\tilde p, \\
         \frac{d}{dt}(p(t)-\tilde p)=-\frac{1}{w^2}(q(t)-\tilde{q}),
    \end{array}
    \right.
\end{equation}
in $\Omega_w$. As long as they stay in $\Omega_w$ 
its solutions have the expression $q(t)=\tilde q+r_0 \sin(\frac tw+\theta_0)$, $p(t)=\tilde p+\frac{r_0}{w} \cos(\frac tw+\theta_0)$. Hence, for $w$ small enough, each solution of \eqref{eqc1} with initial condition in  $C((\tilde{q},o^{n_1}),h-\eta)\cup C((\tilde{q},o^{n_2}),h-\eta)$ stays in $\Omega_w$ forever. 
Moreover, we can assume that the support of $h_w$ is compact and does not intersect the cubes $C((\bar q^n,o^n),h-\eta)$ for $n\ne n_1,n_2$. 
Then $e^{w \pi\f{h_w}}$ permutes the cubes $C((\tilde{q},o^{n_1}),h-\eta)$ and $C((\tilde{q},o^{n_2}),h-\eta)$ and is the identity on $C((\bar q^n,o^n),h-\eta)$ for $n\ne n_1,n_2$.
%
\begin{center}
\includegraphics[scale=0.35]{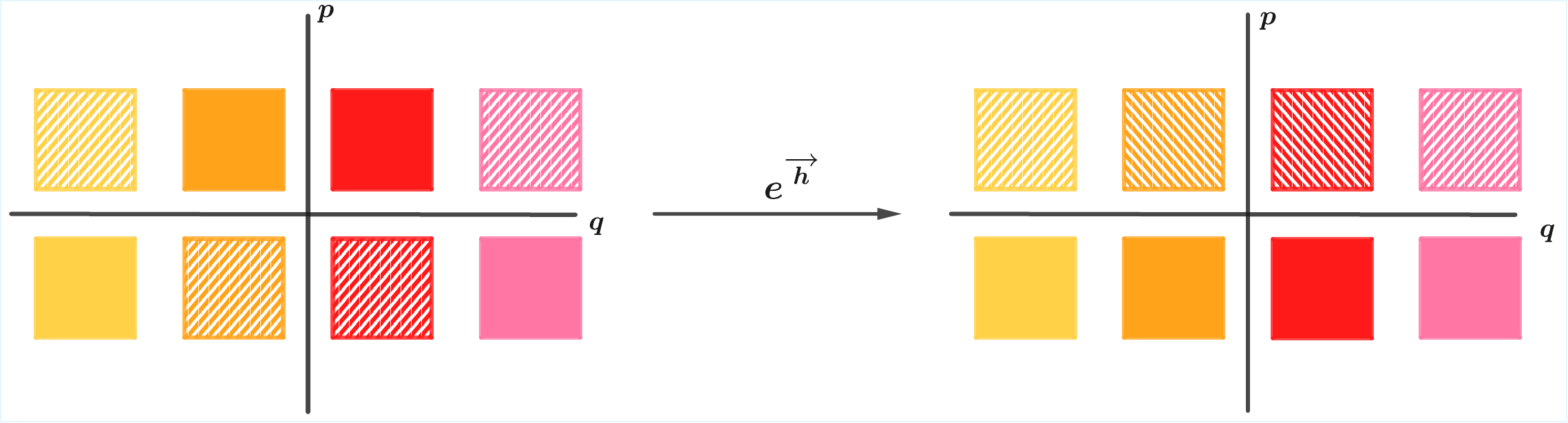}
\captionof{figure}{Localized Hamiltonian rotation of angle $\frac{\pi}{2}$} \label{Permutation 5}
\end{center}
\underline{
Fourth step:} For every $h>\eta>0$ and every permutation $F_h$ of a mesh $M_h$, we have constructed a Hamiltonian diffeomorphism $\phi$ such that $\phi(C(m_n,h-\eta))=F_h(C(m_n,h-\eta))$ for every $n\in\llbracket0,N\rrbracket$, so that
$$\sum_{n=0}^N\ro(m_n)\1_{C(m_n,h-\eta)}\circ F_h = \sum_{n=0}^N\ro(m_n)\1_{C(m_n,h-\eta)} \circ \phi.$$ 
Therefore, according to the second step, for $h$ smal enough, 
$$\|\ro\circ F_h - \ro \circ \phi\|_{L^r}< 2\e.$$
\end{proof}
\noindent \textbf{Proof of Theorem~\ref{thm:closure-orbit}.} Given $\ro_1 \in \LL(\ro_0)$, we must prove that $\ro_1 \in \ov{\mathcal{O}(\ro_0)}$. According to Lemma~\ref{lem9}, for every $\e >0$, every $h >0$ small enough, and every mesh $M_h$ of size $h$, there exists a permutation $F_{\e,h}$ of $M_h$ such that 
$$\|\ro_0 \circ F_{\e,h} - \ro_1\|_{L^r}< \e.$$
Moreover, according to Lemma~\ref{lem10}, up to further reducing $h$, there exists $\phi_h \in \DHa(T^*M)$ such that 
$$\|\ro_0 \circ F_{\e,h} - \ro_0\circ \phi_h\|_{L^r}< \e,$$
whence $\|\ro_1-\ro_0 \circ \phi_h\|_{L^r}< 2\e.$ \hfill$\square$

\section{Some properties of vector fields}\label{sec:propertiesVF}

We collect here some facts from the theory of ODEs  that we will extensively use later in the article. The next two  propositions (see, e.g., \cite[Theorem~8.7 and Lemma~8.10]{AS}) allow to deduce the convergence of flows from that of vector fields.
In what follows, ${\rm Vec}(M)$ denotes the space of $\smooth$ vector fields on the manifold $M$, which can be endowed with the \emph{compact-open topology}, that is, the topology identified by the family of semi-norms   $\|\cdot\|_{\ell,K}=\|\cdot\|_{C^\ell(K)}$, where $\ell\in \N$ and $K$ is compact in $M$. 
The group ${\rm Diff}(M)$ is also endowed with the  compact-open topology.

\begin{prop} \label{prop2}
Let $f_n\in {\rm Vec}(M)$, $n\in \N$, and $f\in {\rm Vec}(M)$ be complete and such that $f_n\to f$ 
for the compact-open topology of ${\rm Vec}(M)$.
Then, for any $t\in \R$, $e^{tf_n}\to e^{tf}$ 
for the compact-open topology  of ${\rm Diff}(M)$.
\end{prop}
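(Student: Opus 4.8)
The plan is to reduce everything to a local, quantitative statement about solutions of ODEs depending on a parameter, and then patch the local estimates together using completeness of the limit field $f$. First I would fix $t \in \R$ (say $t > 0$; the case $t < 0$ is symmetric and $t=0$ is trivial) and a compact set $K \subset M$. Since $f$ is complete, the set $\mathcal{K} := \{ e^{sf}(x) \mid x \in K,\ s \in [0,t]\}$ is a compact subset of $M$: it is the image of the compact $K \times [0,t]$ under the continuous flow map $(x,s)\mapsto e^{sf}(x)$. I would then choose a slightly larger compact neighbourhood $\mathcal{K}'$ of $\mathcal{K}$ (e.g. a closed tube of radius $\delta$ around $\mathcal{K}$ in some fixed auxiliary complete Riemannian metric), so that the trajectory $s \mapsto e^{sf}(x)$ stays in the interior of $\mathcal{K}'$ with a definite margin for all $x \in K$, $s\in[0,t]$.

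The core estimate is Gr\"onwall's inequality in charts. On $\mathcal{K}'$ the field $f$ is Lipschitz with some constant $L$ (with respect to the chosen metric, using finitely many charts covering $\mathcal{K}'$). For $n$ large, $\|f_n - f\|_{0,\mathcal{K}'} =: \e_n \to 0$ by the compact-open convergence hypothesis. Writing $x_n(s) = e^{sf_n}(x)$ and $x(s) = e^{sf}(x)$ for a common initial point $x \in K$, as long as both trajectories remain in $\mathcal{K}'$ one gets $\tfrac{d}{ds}\operatorname{dist}(x_n(s),x(s)) \le L\,\operatorname{dist}(x_n(s),x(s)) + \e_n$, hence $\operatorname{dist}(x_n(s),x(s)) \le \tfrac{\e_n}{L}(e^{Ls}-1) \le \tfrac{\e_n}{L}(e^{Lt}-1)$. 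For $n$ large enough this bound is smaller than the margin $\delta$, which by a standard continuation (bootstrap) argument shows that $x_n(s)$ indeed cannot leave $\mathcal{K}'$ on $[0,t]$, so the estimate is valid for all $s \in [0,t]$ and all $x \in K$. In particular $e^{tf_n}$ is defined on $K$ for $n$ large and $\|e^{tf_n} - e^{tf}\|_{0,K} \to 0$. This handles the $\ell = 0$ seminorms.

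For the higher-order seminorms $\|\cdot\|_{\ell,K}$, $\ell \ge 1$, the plan is to differentiate the ODE in the initial condition: the Jacobian $J_n(s,x) = D_x(e^{sf_n})(x)$ solves the variational (linearized) equation $\dot{J}_n = (Df_n)(x_n(s))\,J_n$, $J_n(0,x) = \mathrm{Id}$, and similarly for $J(s,x)$ with $f$. Since $f_n \to f$ in $C^\ell(\mathcal{K}')$, we have $Df_n \to Df$ in $C^{\ell-1}$, and combined with the already-established convergence $x_n \to x$ one applies the same Gr\"onwall-type argument to the variational system to get $J_n \to J$ uniformly on $K \times [0,t]$ with derivatives up to order $\ell-1$ in $x$; iterating (or differentiating the variational equation further) upgrades this to $C^\ell$ convergence of the flow maps. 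The cleanest way to present this is to invoke the smooth dependence of ODE solutions on parameters and initial data together with the continuity of the "take the time-$t$ flow" operation, which is exactly the content cited from \cite[Theorem~8.7 and Lemma~8.10]{AS}; I would state the local estimates explicitly only to the extent needed and defer the routine multi-index bookkeeping to that reference.

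The main obstacle is purely a matter of care rather than deep difficulty: one must ensure the perturbed flows $e^{sf_n}$ are actually \emph{defined} on all of $[0,t] \times K$ even though $f_n$ need not be complete. This is precisely why one works inside the fixed compact tube $\mathcal{K}'$ and runs the bootstrap: the $C^0$ Gr\"onwall bound keeps the perturbed trajectories trapped near the unperturbed ones for $n$ large, which both guarantees existence up to time $t$ and localizes all the estimates (Lipschitz constants, derivative bounds) to a single compact set independent of $n$. Everything else is a standard application of Gr\"onwall's lemma and smooth dependence on data.
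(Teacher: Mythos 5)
Your proof is correct. A remark on context first: the paper does not prove Proposition~\ref{prop2} itself but cites it from \cite{AS} (Theorem~8.7 and Lemma~8.10), where the result is derived within the chronological-calculus framework --- vector fields are treated as derivations of $\smooth(M)$ and flows as one-parameter families of automorphisms, with convergence controlled by a system of seminorm estimates on these operators. Your argument is a more elementary, chart-level version of the same underlying content: you establish the $C^0$ estimate directly by Gr\"onwall, handle the potential incompleteness of the $f_n$ (the one genuinely delicate point) by the bootstrap/continuation argument inside the compact tube $\mathcal{K}'$, and then propagate to the $C^\ell$ seminorms via the variational equation. This buys transparency and self-containedness at the price of some multi-index bookkeeping for the higher-order estimates, whereas the operator formalism of \cite{AS} packages that bookkeeping once and for all. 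Both routes are sound; your localization step (that $x_n(s)$ cannot escape $\mathcal{K}'$ for $n$ large, hence $e^{tf_n}$ is defined on $K$ up to time $t$) is exactly the ingredient that makes the statement nontrivial when only the limit field $f$ is assumed complete, and you have handled it correctly.
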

The following property is the analog for vector fields of the Lie product formula.
\begin{prop} \label{th2}
 Let $f,g\in {\rm Vec}(M)$ be complete and such that $f+g$ is complete. Then,
 $$\lim_{n\to \infty}(e^{f/n}e^{g/n})^n=e^{f+g}, $$
for the compact-open topology  of ${\rm Diff}(M)$.
\end{prop}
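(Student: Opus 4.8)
The plan is to fix a compact set $K\subset M$ and an integer $\ell\ge 0$ and to show that $\phi_n^{\,n}\to e^{f+g}$ in $\mathcal{C}^\ell(K)$, where $\phi_n:=e^{f/n}e^{g/n}$; since $K$ and $\ell$ are arbitrary, this is exactly convergence for the compact-open topology. Write also $\psi_n:=e^{(f+g)/n}$, so that $\psi_n^{\,n}=e^{f+g}$ and the goal becomes $\|\phi_n^{\,n}-\psi_n^{\,n}\|_{\mathcal{C}^\ell(K)}\to 0$. Since $f+g$ is complete, the set $K':=\{e^{s(f+g)}(x)\mid x\in K,\ s\in[0,1]\}$ is compact; using continuity of the flows of $f$, $g$ and $f+g$ together with compactness, one fixes a compact set $K''$ having $K'$ in its interior, and a time $\delta>0$, such that $e^{sf}$, $e^{sg}$, $e^{s(f+g)}$ map $K'$ into $K''$ for every $|s|\le\delta$. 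For $n$ with $1/n\le\delta$ one then has $\psi_n^{\,k}(K)\subset K'$ for $0\le k\le n$.

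The analytic core is the \emph{one-step estimate}
\[
\|\phi_n-\psi_n\|_{\mathcal{C}^\ell(K'')}\le\frac{C_\ell}{n^2},
\]
where $C_\ell$ depends only on $f$, $g$, $K''$ and $\ell$. It follows by Taylor-expanding the flows in the small time $1/n$: for a smooth complete field $X$, smooth dependence on the initial condition (differentiating the ODE in the initial point and applying Gronwall) gives $e^{tX}(x)=x+tX(x)+\tfrac{t^2}{2}(DX\cdot X)(x)+O(t^3)$, with all remainders, and their $x$-derivatives up to order $\ell$, bounded uniformly on $K''$ for $|t|\le\delta$. Composing the expansions of $e^{f/n}$ and $e^{g/n}$ yields $\phi_n(x)=x+\tfrac1n(f+g)(x)+O(1/n^2)$, and likewise $\psi_n(x)=x+\tfrac1n(f+g)(x)+O(1/n^2)$, with identical first-order terms; subtracting, the $O(1/n)$ parts cancel and the $\mathcal{C}^\ell(K'')$-bound $O(1/n^2)$ remains. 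One need not identify the $n^{-2}$ term (it is the usual $\tfrac12[f,g]$ correction); only its boundedness is used.

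Next one iterates, via the telescoping identity
\[
\phi_n^{\,n}(x)-\psi_n^{\,n}(x)=\sum_{k=0}^{n-1}\Big(\phi_n^{\,n-1-k}\big(\phi_n(\psi_n^{\,k}(x))\big)-\phi_n^{\,n-1-k}\big(\psi_n(\psi_n^{\,k}(x))\big)\Big).
\]
A standard bootstrap/continuity argument, fed by the one-step estimate, shows that for $n$ large all iterates $\phi_n^{\,j}(K)$, $0\le j\le n$, stay inside $K''$, and that each $\phi_n^{\,j}$ is Lipschitz there with constant $\le e^{L}$, where $L$ bounds the $\mathcal{C}^1(K'')$-norms of $f$ and $g$. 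Since $\psi_n^{\,k}(x)\in K'$, each summand is then $\le e^{L}\|\phi_n-\psi_n\|_{\mathcal{C}^0(K'')}\le e^{L}C_\ell/n^2$, giving $\|\phi_n^{\,n}-\psi_n^{\,n}\|_{\mathcal{C}^0(K)}\le e^{L}C_\ell/n\to 0$. For the full $\mathcal{C}^\ell$ statement one repeats the argument after differentiating up to $\ell$ times; this additionally requires uniform-in-$n$ bounds on $\|\phi_n^{\,j}\|_{\mathcal{C}^\ell(K'')}$ for $0\le j\le n$, which are obtained by differentiating $\phi_n^{\,j+1}=\phi_n\circ\phi_n^{\,j}$, using the chain and Leibniz rules together with $\|\phi_n-\mathrm{id}\|_{\mathcal{C}^\ell(K'')}=O(1/n)$, and closing a discrete Gronwall inequality. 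Plugging these into the telescoped sum yields $\|\phi_n^{\,n}-\psi_n^{\,n}\|_{\mathcal{C}^\ell(K)}=O(1/n)$, which proves the proposition.

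\textbf{Expected main obstacle.} Everything reduces to the last point: controlling the higher derivatives of the iterated compositions $\phi_n^{\,j}$ uniformly in $n$ and in $j\le n$, together with the a priori confinement of all these iterates in a fixed compact set (the bootstrap). The one-step Taylor estimate and the telescoping identity are routine by comparison. Alternatively, the whole statement is contained in \cite[Theorem~8.7 and Lemma~8.10]{AS} and may simply be quoted.
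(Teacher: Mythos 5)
The paper does not prove this statement at all: it cites it, together with the preceding proposition, from \cite[Theorem~8.7 and Lemma~8.10]{AS}, which is precisely the fallback you mention in your last sentence. So, relative to the paper, quoting would have been exactly in the spirit of the text.

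Your direct proof is a legitimate and essentially correct alternative; it is the classical Lie--Trotter argument adapted to the compact-open $\mathcal{C}^\infty$ topology. The decomposition into a one-step Taylor estimate, a telescoping identity, and uniform control of the iterates is the right skeleton, and you rightly single out the genuinely delicate point: keeping the orbits $\phi_n^{\,j}(K)$, $0\le j\le n$, inside a fixed compact $K''$ and obtaining $\mathcal{C}^\ell$ bounds on $\phi_n^{\,j}$ uniformly in $n$ and $j$. Two cautionary remarks. First, the telescoping estimate is clean only in $\mathcal{C}^0$; the $\mathcal{C}^\ell$ version of $\|\,\phi_n^{\,n-1-k}\circ\phi_n-\phi_n^{\,n-1-k}\circ\psi_n\,\|_{\mathcal{C}^\ell}$ requires a chain/Fa\`a di Bruno bound of the shape $\|F\circ\Phi-F\circ\Psi\|_{\mathcal{C}^\ell}\le C\bigl(\|F\|_{\mathcal{C}^{\ell+1}},\|\Phi\|_{\mathcal{C}^\ell},\|\Psi\|_{\mathcal{C}^\ell}\bigr)\,\|\Phi-\Psi\|_{\mathcal{C}^\ell}$, which in turn consumes one more derivative of $F=\phi_n^{\,n-1-k}$; so the induction on $\ell$ must be run with some care on who controls what. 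Second, the discrete Gronwall step for $\|\phi_n^{\,j}\|_{\mathcal{C}^\ell(K'')}$ and the bootstrap that keeps iterates in $K''$ need to be coupled (one assumes confinement to derive bounds, and bounds to justify confinement); this is routine but worth writing out. None of this is a gap in the idea, only in the execution, and your sketch already names it as the obstacle. In short: correct strategy, more elementary and self-contained than the paper's route (a bare citation), at the cost of nontrivial bookkeeping that your sketch leaves implicit.
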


In order to approximate some $\phi \in \DHa(T^*M)$ in the compact-open topology, we can apply a diagonal argument (based on the exhaustion of $M$ by compact sets) and reduce the problem to that of approximating $\phi$ in the $C^\ell$-topology on a given compact for a given $\ell\in \N$. This classical fact is recalled in the following lemma.
 
 \begin{lemma} \label{lem1}
    Let $\phi \in {\rm Diff}(T^*M)$ and $\mathcal{D}\subset {\rm Diff}(T^*M)$. Assume that for every compact set $K \subset T^*M$ and every $\ell\in \N$ there exists a sequence $(\varphi_n)_{n\in \N}$ in $\mathcal{D}$ such that 
    $$\|\varphi_n-\phi\|_{\ell,K}\underset{n \rightarrow  \infty}{\longrightarrow}0.$$
    Then there exists a sequence $(\phi_n)_{n\in \N}$ in $\mathcal{D}$ such that 
    $$\|\phi_n-\phi\|_{\ell,K}\underset{n \rightarrow  \infty}{\longrightarrow}0 \qquad \forall \: \ell \in \N, \: \forall K \subset T^*M \: \text{compact.}$$
\end{lemma}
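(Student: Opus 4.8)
The plan is to exploit an exhaustion of $T^*M$ by compact sets together with a diagonal extraction. First I would fix an exhaustion $K_1\subset K_2\subset\cdots$ of $T^*M$ by compact sets with $T^*M=\bigcup_{j\in\N}K_j$ and $K_j\subset \mathrm{int}(K_{j+1})$; such an exhaustion exists since $T^*M$ is a smooth (hence locally compact, $\sigma$-compact) manifold. The key observation is that, by definition, the compact-open topology on $\mathrm{Diff}(T^*M)$ is metrizable: a sequence $(\phi_n)$ converges to $\phi$ in this topology if and only if $\|\phi_n-\phi\|_{\ell,K}\to 0$ for every $\ell\in\N$ and every compact $K$, and since every compact $K$ is contained in some $K_j$, this is equivalent to $\|\phi_n-\phi\|_{j,K_j}\to 0$ for every $j\in\N$. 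So it suffices to produce a sequence in $\mathcal{D}$ along which each of the countably many semi-norms $\|\cdot-\phi\|_{j,K_j}$ tends to zero.

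The second step is the diagonal construction. For each $j\in\N$, apply the hypothesis with $K=K_j$ and $\ell=j$: there is a sequence $(\varphi^{(j)}_n)_{n\in\N}$ in $\mathcal{D}$ with $\|\varphi^{(j)}_n-\phi\|_{j,K_j}\to 0$ as $n\to\infty$. Hence for each $j$ I can pick an index $n_j$ so large that $\|\varphi^{(j)}_{n_j}-\phi\|_{j,K_j}<\tfrac1j$, and I set $\phi_j:=\varphi^{(j)}_{n_j}\in\mathcal{D}$. Then for any fixed $\ell\in\N$ and any compact $K\subset T^*M$, choose $j_0$ with $K\subset K_{j_0}$ and $j_0\ge\ell$; for every $j\ge j_0$ one has, by monotonicity of the semi-norms in both $\ell$ and the compact set, $\|\phi_j-\phi\|_{\ell,K}\le\|\phi_j-\phi\|_{j,K_j}<\tfrac1j\to 0$. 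This gives $\|\phi_j-\phi\|_{\ell,K}\to0$ for every $\ell$ and every compact $K$, which is exactly the desired conclusion.

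There is essentially no hard obstacle here; the only points requiring a little care are (i) verifying that the semi-norm $\|\psi_1-\psi_2\|_{\ell,K}$ is monotone nondecreasing in $\ell$ and in $K$ (for $K\subset K'$), which justifies the final estimate, and (ii) checking that $\|\psi_1-\psi_2\|_{\ell,K}$ makes sense for maps $T^*M\to T^*M$ — this is understood via local charts, or by embedding, and the monotonicity and the triangle inequality hold in each chart, so the argument is unaffected by the choice of atlas used to define the semi-norms. With these routine verifications in place, the diagonal sequence $(\phi_j)$ is the required one, and the proof is complete.

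$\hfill\square$
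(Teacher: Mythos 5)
Your diagonal argument based on an exhaustion of $T^*M$ by compact sets is exactly the approach the paper alludes to (the paper merely states the lemma as a classical fact, sketching "a diagonal argument based on the exhaustion of $M$ by compact sets" without writing out details). The proof is correct, and the two routine verifications you flag — monotonicity of the semi-norms in $\ell$ and in $K$, and their well-definedness via charts — are indeed the only points needing care.
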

Given $X\in\Ve(T^*M)$ its associated \emph{adjoint operator} is denoted by 
$$\ad_X :  \Ve(T^*M) \rightarrow \Ve(T^*M), \quad \ad_{X}Y=[X,Y].$$
Given $\phi\in\Diff(T^*M)$, we denote its \emph{pushforward} action on vector fields
as 
$$\phi_* :\Ve(T^*M)\rightarrow \Ve(T^*M), \quad (\phi_* X)(x)=D\phi(\phi^{-1}(x)) X(\phi^{-1}(x)).$$
Recall that if $\phi$ is a syplectomorphism and $h\in \smooth(T^*M,\R)$, then
\begin{equation}\label{prop:conjugation}
    \phi e^{\f{h}}\phi^{-1}=e^{\phi_* \f{h}}=e^{\f{h\circ \phi}}.
\end{equation}


The following estimate can be found in \cite[Section~8]{agrachev-sarychev2} (see also \cite{agrachev-gam}).
\begin{prop} \label{prop1} 
    Let $\ell\in \N$ and $K \subset T^*M$ be compact. There exists a compact neighborhood $K'$ of $K$ and a constant $c=c(K',\ell)>0$ such that, 
   for every $X,Y \in \Ve(T^*M)$,
    \[
    \left\|(e^{X})_*Y- \sum_{j=0}^{N-1}\frac{\ad^{j}_{X}Y}{j!}\right\|_{\ell,K}\leq ce^{c\|X\|_{\ell+1,K'}}\frac{\|X\|^N_{\ell+N,K'}}{N!}\|Y\|_{\ell+N,K'}.\]
\end{prop}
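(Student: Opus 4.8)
The plan is to treat $t\mapsto Z_t:=(e^{tX})_*Y$ as a smooth curve of vector fields and Taylor expand it at $t=0$. From the flow identity $(e^{tX})_*=(e^{(t-s)X})_*\circ(e^{sX})_*$, differentiating in $s$ at $s=0$ gives the linear equation $\dot Z_t=\ad_X Z_t$, $Z_0=Y$ (one uses that pushforward by a diffeomorphism is a Lie-algebra homomorphism and that $X$ is invariant under its own flow; this is the classical formula $\frac{d}{dt}(e^{tX})_*Y=(e^{tX})_*\ad_XY=\ad_X(e^{tX})_*Y$). Hence $Z_0^{(j)}=\ad_X^jY$ for all $j$, and Taylor's formula with integral remainder for the curve $t\mapsto Z_t$ in $\mathcal{C}^\ell(K)$ yields
\begin{equation*}
(e^{X})_*Y=\sum_{j=0}^{N-1}\frac{\ad_X^jY}{j!}+\frac{1}{(N-1)!}\int_0^1(1-s)^{N-1}\,(e^{sX})_*\!\big(\ad_X^NY\big)\,ds,
\end{equation*}
where we also used $\ad_X^N Z_s=(e^{sX})_*(\ad_X^NY)$. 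Everything thus reduces to bounding the $\|\cdot\|_{\ell,K}$-norm of the remainder.

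For this I would prove two auxiliary estimates. The first is a flow-pushforward bound: fixing a compact neighborhood $K'$ of $K$ large enough to contain the trajectories of $X$ issued from $K$ for times in $[0,1]$, there is $c_1=c_1(K',\ell)$ with $\|(e^{sX})_*W\|_{\ell,K}\le c_1 e^{c_1\|X\|_{\ell+1,K'}}\|W\|_{\ell,K'}$ for all $s\in[0,1]$ and $W\in\Ve(T^*M)$. This comes from Gronwall's lemma applied to the variational equations satisfied by the flow $e^{sX}$ and its derivatives up to order $\ell$ --- which involve the derivatives of $X$ up to order $\ell+1$, whence both the shift $\ell\mapsto\ell+1$ and the exponential dependence on $\|X\|_{\ell+1,K'}$ --- together with Leibniz- and chain-rule estimates for composing $W$ with $e^{-sX}$ and multiplying by the Jacobian of $e^{sX}$. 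The second is an iterated-bracket bound $\|\ad_X^NY\|_{\ell,K'}\le c_2\,\|X\|_{\ell+N,K'}^{N}\,\|Y\|_{\ell+N,K'}$: here one writes $\ad_X^NY$ out explicitly as a finite sum of monomials, each of them $\pm$ a product of $N+1$ factors --- $N$ obtained from components of $X$ and one from a component of $Y$ --- carrying a total of $N$ partial derivatives; the general Leibniz rule then bounds the $\mathcal{C}^\ell$-norm of each such product by the product of the $\mathcal{C}^\ell$-norms of its factors (up to a constant depending only on $\ell$ and $d$), and each factor has $\mathcal{C}^\ell$-norm $\le\|X\|_{\ell+N,K'}$ or $\le\|Y\|_{\ell+N,K'}$ since at most $N$ derivatives fall on it.

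Combining the two estimates with $\int_0^1(1-s)^{N-1}\,ds=\frac1N$ turns the remainder bound into $\frac1{N!}\,c_1 e^{c_1\|X\|_{\ell+1,K'}}\,c_2\,\|X\|_{\ell+N,K'}^N\,\|Y\|_{\ell+N,K'}$, which after renaming constants is the asserted inequality. Alternatively, one may simply invoke the corresponding estimate from the chronological calculus of Agrachev and Gamkrelidze \cite{agrachev-gam}, which is the route used in \cite{agrachev-sarychev2}.

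The delicate step --- the one I expect to be the main obstacle --- is the bracket estimate and, more precisely, controlling its constants: bounding $\|\ad_X^NY\|_{\ell,K'}$ by naively iterating a bilinear inequality $\|[X,Z]\|_{m,K'}\le c(m)\|X\|_{m+1,K'}\|Z\|_{m+1,K'}$ over $m=\ell,\dots,\ell+N-1$ produces a factor $\prod_m c(m)$ that typically grows like $2^{\Theta(N^2)}$ and ruins the bound; the remedy is to expand the $N$-fold bracket in one step, apply the Leibniz rule once, and carefully keep track of how many derivatives hit each factor. A secondary, bookkeeping-type subtlety is that $K'$ must be chosen depending on $X$ (or one restricts to a bounded family of vector fields) so that the flow $e^{sX}$, $s\in[0,1]$, does not leave $K'$.
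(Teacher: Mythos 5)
The paper does not prove this proposition; it imports it from \cite[Section~8]{agrachev-sarychev2} (see also \cite{agrachev-gam}). Your sketch reconstructs the standard chronological-calculus argument used in those references: Taylor-expand $t\mapsto(e^{tX})_*Y$ with integral remainder, identify the $j$-th derivative at $0$ with an iterated bracket, and bound the remainder by combining a Gronwall-type pushforward estimate on a slightly enlarged compact with a one-shot multilinear estimate of $\ad_X^NY$. You also correctly flag the two genuine delicacies, namely the need to expand the $N$-fold bracket all at once (naively iterating the bilinear bound $\|[X,Z]\|_m\le c(m)\|X\|_{m+1}\|Z\|_{m+1}$ does, as you say, yield a factor growing like $2^{\Theta(N^2)}$) and the need to ensure that $e^{sX}(K)\subset K'$ for $s\in[0,1]$.

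Two remarks. First, a sign: with the paper's conventions ($\ad_XY=[X,Y]$ and $\phi_*W(x)=D\phi(\phi^{-1}(x))W(\phi^{-1}(x))$), the variational ODE is $\frac{d}{dt}(e^{tX})_*Y=-\ad_X\bigl((e^{tX})_*Y\bigr)$, so the Taylor polynomial should read $\sum_{j}(-\ad_X)^jY/j!$. You carried over the same sign as the paper's statement, which is internally consistent (and, since STAR is stable under $f\mapsto -f$, harmless downstream), but it is worth being aware of. Second, your ``bookkeeping-type subtlety'' about $K'$ is actually a genuine hypothesis missing from the statement as written: with $K'$ fixed independently of $X$, the inequality fails if, say, $X$ is a large constant vector field on $K'$ and $Y$ is supported near $e^{-X}(K)$ but away from $K'$, for then the right-hand side vanishes while the left does not. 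This causes no trouble in the paper, where the estimate is always applied with $X=\tau\f{f}$ and $\tau\to 0$ while $Y$ is fixed, so that $e^{sX}(K)$ stays inside a fixed $K'$; but a self-contained proof such as yours should state this localisation hypothesis explicitly rather than leave it implicit.
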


\section{Some properties of the approximately reachable sets}\label{sec:4}

\subsection{Some properties of $\ov{\mathcal{R}_{\rm st}}$}
We show in the next proposition that the small-time approximately reachable set $\ov{\mathcal{R}_{\rm st}}$ is a closed semi-group.
\begin{prop} \label{prop3}
The following holds:
\begin{itemize}
   \item[(i)] If $\phi,\psi \in \ov{\mathcal{R}_{\rm st}}$, then $\psi \circ \phi \in \ov{\mathcal{R}_{\rm st}}$.
   \item[(ii)] If $(\phi_n)_{n\in \N}\subset\ov{\mathcal{R}_{\rm st}}$ is such that $\phi_n\to \phi$, then $\phi\in \ov{\mathcal{R}_{\rm st}}$.
   \end{itemize}
\end{prop}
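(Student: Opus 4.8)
The plan is to prove (i) and (ii) separately, both by elementary arguments using the definition of $\ov{\mathcal{R}_{\rm st}}$ and the concatenation/reversal structure of piecewise-constant controls.

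For (i), suppose $\phi,\psi\in\ov{\mathcal{R}_{\rm st}}$. Fix $\e>0$, a compact $K\subset T^*M$, and $\ell\in\N$. First I would use that $\phi$ is small-time approximately reachable to find, for a small parameter $\delta>0$ to be chosen and on a compact neighborhood $K'$ of $K$ with a slightly larger smoothness index, a control $u_1(\cdot)\in\PWC([0,\tau_1],\R^m)$ with $\tau_1\le\e/2$ such that $\|\Phi^{\tau_1}_{H_{u_1}}-\phi\|_{\mathcal{C}^\ell(K')}<\delta$. Since $\Phi^{\tau_1}_{H_{u_1}}$ is a diffeomorphism, $\phi(K)$ is compact, so I can pick a compact $K''\supset\phi(K)$ that also contains $\Phi^{\tau_1}_{H_{u_1}}(K)$ for $\delta$ small; then using that $\psi$ is small-time approximately reachable on $K''$ I find $u_2(\cdot)\in\PWC([0,\tau_2],\R^m)$ with $\tau_2\le\e/2$ such that $\|\Phi^{\tau_2}_{H_{u_2}}-\psi\|_{\mathcal{C}^\ell(K'')}<\delta$. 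The concatenated control $u(\cdot)$ equal to $u_1$ on $[0,\tau_1]$ and (time-shifted) $u_2$ on $[\tau_1,\tau_1+\tau_2]$ is again in $\PWC$, has total duration $\tau_1+\tau_2\le\e$, and its flow satisfies $\Phi^{\tau_1+\tau_2}_{H_u}=\Phi^{\tau_2}_{H_{u_2}}\circ\Phi^{\tau_1}_{H_{u_1}}$ by the flow composition property of the non-autonomous Hamilton equation. It then remains to estimate $\|\Phi^{\tau_2}_{H_{u_2}}\circ\Phi^{\tau_1}_{H_{u_1}}-\psi\circ\phi\|_{\mathcal{C}^\ell(K)}$ by a triangle inequality, bounding $\|\Phi^{\tau_2}_{H_{u_2}}\circ\Phi^{\tau_1}_{H_{u_1}}-\psi\circ\Phi^{\tau_1}_{H_{u_1}}\|_{\mathcal{C}^\ell(K)}$ using the $\mathcal{C}^\ell(K'')$-closeness of $\Phi^{\tau_2}_{H_{u_2}}$ to $\psi$ composed on the right with a fixed smooth map, and $\|\psi\circ\Phi^{\tau_1}_{H_{u_1}}-\psi\circ\phi\|_{\mathcal{C}^\ell(K)}$ using uniform continuity of $\psi$ and its derivatives on $K'$ together with the $\mathcal{C}^\ell(K')$-closeness of $\Phi^{\tau_1}_{H_{u_1}}$ to $\phi$. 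Choosing $\delta$ small enough (depending on $\e,K,\ell,\psi$) makes both terms $<\e/2$, giving $\Psi\circ\phi\in\ov{\mathcal{R}_{\rm st}}$.

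For (ii), suppose $(\phi_n)_n\subset\ov{\mathcal{R}_{\rm st}}$ with $\phi_n\to\phi$ in the compact-open topology. Fix $\e>0$, a compact $K$, and $\ell\in\N$. By convergence, choose $n$ large so that $\|\phi_n-\phi\|_{\mathcal{C}^\ell(K)}<\e/2$; then since $\phi_n\in\ov{\mathcal{R}_{\rm st}}$, choose $\tau\le\e$ and $u(\cdot)\in\PWC([0,\tau],\R^m)$ with $\|\Phi^\tau_{H_u}-\phi_n\|_{\mathcal{C}^\ell(K)}<\e/2$. The triangle inequality gives $\|\Phi^\tau_{H_u}-\phi\|_{\mathcal{C}^\ell(K)}<\e$, so $\phi\in\ov{\mathcal{R}_{\rm st}}$. (This also confirms $\ov{\mathcal{R}_{\rm st}}$ is closed in the compact-open topology.)

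The main obstacle is the bookkeeping in (i): one must be careful that the two approximation steps are carried out on nested compacts of the correct size — the second step must be performed on a compact containing the image under the first (approximate) flow, which itself depends on the approximation quality $\delta$ — and that the smoothness indices and neighborhoods are chosen so the composition estimates close up. This is the standard "chain rule estimate for composition of close maps" argument; none of it is deep, but it requires stating the dependence of constants carefully. Everything else follows from the concatenation property of piecewise-constant controls and the semigroup/composition identity for the associated flows.
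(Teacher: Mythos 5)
Your proof is correct and follows essentially the same route as the paper: concatenate controls, observe that $\Phi^{\tau_1+\tau_2}_{H_u}=\Phi^{\tau_2}_{H_{u_2}}\circ\Phi^{\tau_1}_{H_{u_1}}$, then close the triangle inequality with a chain-rule/mean-value estimate on the right compacts. Part (ii) is identical to the paper's.

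For part (i) there is a small but worth-noting difference in the middle term of the triangle inequality. You insert $\psi\circ\Phi^{\tau_1}_{H_{u_1}}$, so the mean-value-type estimate falls on the \emph{fixed} map $\psi$, whose $\mathcal{C}^{\ell+1}$ bound on a compact is automatic. The paper inserts $\Phi^{\tau_v}_{H_v}\circ\phi$ and applies the mean value estimate with $\Phi^{\tau_v}_{H_v}$ on the outside, which requires the additional (brief) observation that one may choose $\Phi^{\tau_v}_{H_v}$ so that $\|\Phi^{\tau_v}_{H_v}\|_{\ell+1,\tilde K}$ is bounded independently of $\varepsilon$ — achievable since the definition of $\ov{\mathcal{R}_{\rm st}}$ permits approximation in $\mathcal{C}^{\ell+1}$ as well. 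Your decomposition sidesteps that remark, at the modest cost that your first term $\|(\Phi^{\tau_2}_{H_{u_2}}-\psi)\circ\Phi^{\tau_1}_{H_{u_1}}\|_{\ell,K}$ now involves composing on the right with $\Phi^{\tau_1}_{H_{u_1}}$, which is \emph{not} a fixed map as you state, but only $\delta$-close to $\phi$ in $\mathcal{C}^\ell(K')$; the needed uniform bound on $\|\Phi^{\tau_1}_{H_{u_1}}\|_{\mathcal{C}^\ell(K')}$ is immediate from that closeness, so the argument goes through, but the phrasing ``fixed smooth map'' should be corrected. Both decompositions are standard and equivalent in difficulty; neither has a gap.
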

\begin{proof}

\noindent
\underline{Proof of (i):} Let $\ell\in \N$ and $K \subset T^*M$ be compact. Given $\e>0$ 
there exist $u(\cdot),v(\cdot)\in \PWC([0,\e],\R^m)$ and 
$\tau_u,\tau_v\in [0,\e]$
such that 
$$\|\Phi_{H_{u}}^{\tau_u}- \phi\|_{\ell,K}< \e \qquad \text{and} \qquad \|\Phi_{H_{v}}^{\tau_v}-\psi\|_{\ell,\phi(K)}< \e.$$
Consider $w(\cdot)\in \PWC([0,\tau_u+\tau_v],\R^m)$ such that
\begin{equation*}
    \left\{\begin{array}{ll}
         w(t)=u(t) \quad \text{if }t \in [0,\tau_u]  \\
         w(t)=v(t-\tau_u) \quad \text{if }t \in (\tau_u,\tau_u+\tau_v].
    \end{array} \right.
\end{equation*}
Then $\Phi_{H_{w}}^{\tau_u+\tau_u}=\Phi_{H_{v}}^{\tau_v}\circ \Phi_{H_{u}}^{\tau_u}$ and 
\begin{align*}
    \|\Phi_{H_{w}}^{\tau_u+\tau_v}-\psi \circ \phi\|_{\ell,K}&\leq \|\Phi_{H_{v}}^{\tau_v}\circ \Phi_{H_{u}}^{\tau_u}-\Phi_{H_{v}}^{\tau_v}\circ\phi\|_{\ell,K}+\|\Phi_{H_{v}}^{\tau_v}\circ \phi-\psi \circ \phi\|_{\ell,K} \\
    &= \|\Phi_{H_{v}}^{\tau_v}\circ \Phi_{H_{u}}^{\tau_u}-\Phi_{H_{v}}^{\tau_v}\circ\phi\|_{\ell,K} + \|\Phi_{H_{v}}^{\tau_v}-\psi\|_{\ell,\phi(K)} \\
    &< \|\Phi_{H_{v}}^{\tau_v}\circ \Phi_{H_{u}}^{\tau_u}-\Phi_{H_{v}}^{\tau_v}\circ\phi\|_{\ell,K} + \e.
\end{align*}
There exists a compact $\tilde{K} \subset V$ such that $\Phi_{H_{u}}^{\tau_u}(K)\cup \phi(K) \subset \tilde{K}$ independently of $\e$. 
Moreover, we can assume that there exists 
 $C>0$ depending on $\tilde K$, $\ell$, and $\phi$ (and independent of $\e$) such that
$\|\Phi_{H_{v}}^{\tau_v}\|_{\ell+1,\tilde{K}}<C$. 
By applying the mean value theorem, we obtain that 
\begin{align*}
    \|\Phi_{H_{v}}^{\tau_v}\circ \Phi_{H_{u}}^{\tau_u}
    -\Phi_{H_{v}}^{\tau_v}\circ \phi\|_{\ell,K}&\leq \|\Phi_{H_{v}}^{\tau_v}\|_{\ell+1,\tilde{K}}\|\Phi_{H_{u}}^{\tau_u}-\phi\|_{\ell,K} 
    < {C}\e.
\end{align*}
\noindent
\underline{Proof of (ii):} Let $\varepsilon>0$, $\ell\in \N$, and $K\subset T^*M$ be compact. Let $n$ be such that $\|\phi_n-\phi\|_{\ell,K}<\e$. Since $\phi_n\in\ov{\mathcal{R}_{\rm st}}$, there exist $\tau\in [0,\e]$ and   $u(\cdot)\in \PWC([0,\tau],\R^m)$ such that $\|\Phi_{{H_u}}^\tau-\phi_n\|_{\ell,K}< \e$. Hence,
$$\|\Phi_{{H_u}}^\tau-\phi\|_{\ell,K}\leq \|\Phi_{{H_u}}^\tau-\phi_n\|_{\ell,K}+\|\phi_n-\phi\|_{\ell,K}< 2\e. $$
\end{proof}
\begin{definition}\noindent
\begin{itemize}
\item A smooth function $f\in \smooth(T^*M,\R)$ is said to be \emph{STAR (small-time approximately reachable)} if $e^{s \f{f}}\in \ov{\mathcal{R}_{\rm st}}$ for all $s\in \R$.
\item A smooth function $f\in \smooth(T^*M,\R)$ is said to be \emph{STAR at the level of densities} if $\rho_0\circ e^{s \f{f}}\in \ov{{R}_{\rm st}}(\rho_0)$ for every $\rho_0\in L^r(T^*M)$ and every $s\in \R$.
\end{itemize}
\end{definition}
\begin{rem}
If a function $f\in \smooth(T^*M,\R)$ is STAR then according to Lemma~\ref{lem5} it is also STAR at the level of densities.
\end{rem}
\begin{prop} \label{prop4}
The set of STAR functions is a Lie subalgebra of $\smooth(T^*M,\R)$.
\end{prop}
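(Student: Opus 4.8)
The plan is to verify the two defining properties of a Lie subalgebra: that the STAR functions form a linear subspace of $\smooth(T^*M,\R)$, and that they are closed under the Poisson bracket. Throughout I will exploit Proposition~\ref{prop3}, which tells us that $\ov{\mathcal{R}_{\rm st}}$ is a closed semigroup, together with Propositions~\ref{prop2} and~\ref{th2}, which transfer limits and products from the level of vector fields (equivalently, Hamiltonians) to the level of diffeomorphisms.

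For the linear subspace part, the obvious observation is that if $f$ is STAR then so is $\lambda f$ for any $\lambda\in\R$, since $e^{s\f{\lambda f}}=e^{(\lambda s)\f{f}}$ and the definition of STAR quantifies over all $s\in\R$. For closure under addition, suppose $f,g$ are STAR. For fixed $s$, I would like to write $e^{s\f{f+g}}=e^{s(\f{f}+\f{g})}$ as a limit of products of $e^{\f{sf}/n}$ and $e^{\f{sg}/n}$ via Proposition~\ref{th2} (the Lie product formula for vector fields): $e^{s\f{f}+s\f{g}}=\lim_{n\to\infty}(e^{s\f{f}/n}e^{s\f{g}/n})^n$. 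Each factor $e^{s\f{f}/n}=e^{(s/n)\f{f}}$ and $e^{(s/n)\f{g}}$ lies in $\ov{\mathcal{R}_{\rm st}}$ because $f,g$ are STAR; by Proposition~\ref{prop3}(i) every finite product $(e^{s\f{f}/n}e^{s\f{g}/n})^n$ lies in $\ov{\mathcal{R}_{\rm st}}$; and by Proposition~\ref{prop3}(ii) the limit, which is $e^{s\f{f+g}}$, also lies in $\ov{\mathcal{R}_{\rm st}}$. Since $s$ was arbitrary, $f+g$ is STAR. (One must check completeness hypotheses for Proposition~\ref{th2}: $\f{f}$, $\f{g}$ and $\f{f}+\f{g}=\f{f+g}$ should be complete; this is part of the standing assumptions or can be arranged, and in any case the extension to flows on compacts mentioned after the definition of $\ov{\mathcal{R}_{\rm st}}$ covers the general case.)

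For closure under the Poisson bracket, recall $\f{\{f,g\}}=[\f{f},\f{g}]$ up to sign. The standard commutator identity gives $e^{t[\f{f},\f{g}]}=\lim_{n\to\infty}\bigl(e^{\sqrt{t/n}\,\f{f}}e^{\sqrt{t/n}\,\f{g}}e^{-\sqrt{t/n}\,\f{f}}e^{-\sqrt{t/n}\,\f{g}}\bigr)^n$ (the commutator product formula), valid in the compact-open topology of $\Diff(T^*M)$ under the same kind of completeness assumptions, using Propositions~\ref{prop2} and~\ref{th2}; alternatively one can invoke Proposition~\ref{prop1} to control the pushforward expansion $(e^{X})_*Y$ and realize $e^{t[\f f,\f g]}$ as a limit of conjugation-type words in the flows of $\f f$ and $\f g$. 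The crucial point is that each letter appearing in these words is $e^{\pm\sqrt{t/n}\,\f{f}}$ or $e^{\pm\sqrt{t/n}\,\f{g}}$, and \emph{all} of these lie in $\ov{\mathcal{R}_{\rm st}}$ precisely because $f$ and $g$ are STAR, meaning we are allowed to flow along $\f f$ and $\f g$ for \emph{arbitrary} (in particular negative) times. Thus every word lies in $\ov{\mathcal{R}_{\rm st}}$ by Proposition~\ref{prop3}(i), and the limit $e^{t\f{\{f,g\}}}$ (up to sign, which is absorbed by quantifying over all $t$) lies in $\ov{\mathcal{R}_{\rm st}}$ by Proposition~\ref{prop3}(ii). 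Hence $\{f,g\}$ is STAR.

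The main obstacle — and the reason this proposition is more than a formality — is not the algebra but the completeness bookkeeping: Propositions~\ref{th2} and~\ref{prop2} require the relevant vector fields and their sums to be complete, which is not automatic for $\f{f}+\f{g}$ or for the commutator even when $\f f,\f g$ individually are. The clean way around this is to work, as the paper's remark after the definition of $\ov{\mathcal{R}_{\rm st}}$ indicates, with flows restricted to an exhausting family of compact sets and then apply the diagonal argument of Lemma~\ref{lem1}: on a fixed compact $K$ and for a fixed $\mathcal{C}^\ell$-accuracy, only finitely many flow segments for small times are involved, and Proposition~\ref{prop1} gives quantitative control of the error in the product/commutator formulas in the $\|\cdot\|_{\ell,K}$ seminorm, so the approximating words can be taken inside $\ov{\mathcal{R}_{\rm st}}$ with the required accuracy. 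Everything else is the direct application of the semigroup and closure properties already established in Proposition~\ref{prop3}.
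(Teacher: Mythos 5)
Your linear-subspace part matches the paper's argument exactly: scalar multiples are STAR by rescaling $s$, and closure under addition follows from Proposition~\ref{th2} (the Lie product formula) together with the semigroup and closedness properties of $\ov{\mathcal{R}_{\rm st}}$ from Proposition~\ref{prop3}.

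For closure under the Poisson bracket you take a genuinely different route. You invoke the group commutator product formula
$e^{t[\f{f},\f{g}]}=\lim_{n\to\infty}\bigl(e^{\sqrt{t/n}\,\f{f}}e^{\sqrt{t/n}\,\f{g}}e^{-\sqrt{t/n}\,\f{f}}e^{-\sqrt{t/n}\,\f{g}}\bigr)^n$
and note that every letter in each word lies in $\ov{\mathcal{R}_{\rm st}}$ because $f,g$ being STAR allows flowing in \emph{both} time directions, then conclude by Proposition~\ref{prop3}. The paper instead uses a conjugation argument: it writes
$e^{\tau\f f}e^{\frac1\tau\f g}e^{-\tau\f f}=\exp\bigl(\frac1\tau\f g+\f{\{f,g\}}+\f{w(\tau)}\bigr)$
via the pushforward identity \eqref{prop:conjugation} and the quantitative expansion in Proposition~\ref{prop1}, uses the STAR property of $g$ together with Proposition~\ref{th2} to cancel the $\frac1\tau\f g$ term, and lets $\tau\to0$ using Proposition~\ref{prop2}. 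Both arguments are correct, and both crucially use bidirectional access to the flows of $\f f$ and $\f g$, which you correctly identify as the heart of the matter. The trade-off is that the paper's route is entirely self-contained in the stated toolbox (Propositions~\ref{prop1}, \ref{prop2}, \ref{th2}, \ref{prop3}), whereas the group commutator formula is a separate result; your parenthetical claim that it follows ``using Propositions~\ref{prop2} and~\ref{th2}'' is not accurate --- the Lie product formula alone controls first-order cancellation, not the second-order terms that produce the commutator, and what actually supplies that control is precisely the expansion estimate of Proposition~\ref{prop1}. You do hedge by mentioning the alternative via Proposition~\ref{prop1}, which is essentially the paper's argument; if you want to keep the group commutator route as primary, you should either cite an independent reference for it or derive it from Proposition~\ref{prop1}, rather than attribute it to Propositions~\ref{prop2} and~\ref{th2}.
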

\begin{proof}
The fact that if $f$ is STAR and $s\in\R$ then $sf$ is STAR is obvious, and the fact that $f+g$ is STAR if $f$ and $g$ are STAR is a direct consequence of Propositions~\ref{th2} and \ref{prop3}.
 
 Let us assume that $f,g$ are STAR and prove that $\left\{f,g\right\}$ is STAR. 
 According to Proposition~\ref{prop3}, 
$$e^{\tau \f{f}}e^{\frac{1}{\tau}\f{g}}e^{-\tau \f{f}} \in \ov{\mathcal{R}_{\rm st}} \qquad \forall \: \tau >0.$$
  According to Proposition~\ref{prop1},
\begin{align*}
    e^{\tau \f{f}}e^{\frac{1}{\tau}\f{g}}e^{-\tau \f{f}} &= \exp(\frac{1}{\tau}({e^{\tau \f{f}}})_*\f{g}) 
    =\exp(\frac{1}{\tau}\f{g}+ \f{\left\{f,g\right\}}+\f{w(\tau)}),
\end{align*}
where, for every $\ell \in \N$ and $K \subset T^*M$ compact, 
$\lim_{\tau\to0}\|\f{w(\tau)}\|_{\ell,K}=0$.
Since ${g}$ is STAR, applying Proposition~\ref{th2} we have
$$e^{\f{\left\{f,g\right\}}+\f{w(\tau)}} \in \ov{\mathcal{R}_{\rm st}} \qquad \forall \: \tau >0.$$
Finally, $e^{\f{\left\{f,g\right\}}+\f{w(\tau)}}\to e^{\f{\left\{f,g\right\}}}$ as $\tau \rightarrow 0$ in the compact-open topology by Proposition~\ref{prop2}
and we conclude thanks to point (ii) of Proposition~\ref{prop3}. 
\end{proof}

\begin{prop} \label{prop6}
    For a mechanical Hamiltonian of the form \eqref{eq:mechanical-hamiltonian}, $V_1,\dots,V_m$ are STAR.
\end{prop}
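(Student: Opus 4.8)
The plan is to realise each flow $e^{s\f{V_j}}$ as a limit, in the compact-open topology, of genuine small-time reachable diffeomorphisms obtained by switching on a single control with large amplitude over a short time. Fix $j\in\{1,\dots,m\}$ and $s\in\R$. For $\tau>0$, let $u^\tau\in\PWC([0,\tau],\R^m)$ be the constant control whose $j$-th component equals $s/\tau$ and whose other components vanish. Then $H_{u^\tau}=H_0+\frac{s}{\tau}V_j$ is again a mechanical Hamiltonian of the form \eqref{eq:mechanical-hamiltonian} associated with a value of the control in $\R^m$, so $\f{H_{u^\tau}}$ is complete and the time-$\tau$ flow $\Phi^\tau_{H_{u^\tau}}$ is well defined. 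Since the time-$\tau$ flow of $\f{H_{u^\tau}}$ is the time-$1$ flow of $\tau\f{H_{u^\tau}}$, and $\tau\f{H_{u^\tau}}=\tau\f{H_0}+s\f{V_j}$ (the factor $\tau$ absorbing the amplitude $s/\tau$), we obtain
\[
\Phi^\tau_{H_{u^\tau}}=e^{\tau\f{H_{u^\tau}}}=e^{\tau\f{H_0}+s\f{V_j}}.
\]
This diffeomorphism is reachable in time $\tau$, hence it is an admissible competitor in the definition of $\ov{\mathcal{R}_{\rm st}}$ as soon as $\tau$ is small.

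Next I would let $\tau\to0^+$. In the compact-open topology of $\Ve(T^*M)$ one clearly has $\tau\f{H_0}+s\f{V_j}\to s\f{V_j}$. The limiting field is complete: since $V_j$ depends only on $q$, $\f{V_j}=-\sum_{k=1}^d(\partial_{q_k}V_j)\,\partial_{p_k}$, whose flow is the globally defined shear $(q,p)\mapsto(q,\,p-t\,s\,\nabla_qV_j(q))$. Moreover, for each $\tau>0$ the approximating field $\tau\f{H_0}+s\f{V_j}=\tau\f{H_{u^\tau}}$ is complete as well, being a positive scalar multiple of the complete field $\f{H_{u^\tau}}$. Proposition~\ref{prop2} then yields $\Phi^\tau_{H_{u^\tau}}=e^{\tau\f{H_0}+s\f{V_j}}\to e^{s\f{V_j}}$ in the compact-open topology of $\Diff(T^*M)$ as $\tau\to0$.

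It then remains only to unwind the definition of $\ov{\mathcal{R}_{\rm st}}$: given $\e>0$, a compact $K\subset T^*M$ and $\ell\in\N$, it suffices to choose $\tau\in(0,\e]$ small enough that $\|\Phi^\tau_{H_{u^\tau}}-e^{s\f{V_j}}\|_{\mathcal{C}^\ell(K)}<\e$, which is possible by the convergence just established. Hence $e^{s\f{V_j}}\in\ov{\mathcal{R}_{\rm st}}$; alternatively one may note that $\Phi^{\tau_n}_{H_{u^{\tau_n}}}\in\ov{\mathcal{R}_{\rm st}}$ along any sequence $\tau_n\to0$ and invoke the closedness of $\ov{\mathcal{R}_{\rm st}}$ from Proposition~\ref{prop3}(ii). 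Since $s\in\R$ and $j\in\{1,\dots,m\}$ are arbitrary, $V_1,\dots,V_m$ are STAR.

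I do not expect a genuine obstacle in this argument, which is essentially a one-step rescaling trick; the points that deserve a line of care are the completeness bookkeeping needed to invoke Proposition~\ref{prop2} --- the limit field $s\f{V_j}$ and every approximating field $\tau\f{H_{u^\tau}}$ must be complete, which is precisely why one writes the control amplitude as $s/\tau$ rather than, say, rescaling the kinetic term --- and the observation that the horizon $\tau$ used to normalise the control is the same small parameter driving the compact-open convergence, so that a short time and a good approximation are obtained simultaneously. Note, finally, that negative values of $s$ need no separate treatment, since the control amplitude $s/\tau$ is unconstrained in sign.
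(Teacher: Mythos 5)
Your argument is essentially identical to the paper's proof: you use the same rescaling trick (constant control $u_j=s/\tau$, others zero), observe that $\Phi^\tau_{H_{u^\tau}}=e^{\tau\f{H_0}+s\f{V_j}}$ is reachable in time $\tau$, and invoke Proposition~\ref{prop2} to pass to the limit $\tau\to0$. The extra care you take with completeness and the explicit appeal to the closedness in Proposition~\ref{prop3}(ii) are fine but not needed beyond what the paper already records.
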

\begin{proof}
Let $i\in \llbracket 1,m \rrbracket$ and $s \in \R$. Consider the constant control $u=(u_1,\dots,u_m)$ such that $u_i=\frac{s}{\tau}$ for $\tau >0$ and $u_j=0$ for $j\neq i$. Then $\Phi_{H_u}^\tau$ is reachable in time $\tau$ and, according to Proposition~\ref{prop2},
$$\Phi_{H_u}^\tau=e^{\tau\f{H_0}+s\f{V_i}}\underset{\tau \rightarrow 0}{\longrightarrow}e^{s\f{V_i}}.$$
\end{proof}

\subsection{Some properties of $\ov{R_{\rm st}}(\rho_0)$}

\begin{lemma} \label{lem3}
    Let $\phi \in \Diff(T^*M)$ be such that $|\det J_{\phi}|\equiv 1$, where $J_{\phi}$ denotes the Jacobian matrix of $\phi$. If $\ro \circ \phi \in \ov{R_{\rm st}}(\ro)$ for every $\ro \in \mathcal{C}_c^{\infty}(T^*M)$, then $\ro \circ \phi \in \ov{R_{\rm st}}(\ro)$ for every $\ro \in L^r(T^*M)$. 
\end{lemma}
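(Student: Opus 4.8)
The plan is to use a density argument combined with the continuity of the right-composition map $\rho \mapsto \rho \circ \phi$ on $L^r$, together with the semigroup/closure structure of $\ov{R_{\rm st}}(\rho_0)$ that is implicit in its definition. First I would fix $\rho \in L^r(T^*M)$ and, given $\e > 0$, pick $\tilde\rho \in \mathcal{C}_c^\infty(T^*M)$ with $\|\tilde\rho - \rho\|_{L^r} < \e$. The key point is that the change of variables $\phi$ has $|\det J_\phi| \equiv 1$, so right-composition by $\phi$ is an $L^r$-isometry: $\|\sigma \circ \phi\|_{L^r} = \|\sigma\|_{L^r}$ for all $\sigma \in L^r(T^*M)$. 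Hence $\|\tilde\rho \circ \phi - \rho \circ \phi\|_{L^r} = \|\tilde\rho - \rho\|_{L^r} < \e$.

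The next step is to transport the hypothesis applied at $\tilde\rho$ back to $\rho$. By assumption $\tilde\rho \circ \phi \in \ov{R_{\rm st}}(\tilde\rho)$, so for every $\varepsilon'>0$ there exist a time $\tau \in [0,\varepsilon']$ and a control $u(\cdot) \in \PWC([0,\tau],\R^m)$ with $\|\rho(\tau;u(\cdot),\tilde\rho) - \tilde\rho \circ \phi\|_{L^r} < \varepsilon'$. Now I use that solutions of \eqref{eq:liouville} are given by right-composition with a Hamiltonian diffeomorphism, $\rho(\tau;u(\cdot),\sigma) = \sigma \circ \Phi^\tau_{H_u}$, and that $\Phi^\tau_{H_u}$ is itself volume-preserving, so the solution map $\sigma \mapsto \rho(\tau;u(\cdot),\sigma)$ is again an $L^r$-isometry. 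Therefore
\[
\|\rho(\tau;u(\cdot),\rho) - \rho(\tau;u(\cdot),\tilde\rho)\|_{L^r} = \|\rho - \tilde\rho\|_{L^r} < \e.
\]
Combining the three estimates via the triangle inequality gives
\[
\|\rho(\tau;u(\cdot),\rho) - \rho \circ \phi\|_{L^r} \leq \e + \varepsilon' + \e = 2\e + \varepsilon'.
\]

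Finally, I would take $\e$ and $\varepsilon'$ arbitrarily small: given any target accuracy $\delta > 0$ and any time horizon $\eta > 0$, choosing $\e = \delta/3$ and $\varepsilon' = \min(\delta/3, \eta)$ produces a time $\tau \in [0,\eta]$ and a piecewise constant control steering $\rho$ to within $\delta$ of $\rho \circ \phi$ in $L^r$. This is exactly the statement $\rho \circ \phi \in \ov{R_{\rm st}}(\rho)$.

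I do not expect a serious obstacle here: the whole argument rests on the single observation that every map in sight (the change of variables $\phi$, the Liouville flow $\Phi^\tau_{H_u}$, and the smooth approximation step) is an $L^r$-isometry because the relevant Jacobian determinants have absolute value one, so approximation errors are not amplified by composition. The only mild care needed is to make sure the time $\tau$ furnished by the hypothesis at $\tilde\rho$ can be taken inside the prescribed small interval $[0,\eta]$, which is immediate from the definition of $\ov{R_{\rm st}}$ since that definition already quantifies over arbitrarily small times. One should also note, as the paper does elsewhere, that $\mathcal{C}_c^\infty(T^*M)$ is dense in $L^r(T^*M)$ for $r \in [1,\infty)$, which is what makes the first step legitimate.
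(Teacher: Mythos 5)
Your proof is correct and is essentially identical to the paper's: both approximate $\rho$ by a smooth compactly supported $\tilde\rho$, invoke the hypothesis at $\tilde\rho$, and close the argument with a three-term triangle inequality using that right-composition by $\phi$ and by $\Phi^\tau_{H_u}$ are $L^r$-isometries (since both have unit Jacobian determinant). The only superficial difference is that you write the solution as $\rho(\tau;u(\cdot),\cdot)$ while the paper uses $(\cdot)\circ\Phi^\tau_{H_u}$ directly, and you spell out the bookkeeping of the small time horizon a little more explicitly.
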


\begin{proof}
 Let $\e >0$ and $\ro \in L^r(T^*M)$. There exist $\ro_0 \in \mathcal{C}^{\infty}_c(T^*M)$, $\tau\in[0,\e]$, and $u(\cdot) \in \PWC([0,\tau],\R^m)$
 such that $\|\ro-\ro_0\|_{L^r}< \e$ and 
$\|\ro_0\circ \Phi_{H_u}^\tau-\ro_0 \circ \phi\|_{L^r} < \e.$ Then 
\begin{align*}
    \|\ro \circ \Phi_{H_u}^\tau-\ro \circ \phi\|_{L^r}&\leq \|(\ro-\ro_0) \circ \Phi_{H_u}^\tau\|_{L^r} + \|\ro_0 \circ \Phi_{H_u}^\tau-\ro_0\circ \phi\|_{L^r}+\|(\ro-\ro_0)\circ \phi\|_{L^r} \\
    &= 2 \|\ro-\ro_0\|_{L^r}+\|\ro_0 \circ \Phi_{H_u}^\tau-\ro_0\circ \phi\|_{L^r} 
    < 3\e. 
\end{align*}
\end{proof}
\begin{lemma} \label{lem4}
\begin{itemize}
   \item Let $\phi,\psi \in \Diff(T^*M)$ be such that $|\det J_{\phi}|\equiv 1\equiv |\det J_{\psi}|$. If $\ro \circ \phi \in \ov{R_{\rm st}}(\ro)$ and $\ro \circ \psi \in \ov{R_{\rm st}}(\ro)$ for every $\ro \in L^r(T^*M)$, 
    then $\ro \circ \phi \circ \psi \in \ov{R_{\rm st}}(\ro)$ for every $\ro \in L^r(T^*M)$.
    \item If $(\rho_n)_n\subset \overline{R_{\rm st}}(\rho)$ and $\rho_n\to \rho_\infty$ in $L^r(T^*M)$, then $\rho_\infty\in \overline{R_{\rm st}}(\rho)$.
    \end{itemize}
\end{lemma}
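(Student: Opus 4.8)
The plan is to prove the two items separately, by short triangle‑inequality arguments that mirror the proof of Proposition~\ref{prop3}, the only extra ingredient being that right‑composition with a measure‑preserving map is an $L^r$‑isometry: if $|\det J_\psi|\equiv 1$ then also $|\det J_{\psi^{-1}}|\equiv 1$, and the change of variables $y=\psi(x)$ gives $\|f\circ\psi\|_{L^r}=\|f\|_{L^r}$ for every $f\in L^r(T^*M)$. In this sense the lemma is the density‑level counterpart of Proposition~\ref{prop3}, with the hypotheses $|\det J_\phi|\equiv 1\equiv|\det J_\psi|$ playing the role that the group structure played there.

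For the first item, fix $\rho\in L^r(T^*M)$ and $\e>0$. First I would invoke $\rho\circ\phi\in\overline{R_{\rm st}}(\rho)$ to get $\tau_1\in[0,\e/2]$ and $v\in\PWC([0,\tau_1],\R^m)$ with $\|\rho\circ\Phi^{\tau_1}_{H_v}-\rho\circ\phi\|_{L^r}<\e/2$. Then, setting $\sigma:=\rho\circ\Phi^{\tau_1}_{H_v}\in L^r(T^*M)$ (it lies in $L^r$ since Hamiltonian flows are measure‑preserving), I would apply the hypothesis $\sigma\circ\psi\in\overline{R_{\rm st}}(\sigma)$ to obtain $\tau_2\in[0,\e/2]$ and $u\in\PWC([0,\tau_2],\R^m)$ with $\|\sigma\circ\Phi^{\tau_2}_{H_u}-\sigma\circ\psi\|_{L^r}<\e/2$. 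Concatenating the controls as ``$u$ on $[0,\tau_2]$, then $v$ on $(\tau_2,\tau_1+\tau_2]$'', exactly as in the proof of Proposition~\ref{prop3}(i), produces $w\in\PWC([0,\tau_1+\tau_2],\R^m)$ with $\tau_1+\tau_2\le\e$ and $\Phi^{\tau_1+\tau_2}_{H_w}=\Phi^{\tau_1}_{H_v}\circ\Phi^{\tau_2}_{H_u}$, hence $\rho\circ\Phi^{\tau_1+\tau_2}_{H_w}=\sigma\circ\Phi^{\tau_2}_{H_u}$. I would then bound
\[
\|\rho\circ\Phi^{\tau_1+\tau_2}_{H_w}-\rho\circ\phi\circ\psi\|_{L^r}
\le \|\sigma\circ\Phi^{\tau_2}_{H_u}-\sigma\circ\psi\|_{L^r}+\|(\rho\circ\Phi^{\tau_1}_{H_v}-\rho\circ\phi)\circ\psi\|_{L^r},
\]
where the first term is $<\e/2$ by construction and the second equals $\|\rho\circ\Phi^{\tau_1}_{H_v}-\rho\circ\phi\|_{L^r}<\e/2$ by the isometry remark applied to $\psi$; this gives $\rho\circ\phi\circ\psi\in\overline{R_{\rm st}}(\rho)$.

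For the second item, given $\e>0$ I would pick $n$ with $\|\rho_n-\rho_\infty\|_{L^r}<\e/2$, then use $\rho_n\in\overline{R_{\rm st}}(\rho)$ to obtain $\tau\in[0,\e]$ and $u\in\PWC([0,\tau],\R^m)$ with $\|\rho\circ\Phi^\tau_{H_u}-\rho_n\|_{L^r}<\e/2$, and conclude by the triangle inequality, so that $\rho_\infty\in\overline{R_{\rm st}}(\rho)$. I do not expect a genuine obstacle in this lemma; the only points requiring a little care are keeping the order of composition straight when concatenating controls (the later‑applied flow is the left factor, as in Proposition~\ref{prop3}), and recording the one‑line change of variables that shows right‑composition with $\psi$ (or $\psi^{-1}$) preserves the $L^r$‑norm.
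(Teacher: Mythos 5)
Your proof is correct and follows exactly the route the paper has in mind: the authors omit the proof with the remark that it ``follows the same arguments of Proposition~\ref{prop3}'', and your argument is the density-level translation of that proof, with the $C^{\ell+1}$-bound on $\Phi^{\tau_v}_{H_v}$ from Proposition~\ref{prop3}(i) correctly replaced by the observation that right-composition with a diffeomorphism of unit Jacobian determinant is an $L^r$-isometry. You also correctly exploit that the hypothesis holds for \emph{every} density, which is needed when applying it to the intermediate state $\sigma$ — a subtlety that has no analogue in Proposition~\ref{prop3}(i) — and the second item is the straightforward triangle inequality as you say.
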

The proof follows the same arguments of Proposition~\ref{prop3} and is omitted. 


We conclude this section by proving that, as already announced, the small-time approximate controllability in ${\rm DHam}(T^*M)$ of the Hamilton equation \eqref{eq:hamilton} implies the small-time approximate controllability in $L^r(T^*M,\R)$ of the Liouville equation \eqref{eq:liouville} (cf. Lemma~\ref{lem5}).
\begin{proof}[Proof of Lemma~\ref{lem5}] According to Lemma~\ref{lem3}, it is sufficient to prove the result for $\ro_0 \in \mathcal{C}^{\infty}_c(T^*M)$.
Denote the support of $\ro_0$ by $K$. For $x \in K$ and $n\in \N^*$ denote the sphere of center $\Psi^{-1}(x)$ and radius $\frac{1}{n}$ by $S(\Psi^{-1}(x),\frac{1}{n})$. The distance between $x$ and the image of the previous sphere by $\Psi$ is strictly positive, i.e. ,  $d(x,\Psi(S(\Psi^{-1}(x),{1}/{n}))) > 0$. By compactness, the minimum of the previous distance as $x$ varies in $K$ has to be positive:
$$\delta_n:=\min_{x\in K}d\left(x,\Psi\left(S\left(\Psi^{-1}(x),\frac{1}{n}\right)\right)\right) > 0.$$
Moreover $\delta_n\to 0$ as $n\to +\infty$. 
Notice that if $x\in K$ and $y\in T^*M$ satisfy $d(x,y)<\delta_n$ then $$d(\Psi^{-1}(x),\Psi^{-1}(y))<\frac{1}{n}.$$
Fix a compact neighborhood $\tilde K$ of $K$. 
Since $\Psi\in \ov{\mathcal{R}_{\rm st}}$, for every $n\in \N$ there exist $\tau_n \in [0,1/n]$ and 
$u_n(\cdot)\in \PWC([0,\tau_n],\R^m)$ such that 
$$\|\Phi_{H_{u_n}}^{\tau_n}-\Psi\|_{0,\tilde K
}< \delta_n.$$
Then, in particular,  $d((\Phi_{H_{u_n}}^{\tau_n})^{-1}(x),\Psi^{-1}(x))<1/n$  for all $x \in K$, which implies that $(\Phi_{H_{u_n}}^{\tau_n})^{-1}(K)$ is contained in $\tilde K$ for $n$ large enough. 
\begin{align*}
    \int_{T^*M}|\ro_0 \circ \Phi_{H_{u_n}}^{\tau_n}-\ro_0 \circ \Psi|^r&=\int_{\tilde{K}}|\ro_0 \circ \Phi_{H_{u_n}}^{\tau_n}-\ro_0 \circ \Psi|^r 
    \leq \|D\ro_0\|_{0,K}^r\int_{\tilde{K}}d(\Phi_{H_{u_n}}^{\tau_n}(x),\Psi(x))^rdx 
    \\&
    \leq \|D\ro_0\|_{0,K}^r\Vol(\tilde{K})
    \delta_n^r. 
\end{align*}
\end{proof}
\begin{rem}\label{rem:lem5}
In the proof of Lemma~\ref{lem5} we actually proved that, if a sequence $\phi_n$ converges to $\phi$ in ${\rm Diff}(T^*M)$ for the compact-open topology and $\rho_0\in L^r(T^*M)$, $r\in[1,\infty)$, then $\rho_0\circ \phi_n\to \rho_0\circ \phi$ in $L^r(T^*M)$.
\end{rem}
In analogy to Proposition \ref{prop4}, we have the following property.
\begin{prop} \label{prop-densities}
The set of STAR functions at the level of the densities is a Lie subalgebra of $\smooth(T^*M,\R)$.
\end{prop}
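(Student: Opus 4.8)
The plan is to mirror the proof of Proposition~\ref{prop4}, replacing the use of Propositions~\ref{th2}, \ref{prop2}, and \ref{prop3} by their ``level of densities'' analogues, namely Propositions~\ref{th2}, \ref{prop2} together with Remark~\ref{rem:lem5}, and Lemma~\ref{lem4}. Concretely: let $f,g$ be STAR at the level of densities. First I would check closure under scalar multiples, which is immediate since $e^{s\f{(\lambda f)}}=e^{(s\lambda)\f{f}}$. Next, for the sum, fix $\rho_0\in L^r(T^*M)$ and $s\in\R$; by Proposition~\ref{th2}, $(e^{\frac{s}{n}\f{f}}e^{\frac{s}{n}\f{g}})^n\to e^{s(\f{f}+\f{g})}=e^{s\f{(f+g)}}$ in the compact-open topology of $\Diff(T^*M)$, hence, by Remark~\ref{rem:lem5}, $\rho_0\circ(e^{\frac{s}{n}\f{f}}e^{\frac{s}{n}\f{g}})^n\to \rho_0\circ e^{s\f{(f+g)}}$ in $L^r(T^*M)$. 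Each approximating term lies in $\overline{R_{\rm st}}(\rho_0)$ by applying the first point of Lemma~\ref{lem4} repeatedly (note $e^{\f h}$ is a Hamiltonian diffeomorphism, so $|\det J|\equiv 1$), and then the second point of Lemma~\ref{lem4} gives $\rho_0\circ e^{s\f{(f+g)}}\in\overline{R_{\rm st}}(\rho_0)$. Since $\rho_0$ and $s$ were arbitrary, $f+g$ is STAR at the level of densities.

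For the bracket, I would reproduce verbatim the Poisson-bracket argument of Proposition~\ref{prop4}. For $\tau>0$ one has, by Proposition~\ref{prop1},
\[
e^{\tau\f f}e^{\frac1\tau\f g}e^{-\tau\f f}=\exp\!\Big(\tfrac1\tau({e^{\tau\f f}})_*\f g\Big)=\exp\!\Big(\tfrac1\tau\f g+\f{\{f,g\}}+\f{w(\tau)}\Big),
\]
with $\|\f{w(\tau)}\|_{\ell,K}\to0$ as $\tau\to0$ for every $\ell\in\N$ and every compact $K$. The diffeomorphism $e^{\tau\f f}e^{\frac1\tau\f g}e^{-\tau\f f}$ acts on densities in $\overline{R_{\rm st}}(\rho_0)$ by the first point of Lemma~\ref{lem4} (it is a composition of three maps of the form $\rho\mapsto\rho\circ e^{s\f h}$ with $h\in\{f,g\}$), so $\rho_0\circ e^{\tau\f f}e^{\frac1\tau\f g}e^{-\tau\f f}\in\overline{R_{\rm st}}(\rho_0)$ for all $\tau>0$. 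Since $g$ is STAR at the level of densities, the same Lie-product-formula argument as above (Proposition~\ref{th2} plus Remark~\ref{rem:lem5} plus the two points of Lemma~\ref{lem4}) shows that $\rho_0\circ e^{\f{\{f,g\}}+\f{w(\tau)}}\in\overline{R_{\rm st}}(\rho_0)$ for every $\tau>0$: indeed $e^{\f{\{f,g\}}+\f{w(\tau)}}$ equals $e^{-\f g}\cdot(e^{\f g}e^{\f{\{f,g\}}+\f{w(\tau)}})$ up to the commutator correction, but more directly one writes it as a limit of products involving $e^{s\f g}$ and the already-reachable $e^{\tau\f f}e^{\frac1\tau\f g}e^{-\tau\f f}$, exactly as in Proposition~\ref{prop4}. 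Finally, letting $\tau\to0$, Proposition~\ref{prop2} gives $e^{\f{\{f,g\}}+\f{w(\tau)}}\to e^{\f{\{f,g\}}}$ in the compact-open topology, hence by Remark~\ref{rem:lem5} $\rho_0\circ e^{\f{\{f,g\}}+\f{w(\tau)}}\to\rho_0\circ e^{\f{\{f,g\}}}$ in $L^r$, and the second point of Lemma~\ref{lem4} yields $\rho_0\circ e^{\f{\{f,g\}}}\in\overline{R_{\rm st}}(\rho_0)$. Replacing $f$ by $sf$ shows $e^{s\f{\{f,g\}}}$ acts reachably as well, so $\{f,g\}$ is STAR at the level of densities.

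The only genuine subtlety — and the step I expect to require the most care — is the passage from convergence of diffeomorphisms in the compact-open topology to convergence of the pulled-back densities in $L^r(T^*M)$ when $T^*M$ is non-compact; this is precisely what Remark~\ref{rem:lem5} supplies (the support of a $\mathcal{C}^\infty_c$ density stays inside a fixed compact under small perturbations of the map, then Lemma~\ref{lem3} extends to all of $L^r$). Everything else is a mechanical transcription of Propositions~\ref{prop3}--\ref{prop4} with ``$\overline{\mathcal{R}_{\rm st}}$'' replaced by ``$\overline{R_{\rm st}}(\rho_0)$ for all $\rho_0$'' and the semigroup/closedness properties of Proposition~\ref{prop3} replaced by Lemma~\ref{lem4}. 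I would therefore keep the write-up short, stating that the proof follows that of Proposition~\ref{prop4} \emph{mutatis mutandis}, using Lemma~\ref{lem4} and Remark~\ref{rem:lem5} in place of Proposition~\ref{prop3} and Proposition~\ref{prop2} at the level of densities, and spelling out only the bracket step.
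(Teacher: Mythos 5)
Your proof is correct and is exactly what the paper has in mind: the paper states Proposition~\ref{prop-densities} without proof, saying only that it holds ``in analogy to Proposition~\ref{prop4},'' and your write-up is a faithful and complete spelling-out of that analogy, substituting Lemma~\ref{lem4} for Proposition~\ref{prop3} and invoking Remark~\ref{rem:lem5} to transfer compact-open convergence of diffeomorphisms to $L^r$-convergence of pulled-back densities. The one slightly muddled sentence (``$e^{\f{\{f,g\}}+\f{w(\tau)}}$ equals $e^{-\f g}\cdot(\dots)$ up to the commutator correction'') should simply be dropped in favor of the Lie-product-formula argument you give right after it, which is the correct mechanism.
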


\section{Vertical and horizontal shears}\label{sec:5}
We introduce two abelian subgroups of ${\rm DHam}(T^*M)$ which shall play a key role.
\begin{definition}[Vertical and horizontal shears on $M=\R^d$ or $\T^d$]
\noindent
\begin{itemize}
\item For $f\in \smooth(\R^d_q,\R)$ or $\smooth(\T^d_q,\R)$, the Hamiltonian diffeomorphism 
$$e^{\f{f}}(q,p)=(q,p-\nabla_q f(q)),$$
is called a \emph{vertical shear}. Vertical shears form an abelian subgroup of ${\rm DHam}(T^*M)$, denoted by $\mathcal{V}$.
\item For $g\in \smooth(\R^d_p,\R)$, 
the Hamiltonian diffeomorphism 
$$e^{\f{g}}(q,p)=(q+\nabla_pg(p),p),$$
is called a \emph{horizontal shear}. Horizontal shears form an abelian subgroup of ${\rm DHam}(T^*M)$, denoted by $\mathcal{T}$.
\end{itemize}
\end{definition}
Berger and Turaev recently proved the following useful density property.
\begin{theorem}(\cite[Corollary 1.1]{berger}) \label{th1}
Let $M=\R^d$ or $\T^d$ and $\Phi \in \DHa(T^*M)$. Then, for every $\varepsilon>0$, $\ell\in \N$, and $K\subset T^*M$ compact there exist $N\in\N$ and $S_1,\dots,S_N\in \mathcal{V}\cup \mathcal{T}$ such that 
$$\|\Phi-S_1\cdots S_N\|_{\ell,K}< \varepsilon. $$
\end{theorem}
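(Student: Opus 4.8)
This is the density statement of Berger and Turaev~\cite{berger}; here is the line of attack one would follow, which also meshes with the tools of Section~\ref{sec:propertiesVF}. The plan is to combine a Lie-algebraic generation step with a saturation argument, the latter carried out here in the \emph{drift-free} situation, which is the favourable one.

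\emph{Reduction to near-identity flows.} First I would fix $\ell\in\N$ and a compact $K\subset T^*M$ and, by Lemma~\ref{lem1}, reduce to approximating $\Phi$ in the $C^\ell(K)$ topology. Writing $\Phi$ as the time-one map of a Hamiltonian isotopy generated by a time-dependent $t\mapsto H_t$, and truncating $H_t$ outside a large compact (which leaves the isotopy unchanged on $K$), one may assume each $H_t$ is compactly supported; discretising the time interval then exhibits $\Phi$ as a $C^\ell(K)$ limit of compositions of short-time flows $e^{\f H}$ with $H\in\mathcal{C}_c^\infty(T^*M)$ --- a standard consequence of the continuous dependence of flows on their generators, in the spirit of Propositions~\ref{prop2} and~\ref{th2}. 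It therefore suffices to show that every such $e^{\f H}$ belongs to the closure $G$ of the subgroup generated by $\mathcal{V}\cup\mathcal{T}$, taken in the compact-open topology; note that $G$ is a group (closures of subgroups of the topological group $\Diff(T^*M)$ are subgroups) and $G\subseteq\DHa(T^*M)$ since the latter is $C^1$-closed.

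\emph{Lie-algebraic generation.} Next I would let $\mathfrak g_0\subset\smooth(T^*M,\R)$ be the linear span of the functions depending on $q$ only and of those depending on $p$ only, whose Hamiltonian flows are exactly the shears of $\mathcal{V}$ and $\mathcal{T}$ (and are complete, as vertical, resp.\ horizontal, shears fix $q$, resp.\ $p$), and show that the Poisson--Lie algebra $\mathfrak g$ generated by $\mathfrak g_0$ is dense in $\smooth(T^*M,\R)$ for the $C^\ell$-on-compacts topology. The computational core is that, up to sign, $\{f(q),g(p)\}=\sum_j\partial_{q_j}f\,\partial_{p_j}g$, whence $\{q_i^2/2,\,p_ip_j\}=\pm q_ip_j$; more generally, brackets of a monomial in $q$ with a monomial in $p$ sharing exactly one variable yield single mixed monomials, and the ``mass-transfer'' brackets $\{\cdot,q_j^2/2\}$ and $\{\cdot,p_j^2/2\}$ then allow one to reach every monomial $q^\alpha p^\beta$ (when $M=\R^d$), respectively every product of a trigonometric monomial in $q$ with a monomial in $p$ (when $M=\T^d$); the $C^\ell$-on-compacts closure of their span is $\smooth(T^*M,\R)$. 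To keep every intermediate vector field complete one would work throughout with compactly supported representatives, at the harmless cost of errors supported away from the compact under consideration.

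\emph{Saturation, and the main obstacle.} Finally, setting $\mathfrak s=\{h\in\smooth(T^*M,\R)\mid e^{t\f h}\in G\ \forall\,t\in\R\}$, I would argue as in the proof of Proposition~\ref{prop4}, now simplified by the absence of a drift: $e^{-\tau\f f}\in G$ is directly available, so the commutator identity $e^{\tau\f f}e^{\frac1\tau\f g}e^{-\tau\f f}\to e^{\f g+\f{\{f,g\}}}$ (modulo lower-order terms, via Proposition~\ref{prop1}), followed by Propositions~\ref{th2} and~\ref{prop2}, goes through unchanged and shows that $\mathfrak s$ is a linear subspace of $\smooth(T^*M,\R)$, closed under the Poisson bracket and closed for the $C^\ell$-on-compacts topology. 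Since $\mathfrak s\supseteq\mathfrak g_0$ it contains $\mathfrak g$, hence also $\ov{\mathfrak g}=\smooth(T^*M,\R)$; thus $e^{\f H}\in G$ for every $H\in\mathcal{C}_c^\infty(T^*M)$, and the reduction step gives $G=\DHa(T^*M)$. The main obstacle is this last part in the infinite-dimensional setting: one must guarantee completeness of all vector fields produced along the way (handled by the compact-support convention), control the uniformity in the estimates of Proposition~\ref{prop1}, and, above all, legitimately pass from density of the generated Lie algebra to density of the generated group --- which is sound precisely because $\mathcal{V}$ and $\mathcal{T}$ are abelian subgroups containing $e^{\pm t\f f}$ for every $t$, so that no drift-induced asymmetry appears and the scheme of Proposition~\ref{prop4} transfers verbatim.
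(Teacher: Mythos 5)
The paper does not prove this theorem: it is imported verbatim from Berger and Turaev's work \cite{berger} and used as a black box (via Corollary~\ref{cor:berger}). So there is no in-paper proof to compare against, and the relevant question is whether your Lie-algebraic scheme is a sound independent argument. The overall architecture (reduce to compactly supported autonomous flows by discretising an isotopy, generate the Poisson algebra from functions of $q$ alone and of $p$ alone, then saturate as in Proposition~\ref{prop4}) is a reasonable and standard plan, and the absence of a drift does indeed remove the directional asymmetry that the body of the paper has to fight.

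However, as written, the concluding step contains a genuine error. You define $\mathfrak s=\{h\mid e^{t\f h}\in G\ \forall t\in\R\}$ and conclude $\mathfrak s\supseteq\ov{\mathfrak g}=\smooth(T^*M,\R)$. This cannot be right: membership in $\mathfrak s$ presupposes that $\f h$ is complete, and the compact-open closure $\ov{\mathfrak g}$ contains plenty of Hamiltonians with incomplete flows (already $|p|^4$ on $T^*\R^d$). The appeal to ``working with compactly supported representatives'' is the correct instinct, but it does not commute with your argument as stated: the Poisson algebra generated by $\mathfrak g_0$ (functions of $q$ or of $p$) is made of polynomials, respectively trigonometric$\times$polynomials, and these are not compactly supported; truncating a bracket such as $\{f(q),g(p)\}$ does not yield an element of the generated Lie algebra, so the saturation step no longer produces a function known to lie in $\mathfrak s$. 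To repair this one must instead take the \emph{generators} to be cut off — $\tilde f(q)\in\mathcal C^\infty_c(\R^d_q)$, $\tilde g(p)\in\mathcal C^\infty_c(\R^d_p)$ — observe that every iterated bracket then has a bounded (indeed compactly supported in both $q$ and $p$ for any genuinely mixed bracket) and hence complete Hamiltonian vector field, deduce $\mathfrak s\supseteq$ this cut-off Lie algebra, and then pass to the limit only for \emph{complete} targets, concluding $\mathfrak s\supseteq\mathcal C^\infty_c(T^*M,\R)$ rather than $\mathfrak s=\smooth(T^*M,\R)$. That weaker inclusion is what the reduction step actually needs, and with this rewriting the argument can be made to close. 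Finally, note that Berger and Turaev's own proof is, to the best of my knowledge, not this abstract saturation argument but a more explicit factorisation of Hamiltonian maps through generating-function decompositions, so even a corrected version of your argument would be a genuinely different route to the statement.
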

As a consequence of Proposition~\ref{prop3} and Theorem~\ref{th1}, we get the following sufficient condition for small-time approximate reachability of all Hamiltonian diffeomorphisms.
\begin{cor}\label{cor:berger}
If $\mathcal{V}\subset \ov{\mathcal{R}_{\rm st}}$ and $\mathcal{T}\subset\ov{\mathcal{R}_{\rm st}}$ then $\ov{\mathcal{R}_{\rm st}}={\rm DHam}(T^*M)$.
\end{cor}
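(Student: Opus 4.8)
The plan is to combine the Berger--Turaev density property (Theorem~\ref{th1}) with the semigroup and closedness properties of $\ov{\mathcal{R}_{\rm st}}$ recorded in Proposition~\ref{prop3}. Assume $\mathcal{V}\subset\ov{\mathcal{R}_{\rm st}}$ and $\mathcal{T}\subset\ov{\mathcal{R}_{\rm st}}$; we must show $\ov{\mathcal{R}_{\rm st}}={\rm DHam}(T^*M)$. Since $\ov{\mathcal{R}_{\rm st}}\subset{\rm DHam}(T^*M)$ always holds (every attainable $\Phi^\tau_{H_u}$ is a Hamiltonian diffeomorphism, and by definition $\ov{\mathcal{R}_{\rm st}}$ only contains elements of $\DHa(T^*M)$), only the inclusion $\supset$ needs an argument.

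First I would fix an arbitrary $\Phi\in\DHa(T^*M)$. By part (i) of Proposition~\ref{prop3}, the hypothesis gives that every finite composition $S_1\cdots S_N$ with $S_1,\dots,S_N\in\mathcal{V}\cup\mathcal{T}$ lies in $\ov{\mathcal{R}_{\rm st}}$ (one composes $N$ elements of $\ov{\mathcal{R}_{\rm st}}$; note each $S_i$ is a single shear $e^{\f{f}}$ or $e^{\f{g}}$, which is in $\ov{\mathcal{R}_{\rm st}}$ because $\mathcal{V},\mathcal{T}\subset\ov{\mathcal{R}_{\rm st}}$). Next, by Theorem~\ref{th1}, for every $\varepsilon>0$, every $\ell\in\N$ and every compact $K\subset T^*M$ there exist $N\in\N$ and $S_1,\dots,S_N\in\mathcal{V}\cup\mathcal{T}$ with $\|\Phi-S_1\cdots S_N\|_{\ell,K}<\varepsilon$. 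So $\Phi$ lies in the closure (in the compact-open topology) of $\ov{\mathcal{R}_{\rm st}}$.

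Finally I would invoke part (ii) of Proposition~\ref{prop3}, which says $\ov{\mathcal{R}_{\rm st}}$ is closed in $\Diff(T^*M)$ for the compact-open topology: running the approximation of Theorem~\ref{th1} along a sequence $\varepsilon_n\to 0$ (and an exhaustion of $T^*M$ by compacts together with an increasing sequence of $\ell$'s, exactly as in Lemma~\ref{lem1}) yields a sequence $(\phi_n)_n\subset\ov{\mathcal{R}_{\rm st}}$ with $\phi_n\to\Phi$ in the compact-open topology; hence $\Phi\in\ov{\mathcal{R}_{\rm st}}$. Since $\Phi$ was arbitrary, $\DHa(T^*M)\subset\ov{\mathcal{R}_{\rm st}}$, and the two inclusions give the claimed equality.

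There is essentially no obstacle here: the corollary is a formal consequence of the two quoted results, and the only mild point of care is the topology bookkeeping, namely that the $\varepsilon$-$\ell$-$K$ approximation in Theorem~\ref{th1} can be upgraded to convergence of a single sequence in the compact-open topology, which is precisely the content of Lemma~\ref{lem1}, and that Proposition~\ref{prop3}(ii) is stated for that same topology.
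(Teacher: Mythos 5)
Your proof is correct and follows exactly the route the paper intends: the paper presents this corollary as an immediate consequence of Proposition~\ref{prop3} and Theorem~\ref{th1}, and you have filled in the details (composition via Proposition~\ref{prop3}(i), closedness via Proposition~\ref{prop3}(ii), and the $\varepsilon$--$\ell$--$K$ bookkeeping via Lemma~\ref{lem1}) precisely as one would expect.
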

\section{Proof of Theorem~\ref{thm:euclidean}}\label{sec:6}
\subsection{Vertical shears on $T^*\R^d$}
We prove that vertical shears are small-time approximately reachable for system \eqref{eq:hamilton}, \eqref{eq:euclidean}.
\begin{theorem} \label{th5}
System \eqref{eq:hamilton}, \eqref{eq:euclidean} satisfies
$\mathcal{V}\subset \ov{\mathcal{R}_{\rm st}}$.
\end{theorem}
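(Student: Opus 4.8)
The plan is to show that every vertical shear $e^{\f{V}}$ with $V\in\smooth(\R^d_q,\R)$ lies in $\ov{\mathcal{R}_{\rm st}}$, using Corollary~\ref{cor:berger}'s hypothesis as the target. By Proposition~\ref{prop-densities} and Proposition~\ref{prop4}, the set of STAR functions is a Lie subalgebra of $\smooth(T^*\R^d,\R)$; so it suffices to exhibit a generating family of STAR functions whose iterated Poisson brackets span (a dense subset of) the functions depending only on $q$, and then to pass from such STAR functions $f=f(q)$ to the flows $e^{s\f{f}}$, which are precisely the vertical shears. The available STAR functions from Proposition~\ref{prop6} are the controlled potentials $V_1=q_1,\dots,V_d=q_d$ and $V_{d+1}=e^{-|q|^2/2}$, together with $V_0$ (via the drift, using $H_0-V_0=|p|^2/2$, which however is \emph{not} directly STAR since only the forward drift flow is reachable). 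The key obstruction, as flagged in the proof strategy, is that the drift term $H_0=|p|^2/2+V_0(q)$ can only be propagated forward in time, so a priori only Poisson brackets corresponding to forward drift flows are attainable; one cannot naively write $\ad_{\f{H_0}}$ with a negative coefficient.

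The first step is therefore to recover backward propagation along $|p|^2/2$. The idea indicated in the introduction is to produce the harmonic oscillator Hamiltonian $\tfrac12(|p|^2+|q|^2)$ as a STAR function: since $|p|^2/2$ is obtained as a limit of forward drift flows conjugated appropriately (or more precisely, since $\tfrac12|q|^2$ lies in the Lie algebra generated by $q_1,\dots,q_d$ and brackets with $|p|^2/2$), and since the flow of $\tfrac12(|p|^2+|q|^2)$ is $2\pi$-periodic in $\DHa(T^*\R^d)$ \emph{independently of the initial point}, one gets $e^{-s\f{(|p|^2+|q|^2)/2}}=e^{(2\pi-s)\f{(|p|^2+|q|^2)/2}}$ for $s\in(0,2\pi)$, hence $-\tfrac12(|p|^2+|q|^2)$ is STAR whenever $\tfrac12(|p|^2+|q|^2)$ is. Combining the forward drift flow with this, one extracts that $|p|^2/2$ — and its negative — are effectively usable inside $\ad$ computations, i.e. one can bracket against $\pm\f{|p|^2/2}$ freely. (This is presumably the content of a Proposition like the one referenced as Proposition~\ref{th3} in the introduction; I would prove or invoke it here.)

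With $\pm\tfrac12|p|^2$ and $q_1,\dots,q_d$, $e^{-|q|^2/2}$ all STAR, the second step is a bracket-generation computation staying inside $\smooth(\R^d_q,\R)$ (which is an abelian ideal-like piece on which vertical shears act): one checks that $\{|p|^2/2, q_i\}=-p_i$ is STAR, that $\{p_i,q_j\}$ recovers constants, and more usefully that brackets of the form $\{|p|^2/2,\cdot\}$ applied to functions of $q$ produce the ``momentum directional derivatives'' needed, while brackets of two such first-order objects return to functions of $q$; iterating, together with the single nonlinear generator $e^{-|q|^2/2}$ and translations generated by the $q_i$, one generates a dense subalgebra of $\smooth(\R^d_q,\R)$ — e.g. all polynomials in $q$ after using translated copies $e^{-|q-a|^2/2}$ of the Gaussian and taking derivatives, hence a $\mathcal{C}^\ell_{\mathrm{loc}}$-dense set of $q$-functions. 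The final step is routine: for $V\in\smooth(\R^d_q,\R)$ and any compact $K$, $\ell\in\N$, approximate $V$ in $\mathcal{C}^{\ell+1}(K')$ by a STAR function $f$; then $e^{\f{f}}\to e^{\f{V}}$ in $\mathcal{C}^\ell(K)$ by Proposition~\ref{prop2} (continuity of the flow in the vector field), and since $e^{\f{f}}\in\ov{\mathcal{R}_{\rm st}}$ by definition of STAR and $\ov{\mathcal{R}_{\rm st}}$ is closed under such limits by Proposition~\ref{prop3}(ii), we conclude $e^{\f{V}}\in\ov{\mathcal{R}_{\rm st}}$, i.e. $\mathcal{V}\subset\ov{\mathcal{R}_{\rm st}}$. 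The main obstacle throughout is the first step — legitimizing backward drift propagation via the periodicity of the harmonic oscillator flow — since everything afterwards is standard bracket-generation and flow-approximation bookkeeping.
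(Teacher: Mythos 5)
Your proposal contains a genuine gap: the first step — reversing the drift via the $2\pi$-periodicity of the harmonic oscillator flow — is both circular at this stage of the argument and, in fact, not needed for Theorem~\ref{th5}.

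To run the harmonic-oscillator trick you need $\tfrac12(|p|^2+|q|^2)$ to be a STAR function, which requires $\tfrac12|q|^2$ to be STAR. Your parenthetical justification, that $\tfrac12|q|^2$ lies in the Lie algebra generated by $q_1,\dots,q_d$ and brackets with $|p|^2/2$, is false: that Lie algebra is spanned by $1,q_1,\dots,q_d,p_1,\dots,p_d,|p|^2/2$ (all brackets among these close up without ever producing a function quadratic in $q$), and the Gaussian $e^{-|q|^2/2}$ only contributes decaying Hermite-type functions under iterated bracketing. In the paper, the STAR-ness of $|q|^2$ (and hence the usability of Proposition~\ref{th3} for system \eqref{eq:euclidean}) is a \emph{consequence} of Theorem~\ref{th5}, obtained in its third step through the localization-on-compacts argument that lets one replace an arbitrary $f\in\smooth(\R^d_q,\R)$ by a compactly supported one with the same flow on a given compact, and then approximate by Hermite polynomials. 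Using Proposition~\ref{th3} to prove Theorem~\ref{th5} therefore inverts the paper's logical order. Proposition~\ref{th3} is genuinely needed only for Theorem~\ref{th6} (horizontal shears), where one must bracket repeatedly against $|p|^2/2$ to climb in $p$-degree.

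The step your proposal is missing is the conjugation identity that extracts $p_i$ from the \emph{forward} drift alone: for $\tau>0$ and any $v\in\R$,
\begin{equation*}
e^{-\frac{v}{\tau}\f{q_i}}\,e^{\tau\f{H_0}}\,e^{\frac{v}{\tau}\f{q_i}}
=\exp\!\Bigl(\tau\f{H_0\circ e^{-\frac{v}{\tau}\f{q_i}}}\Bigr)
=\exp\!\bigl(\tau\f{H_0}+v\f{p_i}\bigr),
\end{equation*}
since $\tau H_0(q,p+\tfrac{v}{\tau}e_i)=\tau H_0(q,p)+v\,p_i+\tfrac{v^2}{2\tau}$ and the constant does not contribute to the Hamiltonian vector field. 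Letting $\tau\to 0^+$ (Proposition~\ref{prop2}) gives $e^{v\f{p_i}}\in\ov{\mathcal{R}_{\rm st}}$ for every $v\in\R$, with no time-reversal of the drift at all; the two conjugating factors may carry either sign because $q_i$ is already STAR. Iterating the same trick with $e^{-\frac{v}{\tau}\f{f}}e^{\tau\f{p_i}}e^{\frac{v}{\tau}\f{f}}=\exp(\tau\f{p_i}+v\f{\partial_{q_i}f})$ shows that the STAR $q$-functions are closed under $\partial_{q_i}$, and starting from the single nonlinear generator $e^{-|q|^2/2}$ this bootstrap generates (via the Hermite recurrence) a dense set of compactly supported functions of $q$, after which your final localization and limit step is correct and essentially matches the paper. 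So the bracket-generation and density parts of your plan are sound, but the drift-reversal premise is wrong, and without the conjugation identity above you have no way to produce $p_i$ and hence no way to start the $\partial_{q_i}$-closure argument.
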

\begin{proof}
\underline{First step:} Let us prove that the Hamiltonians $p_1,\dots,p_d$ are STAR. The functions $q_1,\dots,q_d$ are STAR according to Proposition~\ref{prop6}. Applying \eqref{prop:conjugation}, we have
\begin{align*}
    e^{-\frac{v}{\tau}\f{q_i}}e^{\tau\f{H_0}}e^{\frac{v}{\tau}\f{q_i}}
    =\exp(\tau\f{H_0(e^{-\frac{v}{\tau}\f{q_i}})}),
\end{align*}
and $\tau H_0(e^{-\frac{v}{\tau}\f{q_i}}(q,p))=\tau(|p|^2/2+V_0(q))+vp_i+v^2/(2\tau)$. So $\tau \f{H_0 \circ e^{-\frac{v}{\tau}\f{p_i}}}=\tau \f{H_0}+v\f{p_i}$, because the constant function has null contribution to the Hamiltonian vector field. By taking the limit as $\tau \rightarrow 0$ 
we get that $e^{v\f{p_i}}$ is small-time approximately reachable for any $v\in\R$. \\ \\
\underline{Second step:} Let us show that the functions $\partial_{q_1}f,\dots,\partial_{q_d}f$ are STAR if $f$ is STAR. Applying \eqref{prop:conjugation}, we have that 
for every $v\in \R$, $\tau\ne 0$, and $i\in\llbracket1,d\rrbracket$ 
the diffeomorphism
\begin{align*}
    e^{-\frac{v}{\tau}\f{f}}e^{\tau\f{p_i}}e^{\frac{v}{\tau}\f{f}}&
    =\exp(\tau\f{p_i}+v\f{\partial_{q_i}f})
\end{align*}
is small-time approximately reachable. By taking the limit as $\tau\to 0$ we obtain the desired property.\\ \\
\underline{Third step:} Let us show that every $f\in \smooth(\R^d_q,\R)$ is STAR. 
Notice that it is enough to consider $f$ with compact support, since the restriction of the flow $e^{s\f{f}}$ on a given compact $K\subset \R^d$ 
coincides with $e^{s\f{g}}|_K$, where $g\in \smooth(\R^d_q,\R)$ coincides with $f$ on a  compact set $\tilde K\supset K$ and has compact support. 
In particular $f$ can be taken in $H^s(\R^d)$ for every $s\geq 0$. The set of linear combinations of Hermite functions is dense in $H^s(\R^d)$ for any $s\geq 0$, and hence approximates $f$ in $\smooth$ by Sobolev embeddings. 
By Propositions~\ref{prop2} and \ref{prop3}, 
we are thus left to prove that any linear combinations of Hermite functions is STAR. We define by induction an increasing sequence of sets $(\mathcal{H}_{j})_{j\in\N}$ in $\smooth(\R^d_q,\R)$ by
$\mathcal{H}_0=\Span_{\R}\left\{q\mapsto  e^{-|q|^2/2} \right\}$ and, for every $j \in \N^*$,
$$\mathcal{H}_j := \Span_{\R} \left\{  f_0+\sum_{k=1}^d \partial_{q_k} f_k \mid  f_0,\dots,f_d \in \mathcal{H}_{j-1} \right\}.$$
 Thanks to the second step, Proposition~\ref{prop6}, and the fact that linear combinations of STAR functions is STAR (cf. Proposition~\ref{prop4}), any $f\in \mathcal{H}_{\infty} := \cup_{j\in\N} \mathcal{H}_j$ is STAR. Recall that the Hermite functions of one variable $(\psi_n)_{n\in \N}$ satisfy the recurrence relations
$$\psi_0(x)=e^{-\frac{x^2}2},\qquad \psi_0'=\sqrt{\frac12}\psi_1,\qquad 
\psi_n'=\sqrt{\frac{n}{2}}\psi_{n-1}-\sqrt{\frac{n+1}{2}}\psi_{n+1},\quad n\geq 1.$$
Since each Hermite function in $\R^d$ can be written as $q\mapsto \psi_{j_1}(q_1)\cdots \psi_{j_d}(q_d)$ with $j_1,\dots,j_d\in \N$, we conclude that $\mathcal{H}_\infty$ contains all  Hermite functions.
\end{proof}

\subsection{Quadratic Hamiltonians on $T^{*}\R^d$}

This  section contains some preliminary results about an auxiliary quadratic Hamiltonian of the form
\begin{equation} \label{eq5}
    H_u(q,p)=\frac{|p|^2}{2}+u\frac{|q|^2}{2}, \qquad u\in \R.
\end{equation}
The following proposition states that for the control system associated with such 
a Hamiltonian 
any backward propagation along the drift is small-time approximately reachable.  Such a property already appeared in \cite{agrachev-bettina-eugenio}. We recall here its proof for completeness. 
\begin{prop} \label{th3}
The function $|p|^2$ is STAR for system \eqref{eq:hamilton}, \eqref{eq5}.
\end{prop}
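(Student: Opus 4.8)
The plan is to exploit the fact that the harmonic oscillator $H_1 = \tfrac12|p|^2 + \tfrac12|q|^2$ has a flow that is periodic of period $2\pi$ in $\DHa(T^*\R^d)$, because the linear flow $(q,p)\mapsto(q\cos t + p\sin t,\ p\cos t - q\sin t)$ returns to the identity at $t=2\pi$ independently of the initial point. Concretely, after running the oscillator for a time close to $2\pi$ one has approximately the identity; but a slightly cleverer splitting lets one realize \emph{backward} drift along $\tfrac12|p|^2$. First I would observe that with $u=1$ and $u=0$ both available, the Hamiltonians $\tfrac12|p|^2+\tfrac12|q|^2$ and $\tfrac12|p|^2$ are small-time approximately reachable flows (for positive times), by the same argument as in Proposition~\ref{prop6}: taking the constant control $u\equiv 1$ on $[0,\tau]$ gives $e^{\tau\f{H_1}}$, and $u\equiv0$ gives $e^{\tau\f{H_0}}$ where here $H_0=\tfrac12|p|^2$ (taking $V_0\equiv0$ in this auxiliary system, or more precisely working with the Hamiltonian \eqref{eq5}).

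The key identity is that the linear symplectic flow of $\tfrac12|q|^2$ alone, namely $(q,p)\mapsto(q,\ p - tq)$, can be synthesized from oscillator and free pieces. Since the oscillator flow $\Phi^t_{H_1}$ is $2\pi$-periodic, for any $s>0$ small we have $\Phi^{2\pi-s}_{H_1} = (\Phi^{s}_{H_1})^{-1}$, i.e.\ the \emph{backward} oscillator flow is small-time approximately reachable. Writing $\f{H_1} = \f{H_0} + \f{\tfrac12|q|^2}$ and using the Lie product formula (Proposition~\ref{th2}) in the form $e^{-s\f{H_1}}e^{s\f{H_0}}\approx e^{-s\f{\frac12|q|^2}}$ as $s\to 0$ (more precisely $(e^{-s\f{H_1}/n}e^{s\f{H_0}/n})^n \to e^{-s\f{\frac12|q|^2}}$), and combining with Proposition~\ref{prop2} and part (ii) of Proposition~\ref{prop3}, I conclude that $\tfrac12|q|^2$ is STAR, hence so is $|q|^2$. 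Now apply the same trick once more: $e^{-s\f{H_1}}e^{s\f{\frac12|q|^2}}\approx e^{-s\f{\frac12|p|^2}}$ as $s\to 0$, so $\tfrac12|p|^2$ — and therefore $|p|^2$ — is STAR for system \eqref{eq:hamilton}, \eqref{eq5}.

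Let me restate the chain more carefully, since the order matters. Step 1: $H_0 = \tfrac12|p|^2$ and $H_1 = \tfrac12|p|^2 + \tfrac12|q|^2$ generate forward flows in $\ov{\mathcal{R}_{\rm st}}$ (Proposition~\ref{prop6}-type argument). Step 2: because $\f{H_1}$ has a $2\pi$-periodic flow, $e^{-s\f{H_1}} = e^{(2\pi-s)\f{H_1}} \in \ov{\mathcal{R}_{\rm st}}$ for all $s\in[0,2\pi]$; by semigroup closure (Proposition~\ref{prop3}(i)) we get $e^{-t\f{H_1}}\in\ov{\mathcal{R}_{\rm st}}$ for all $t\ge 0$, so $-H_1$ (equivalently, the backward flow along $H_1$) is attainable. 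Step 3: since $f\mapsto$ (STAR) is closed under the operations used in Proposition~\ref{prop4}, and $-H_1$ and $H_0$ both give attainable flows, the sum $-H_1 + H_0 = -\tfrac12|q|^2$ is STAR (via Proposition~\ref{th2}); hence $|q|^2$ is STAR. Step 4: similarly $-H_1 + \tfrac12|q|^2 = -\tfrac12|p|^2$ gives that $|p|^2$ is STAR, which is the claim.

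\textbf{Main obstacle.} The delicate point is Step 2 and its interaction with the small-time constraint: one must check that the periodicity of the oscillator flow is \emph{exact} (it is, because $2\pi$ is independent of $(q,p)$, unlike a general quadratic potential) and that running the oscillator backward can be done within arbitrarily small total time — this works because $e^{-s\f{H_1}}$ is reached as $e^{(2\pi-s)\f{H_1}}$ in time $2\pi-s < 2\pi$, which is a \emph{fixed} time, not arbitrarily small; so to get smallness one instead composes $n$ copies of $e^{(2\pi-s/n)\f{H_1}}$, keeping each factor's deficit controlled, and uses $e^{-s\f{H_1}} = \big(e^{(2\pi - s/n)\f{H_1}}\big)^n$ — again exact by periodicity — with total time $n(2\pi - s/n) = 2\pi n - s$. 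Hmm, that is not small either. The correct resolution is that Proposition~\ref{prop3}(i) only asks each \emph{generator} to be in $\ov{\mathcal{R}_{\rm st}}$, and $e^{-s\f{H_1}}$, being equal to $e^{(2\pi-s)\f{H_1}}$, is a single flow reachable in the fixed finite time $2\pi - s$; smallness of the overall synthesis time is then not required because $\ov{\mathcal{R}_{\rm st}}$, as defined, already permits any reachable-in-small-time diffeomorphism and is a semigroup, and one only ever needs each building block to be STAR in the sense "$e^{s\f{f}}\in\ov{\mathcal{R}_{\rm st}}$ for all $s$" — which for $f=-H_1$ means $e^{-sH_1}\in\ov{\mathcal{R}_{\rm st}}$, and this holds by periodicity for $s\in[0,2\pi]$ and then for all $s\ge0$ by the semigroup property. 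So the genuine content to verify is just: the oscillator flow is globally periodic with a point-independent period (elementary), and the two applications of the Lie product formula produce the brackets/sums needed; these are exactly the tools already assembled in Section~\ref{sec:4}.
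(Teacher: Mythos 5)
The periodicity idea is the right high-level intuition (the paper does use it), but your proof has a genuine gap at Steps~1 and~2, and the "resolution" you offer in the "Main obstacle" paragraph is circular. The claim that $e^{\tau\f{H_1}}$ (and $e^{\tau\f{H_0}}$) with $H_1=\tfrac12|p|^2+\tfrac12|q|^2$, $H_0 = \tfrac12|p|^2$, "are small-time approximately reachable flows (for positive times), by the same argument as in Proposition~\ref{prop6}" is false. Proposition~\ref{prop6} works only for the \emph{controlled potentials}: one sends $u_i=s/\tau\to\infty$ so that the drift contribution $\tau\f{H_0}$ vanishes in the limit. That mechanism does not apply to the drift itself. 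With the constant control $u\equiv 1$ you reach $e^{\tau\f{H_1}}$, but only \emph{at time $\tau$}; for $\ov{\mathcal{R}_{\rm st}}$ you need to produce $e^{\tau\f{H_1}}$ in time $<\e$ for \emph{every} $\e>0$, which constant controls cannot do once $\tau$ is not arbitrarily small. Consequently writing $e^{-s\f{H_1}} = e^{(2\pi-s)\f{H_1}}$ does not place it in $\ov{\mathcal{R}_{\rm st}}$: you would need $e^{(2\pi-s)\f{H_1}}$ reachable in arbitrarily small time, and that is precisely what remains unproved. Saying that "$\ov{\mathcal{R}_{\rm st}}$ already permits any reachable-in-small-time diffeomorphism" begs the question; the whole issue is whether this particular diffeomorphism \emph{is} reachable in small time.

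What is missing is a \emph{time-compression} mechanism, and this is exactly what the paper's proof supplies. It first shows, by conjugating the short-time free flow $e^{\tau\f{|p|^2/2}}$ with the STAR vertical shear $e^{\pm\frac{v}{2\tau}\f{|q|^2}}$ and applying Proposition~\ref{prop1} plus the Trotter formula (Proposition~\ref{th2}), that $p\cdot q$ is STAR, hence the dilations $D_s = e^{\ln(s)\f{p\cdot q}} : (q,p)\mapsto (sq,p/s)$ are small-time approximately reachable. The key identity
\[
D_{1/\sqrt{\tau}}\, e^{\tau v\,\f{|p|^2/2}}\, D_{\sqrt{\tau}} \;=\; e^{v\,\f{|p|^2/2}}
\]
then realizes the time-$v$ free flow using only a time-$\tau v$ run of the drift (plus dilations of negligible time cost), so $e^{v\f{|p|^2/2}}\in\ov{\mathcal{R}_{\rm st}}$ for \emph{all} $v\ge 0$. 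Combining with $|q|^2$ STAR via Proposition~\ref{th2} gives $e^{v\f{H_1}}\in\ov{\mathcal{R}_{\rm st}}$ for all $v\ge 0$, and only \emph{then} does periodicity of the oscillator flow deliver negative times, so $|p|^2+|q|^2$ is STAR and $|p|^2 = (|p|^2+|q|^2)-|q|^2$ is STAR. Your Steps~3--4 (subtracting $H_0$ and then $\tfrac12|q|^2$ from $H_1$ via Trotter) are fine once the backward oscillator flow is available, but they rely on Step~2, which cannot be established by periodicity alone without the dilation argument.
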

\begin{proof}
First recall that the function $|q|^2$ is STAR according to Proposition~\ref{prop6}. 
Since the diffeomorphism $e^{\tau \frac{\f{|p|^2}}{2}}$ is reachable in time $\tau>0$ with control $u\equiv 0$, it follows that, for every $v\in \R$, 
$$e^{\frac{v}{2\tau}\f{|q|^2}}e^{\tau \frac{\f{|p|^2}}{2}}e^{-\frac{v}{2\tau}\f{|q|^2}}$$ 
is approximately reachable in time $\tau$. 
Applying \eqref{prop:conjugation}, we have that
\begin{align*}
    e^{\frac{v}{2\tau}\f{|q|^2}}e^{\tau \frac{\f{|p|^2}}{2}}e^{-\frac{v}{2\tau}\f{|q|^2}}
    &=\exp(\tau \frac{\f{|p|^2}}{2} - v \f{p\cdot q} + \frac{v^2}{2\tau}\f{|q|^2}).
\end{align*}
The function $|q|^2$ being STAR,  Proposition~\ref{th2} implies that $e^{\tau \frac{\f{|p|^2}}{2}-v\f{p\cdot q}}$ is approximately reachable in time $\tau$. Taking the limit as $\tau \rightarrow 0$, we get that the function $p\cdot q$ is STAR.

As a consequence, for every $s>0$, the dilation
 $$D_s(q,p):=e^{\ln(s)\f{p\cdot q}}(q,p)=\left(sq,\frac{1}{s}p\right)$$ 
 is small-time approximately reachable for every $s>0$. For $v\geq 0$ and $\tau >0$, $e^{\tau v \frac{\f{|p|^2}}{2}}$ is approximately reachable in time $\tau v$. Thus, the element $$D_{\frac{1}{\sqrt{\tau}}} e^{\tau v \frac{\f{|p|^2}}{2}}D_{\sqrt{\tau}}$$
is approximately reachable in time $\tau v$. We have
\begin{equation*}
    e^{\tau v\frac{\f{|p|^2}}{2}}D_{\sqrt{\tau}}(q,p)=\left(\sqrt{\tau}q+\sqrt{\tau} v p,\frac{p}{\sqrt{\tau}}\right),
\end{equation*}
and finally
\begin{align*}
    \left(D_{\frac{1}{\sqrt{\tau}}} e^{\tau v \frac{\f{|p|^2}}{2}}D_{\sqrt{\tau}} \right)(q,p)&=(q+vp,p) 
    =e^{v\frac{\f{|p|^2}}{2}}(q,p).
\end{align*}
So for every $\tau >0$, $$D_{\frac{1}{\sqrt{\tau}}} e^{\tau v \frac{\f{|p|^2}}{2}}D_{\sqrt{\tau}}=e^{v\frac{\f{|p|^2}}{2}}.$$
Taking $\tau$ arbitrarily small, we deduce that  $e^{v\frac{\f{|p|^2}}{2}}$ is small-time approximately reachable for every constant $v \geq 0$. Applying Proposition~\ref{th2}, the diffeomorphism $e^{v(\frac{\f{|p|^2}}{2}+\frac{\f{|q|^2}}{2})}$ is small-time approximately reachable for every $v \geq 0$. Notice that the latter element is periodic, hence for every $w <0$, there exists $v \geq 0$ such that $e^{w (\frac{\f{|p|^2}}{2}+\frac{\f{|q|^2}}{2})}=e^{v(\frac{\f{|p|^2}}{2}+\frac{\f{|q|^2}}{2})}$. Thus the Hamiltonian $|p|^2+|q|^2$ is STAR. Since $|q|^2$ is also STAR, we deduce from Proposition~\ref{prop4} that $|p|^2$ is STAR.
\end{proof}

\subsection{Horizontal shears on $T^*\R^d$}
We prove that horizontal shears are small-time approximately reachable for system \eqref{eq:hamilton}, \eqref{eq:euclidean}.

\begin{theorem} \label{th6}
System \eqref{eq:hamilton}, \eqref{eq:euclidean} satisfies
$\mathcal{T}\subset \ov{\mathcal{R}_{\rm st}}$.
\end{theorem}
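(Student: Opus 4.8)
The plan is to mimic the structure of the proof of Theorem~\ref{th5} (vertical shears), now exploiting the extra Hamiltonian $e^{-|q|^2/2}$ together with the fact—already available from Proposition~\ref{th3} applied in a localized form—that we can generate quadratic-in-$p$ Hamiltonians, hence backward propagation along $|p|^2/2$. Concretely, by Corollary~\ref{cor:berger} it suffices to show $\mathcal{T}\subset\ov{\mathcal{R}_{\rm st}}$, i.e. that every $g\in\smooth(\R^d_p,\R)$ is STAR. The key new ingredient is that, by Theorem~\ref{th5}, all functions $f\in\smooth(\R^d_q,\R)$ are STAR, and in particular so is $e^{-|q|^2/2}$; also $p_1,\dots,p_d$ and $p\cdot q$ are STAR (from the first step of Theorem~\ref{th5} and from Proposition~\ref{th3}). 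The idea is to build, step by step, a Lie subalgebra of STAR functions that contains a generating set of monomials in $p$ playing the role that Hermite functions played in the $q$-variable.

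First I would show that $|p|^2/2$ is STAR for the full system~\eqref{eq:euclidean}. This is the analogue of Proposition~\ref{th3}: since $|q|^2$ is STAR (it is a STAR function of $q$ by Theorem~\ref{th5}), and $|p|^2/2$ is reachable along the drift with $u\equiv0$ in any short time, conjugating $e^{\tau\f{|p|^2}/2}$ by $e^{\pm\frac{v}{2\tau}\f{|q|^2}}$ and using~\eqref{prop:conjugation} together with Proposition~\ref{th2} yields that $p\cdot q$ is STAR, then the dilations $D_s$ are STAR, and the same rescaling argument as in Proposition~\ref{th3} gives that $|p|^2$, hence $|p|^2/2$, is STAR. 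With $|p|^2/2$ STAR we then obtain the backward-drift device: for any STAR function $h$ we may conjugate by $e^{\pm\frac{v}{\tau}\f{h}}$ and also use $e^{\tau\f{|p|^2}/2}$ freely.

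Next, using conjugation as in the second step of Theorem~\ref{th5} but now with the roles of $q$ and $p$ exchanged: if $g$ is STAR then $\partial_{p_i}g$ is STAR. Indeed, $e^{-\frac{v}{\tau}\f g}e^{\tau\f{q_i}}e^{\frac{v}{\tau}\f g}=\exp(\tau\f{q_i}-v\f{\partial_{p_i}g})$ by~\eqref{prop:conjugation}, and $q_i$ is STAR (Proposition~\ref{prop6}), so letting $\tau\to0$ and invoking Propositions~\ref{th2},~\ref{prop2},~\ref{prop3} shows $\partial_{p_i}g$ is STAR. Dually, using that the $p_i$ are STAR one checks $q_i\cdot(\text{monomial})$-type brackets close up. Then one runs the Hermite-function bootstrap in the $p$-variable: define $\mathcal{H}_0^p=\Span_\R\{p\mapsto e^{-|p|^2/2}\}$ — but note $e^{-|p|^2/2}$ must itself be shown STAR, which is where $e^{-|q|^2/2}$ (STAR by Theorem~\ref{th5}) enters, via conjugation by the quarter-period flow of the harmonic oscillator $e^{v(\f{|p|^2}+\f{|q|^2})/2}$, which exchanges $q$ and $p$ (this flow is STAR by the argument above, being generated from $|p|^2/2$ and $|q|^2/2$ and periodic); hence $e^{-|p|^2/2}=e^{-|q|^2/2}\circ(\text{that flow})$ is STAR by~\eqref{prop:conjugation}. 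Then set $\mathcal H_j^p=\Span_\R\{f_0+\sum_k\partial_{p_k}f_k\mid f_0,\dots,f_d\in\mathcal H_{j-1}^p\}$; by the second step and Proposition~\ref{prop6} (the $p_i$ are STAR, playing the role the $q_i$ played before) every element of $\mathcal H_\infty^p$ is STAR, and as in Theorem~\ref{th5} this set contains all $d$-dimensional Hermite functions in $p$. A density/truncation argument identical to the third step of Theorem~\ref{th5} (cutting off $g$ to have compact support without changing the flow on a given compact, then approximating in $\smooth$ by Sobolev embeddings) then shows every $g\in\smooth(\R^d_p,\R)$ is STAR, i.e. $\mathcal T\subset\ov{\mathcal R_{\rm st}}$.

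The main obstacle I expect is cleanly obtaining the $q\leftrightarrow p$ symmetry: unlike the pure $q$-case, here we do not a priori have a Hamiltonian quadratic in $p$ in the control system, so everything downstream of "$|p|^2/2$ is STAR" hinges on first executing the Proposition~\ref{th3}-type argument inside system~\eqref{eq:euclidean}, and then verifying that the harmonic-oscillator flow $e^{v(\f{|p|^2}+\f{|q|^2})/2}$ is genuinely STAR (not merely forward-reachable) — which requires its periodicity, valid precisely because the period is independent of the initial point. Once that symmetry is in hand, the rest is a mirror image of the proof of Theorem~\ref{th5} and presents no new difficulty beyond bookkeeping; in particular the compact-support cutoff is harmless since on any fixed compact the horizontal shear $e^{s\f g}$ depends only on $g$ restricted to a bounded $p$-region.
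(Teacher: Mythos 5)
Your proposal reaches the right conclusion and is essentially correct, but it takes a more roundabout route than the paper. Both proofs must first establish that $|p|^2$ is STAR for system~\eqref{eq:hamilton},~\eqref{eq:euclidean} by running the argument of Proposition~\ref{th3} inside this system, and you are appropriately careful that this must be re-executed here since the drift now also contains $V_0(q)$; the extra term is harmless because it is STAR by Theorem~\ref{th5}. After that, the paper finishes in two lines: since STAR functions form a Lie algebra (Proposition~\ref{prop4}), all $q$-functions are STAR (Theorem~\ref{th5}), and $|p|^2/2$ is STAR, every iterated bracket $\ad^m_{|p|^2/2}\bigl(\tfrac{1}{m!}q_1^{m_1}\cdots q_d^{m_d}\bigr)=(p\cdot\nabla_q)^m\tfrac{1}{m!}q_1^{m_1}\cdots q_d^{m_d}=p_1^{m_1}\cdots p_d^{m_d}$ is STAR, so every monomial in $p$ is, and density of polynomials in $\smooth(\R^d_p,\R)$ closes the argument via Propositions~\ref{prop2} and~\ref{prop3}. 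You instead transport the Gaussian $e^{-|q|^2/2}$ to $e^{-|p|^2/2}$ by conjugating with the quarter-period of the harmonic oscillator, and then re-run the entire Hermite-function bootstrap of Theorem~\ref{th5} with the roles of $q$ and $p$ exchanged. This is valid --- the periodicity is precisely what you need to make the rotation STAR in both time directions, and the $p$-bootstrap is a faithful mirror of the $q$-one --- but it re-derives a whole apparatus that the paper bypasses with a single clean bracket computation. Two small inaccuracies in your write-up: the lemma ``$g$ STAR implies $\partial_{p_i}g$ STAR'' is already a direct consequence of Proposition~\ref{prop4}, since $\partial_{p_i}g=-\{q_i,g\}$ and $q_i$ is STAR by Proposition~\ref{prop6}; and the fact that $p_1,\dots,p_d$ are STAR comes from the first step of the proof of Theorem~\ref{th5}, not from Proposition~\ref{prop6} as you cite.
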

\begin{proof}
According to Proposition~\ref{th3}, 
$|p|^2$ is STAR.
Hence, combining Theorem~\ref{th5} and Proposition~\ref{prop4}  we have that ${\rm ad}^k_{|p|^2/2}f$ is STAR for any $f\in \smooth(\R_q,\R)$, $k\in \N$.

Let $P(p)=p_1^{m_{1}}\dots p_d^{m_{d}}$  be a monomial. 
Setting 
 $m=m_1+\dots + m_d$ and 
$$f(q)=\frac{1}{m!}q_{1}^{m_{1}}\dots q_d^{m_{d}},$$
we have that $\ad^m_{\frac{|p|^2}{2}}f=
P$, showing that $P$ is STAR.
By density of polynomials in $\smooth(\R^d_p,\R)$, the proof is concluded.
\end{proof}
By virtue of Theorems~\ref{th5}, \ref{th6} and Corollary~\ref{cor:berger}, the proof of Theorem~\ref{thm:euclidean} is concluded.
\section{Proof of Theorem~\ref{thm:torus}}\label{sec:7}
\subsection{Vertical shears on $T^*\T^d$}
As in the case of $\R^d$, 
we now prove that vertical shears are small-time approximately reachable for system \eqref{eq:hamilton}, \eqref{eq:torus}.

\begin{theorem} \label{th7}
System \eqref{eq:hamilton}, \eqref{eq:torus} satisfies
$\mathcal{V}\subset \ov{\mathcal{R}_{\rm st}}$.
\end{theorem}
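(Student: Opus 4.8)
The goal is to show $\mathcal{V}\subset \ov{\mathcal{R}_{\rm st}}$ for the torus system \eqref{eq:hamilton}, \eqref{eq:torus}, i.e.\ that every $f\in\smooth(\T^d_q,\R)$ is STAR. The plan is to mimic the three-step structure used in the Euclidean case (Theorem~\ref{th5}), adapting it to the fact that now the controlled potentials are the trigonometric functions $\cos(k_j\cdot q)$, $\sin(k_j\cdot q)$ rather than linear functions, and that the configuration space is compact.

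\textbf{Step 1: the momenta $p_1,\dots,p_d$ are STAR.} The idea is the same conjugation trick as in Theorem~\ref{th5}. By Proposition~\ref{prop6}, each $\cos(k_j\cdot q)$ and $\sin(k_j\cdot q)$ is STAR. Using the formula \eqref{prop:conjugation}, conjugating the drift flow $e^{\tau\f{H_0}}$ by $e^{\frac{v}{\tau}\f{g}}$ with $g=g(q)$ a STAR function depending only on $q$, and computing $\tau H_0\circ e^{-\frac{v}{\tau}\f{g}}(q,p)=\tau(|p|^2/2+V_0(q))+v\,\nabla_q g(q)\cdot p+\frac{v^2}{2\tau}|\nabla_q g(q)|^2$, one finds that $\tau\f{H_0}+v\f{\nabla_q g\cdot p}$ is approximately reachable in time $\tau$; letting $\tau\to0$ shows $\nabla_q g\cdot p$ is STAR for every STAR $g=g(q)$. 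Applying this with $g=\sin(k_j\cdot q)$ gives that $\cos(k_j\cdot q)\,(k_j\cdot p)$ is STAR; a further conjugation (as in the second step of Theorem~\ref{th5}) lets one extract the coefficient functions. The key linear-algebra point is that the frequencies \eqref{eq:frequencies} are chosen so that $k_1,\dots,k_{d}$ span $\R^d$ (indeed $k_1,\dots,k_{d-1}$ are the first $d-1$ coordinate vectors and $k_d=(1,\dots,1)$), which should let us disentangle the individual momenta $p_1,\dots,p_d$ from the combinations $k_j\cdot p$ by taking linear combinations and using that the Lie algebra of STAR functions (Proposition~\ref{prop4}) is closed under Poisson brackets --- e.g.\ $\{\sin(k_i\cdot q),\cos(k_j\cdot q)(k_j\cdot p)\}$ produces new trigonometric$\times$linear terms whose $q$-independent parts can be isolated.

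\textbf{Step 2: $\nabla_q f$ components are STAR whenever $f$ is.} Once $p_1,\dots,p_d$ are STAR, the second step of Theorem~\ref{th5} transfers verbatim: conjugating $e^{\tau\f{p_i}}$ by $e^{\frac{v}{\tau}\f{f}}$ and using \eqref{prop:conjugation} gives that $\tau\f{p_i}+v\f{\partial_{q_i}f}$ is approximately reachable, and $\tau\to0$ yields that $\partial_{q_i}f$ is STAR.

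\textbf{Step 3: every $f\in\smooth(\T^d_q,\R)$ is STAR.} Here one replaces Hermite functions by the Fourier basis $q\mapsto e^{in\cdot q}$, $n\in\Z^d$ (equivalently $\cos(n\cdot q),\sin(n\cdot q)$), which is dense in $\smooth(\T^d_q,\R)$ for the $\smooth$-topology, so by Propositions~\ref{prop2} and \ref{prop3} it suffices to show each such trigonometric monomial is STAR. Proposition~\ref{prop6} gives the generators $\cos(k_j\cdot q),\sin(k_j\cdot q)$ for the frequencies in \eqref{eq:frequencies}; one then builds up all frequencies $n\in\Z^d$ by repeatedly applying Step~2 together with Poisson brackets (product-to-sum trigonometric identities: $\{\cos(a\cdot q)(b\cdot p),\sin(c\cdot q)\}$ and similar expressions generate $\cos$ and $\sin$ at frequencies $a\pm c$), using that the set of STAR functions is a Lie subalgebra (Proposition~\ref{prop4}). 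Since $\{k_1,\dots,k_d\}$ is a basis of $\Z^d$ over $\Z$ — again the reason for the specific choice \eqref{eq:frequencies}, with $k_d=(1,\dots,1)$ ensuring we can reach frequencies not obtainable from the coordinate directions alone when combined with sign changes and sums — iterating generates every $n\in\Z^d$.

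\textbf{Main obstacle.} The delicate point is Step~1, and more precisely the bookkeeping that turns the ``mixed'' STAR functions of the form (trigonometric in $q$)$\times$(linear in $p$) into the pure momenta $p_i$, and then in Step~3 the combinatorial argument that the Poisson-bracket closure of the trigonometric generators at frequencies \eqref{eq:frequencies}, together with the $\partial_{q_i}$ operation, exhausts all Fourier modes. This requires checking that no resonance or cancellation obstructs reaching a given frequency $n$ — a purely algebraic verification that the chosen $k_j$'s generate $\Z^d$ as a group and that the bracket relations do not collapse. The rest (the conjugation identities, the $\tau\to0$ limits, density of trigonometric polynomials) is routine given the tools in Sections~\ref{sec:propertiesVF}--\ref{sec:5}.
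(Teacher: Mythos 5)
Your Step 1 contains a fatal error. You correctly compute
$$\tau\, H_0\circ e^{-\frac{v}{\tau}\f{g}}(q,p)=\tau\Bigl(\tfrac{|p|^2}{2}+V_0(q)\Bigr)+v\,\nabla_q g(q)\cdot p+\frac{v^2}{2\tau}|\nabla_q g(q)|^2,$$
but then silently discard the last term and claim that $\tau\f{H_0}+v\f{\nabla_q g\cdot p}$ is approximately reachable. In the Euclidean case with $g=q_i$ this is legitimate because $|\nabla_q q_i|^2\equiv1$ is constant and hence $\f{|\nabla_q g|^2}=0$; in the torus case, with $g=\sin(k_j\cdot q)$ say, $|\nabla_q g|^2=|k_j|^2\cos^2(k_j\cdot q)$ is genuinely $q$-dependent, so $\frac{v^2}{2\tau}\f{|\nabla_q g|^2}$ is a nonzero vector field that \emph{blows up} as $\tau\to0$. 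You cannot subtract it off either: that would require $e^{-\frac{v^2}{2\tau}\f{|\nabla_q g|^2}}$ to be reachable, i.e.\ $|\nabla_q g|^2$ to be STAR with \emph{negative} coefficient, which is exactly part of what you are trying to prove. This obstruction is not incidental: $p_i$ is not of the form $\nabla_q g\cdot p$ for any $g\in\smooth(\T^d_q,\R)$ (there is no smooth function with gradient a nonzero constant on a compact manifold), and the paper explicitly records that whether $\mathcal{T}\subset\ov{\mathcal{R}_{\rm st}}$ on the torus is an open problem. Your Step 1 is therefore trying to prove something stronger than Theorem~\ref{th7} and almost certainly stronger than what is true with these techniques.

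The paper's proof sidesteps the momenta entirely, which is exactly what the theorem demands ($\mathcal{V}$ consists only of $e^{\f{f}}$ with $f\in\smooth(\T^d_q,\R)$). It uses a different scaling: conjugating by $e^{\frac{\sqrt{u}}{\sqrt{\tau}}\f{f}}$ rather than $e^{\frac{v}{\tau}\f{f}}$, one finds that the reachable diffeomorphism is $\exp\bigl(\tau\f{H_0}-2\sqrt{\tau u}\,\f{p\cdot\nabla_q f}+u\f{|\nabla_q f|^2}\bigr)$, and as $\tau\to0$ the $p$-linear cross term vanishes while the quadratic term $u|\nabla_q f|^2$ survives. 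Thus if $f=f(q)$ is STAR then $e^{u\f{|\nabla_q f|^2}}\in\ov{\mathcal{R}_{\rm st}}$ for $u\geq0$. The algebra generated from the control potentials by the operations $\varphi\mapsto\varphi_0+\sum_k|\nabla\varphi_k|^2$ is then shown (citing \cite[Proposition 2.6]{duca-nersesyan}) to contain all trigonometric polynomials, hence is dense in $\smooth(\T^d_q,\R)$, and the conclusion follows from Proposition~\ref{prop3}. In short: the paper keeps the $q$-quadratic term that your scaling makes diverge, and never needs $p_i$. Your Step 3 combinatorics could be salvaged in spirit (it is the same kind of trigonometric bookkeeping that \cite{duca-nersesyan} carries out), but it cannot be launched from your flawed Step 1.
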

\begin{proof}
We first claim that, if $f\in \smooth(\T^d_q,\R)$ is STAR, then $e^{u\f{|\nabla_qf|^2}}\in \ov{\mathcal{R}_{\rm st}}$ for every $u \geq 0$. Applying \eqref{prop:conjugation} we have that the diffeomorphism
\begin{align*}
e^{\frac{\sqrt{u}}{\sqrt{\tau}}\f{f}}e^{\tau \f{H_0}}e^{-\frac{\sqrt{u}}{\sqrt{\tau}}\f{f}}
&=\exp(\tau \f{H_0}-2\sqrt{\tau u}\f{p\cdot \nabla_q f}+u |\nabla_qf|^2)
\end{align*}
is small-time approximately reachable in time $\tau>0$. By letting $\tau\to0$, the claim is proved.

We define an increasing sequence of vector spaces $(\mathcal{H}_{j})_{j\in\N}$ by setting
$$\mathcal{H}_0=\Span_\R \left\{1,\cos(k_1 \cdot),\sin(k_1 \cdot),\dots,\cos(k_d\cdot),\sin(k_d\cdot)\mid k_1,\dots,k_d \text{ as in }\eqref{eq:frequencies}\right\} \subset \smooth(\T^d_q,\R)$$
and, by induction, 
letting
$\mathcal{H}_j$, $j \in \N^*$, be the largest vector space whose elements can be written as 
$$  \varphi_0+\sum_{k=1}^N|\nabla\varphi_k|^2,\qquad  \text{with }N\in\N,\ \varphi_0,\dots,\varphi_N\in \mathcal{H}_{j-1}. $$
Let $\mathcal{H}_{\infty} := \cup_{j\in\N} \mathcal{H}_j$. Thanks to the claim 
and Propositions~\ref{prop4}, \ref{prop6}, 
any $f\in \mathcal{H}_\infty$ is STAR. Moreover, the proof of \cite[Proposition 2.6]{duca-nersesyan} shows that $\mathcal{H}_{\infty}$ contains all trigonometric polynomials. In particular, $\mathcal{H}_{\infty}$ is dense in $\smooth(\T^d_q,\R)$, and the conclusion follows from Proposition~\ref{prop3}.
\end{proof}

\subsection{A non-Hamiltonian symmetry on densities}
To the best of our knowledge, it is an open problem whether horizontal shears on $T^*\T^d$ can be approximately reached by system \eqref{eq:hamilton}, \eqref{eq:torus} or not. We thus turn our attention to the weaker property of approximately controlling the Liouville equation \eqref{eq:liouville}, \eqref{eq:torus}. At the level of densities, the system is less rigid and we can approximately reach the following non-Hamiltonian diffeomorphism.
\begin{lemma} \label{lem6}
    Let $S$ be the symmetry defined by
    $$S(q,p)=(q,-p).$$
    Then $\ro \circ S \in \ov{R_{\rm st}}(\ro)$ for every $\ro \in L^r(T^*\T^d)$. 
\end{lemma}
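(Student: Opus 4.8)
\textbf{Proof strategy for Lemma~\ref{lem6}.} The plan is to approximate the momentum-reversal $S$ by Hamiltonian diffeomorphisms at the level of the densities, exploiting the fact (established in Lemma~\ref{lem3}) that it suffices to treat $\ro\in\mathcal{C}^\infty_c(T^*\T^d)$, and that, by Remark~\ref{rem:lem5}, $L^r$-convergence of $\ro\circ\phi_n$ follows from compact-open convergence of the diffeomorphisms $\phi_n$ acting on a fixed compact set. So the real target is: approximate $S$ (restricted to an arbitrary large compact ball in the $p$-variable, for an arbitrary $\mathcal{C}^\ell$-norm) by maps of the form $\Phi^\tau_{H_u}$ with $\tau$ small, \emph{even though $S$ itself is not symplectic} (indeed $DS$ has determinant $(-1)^d$, and for $d$ odd it even reverses orientation; more importantly $S^*\om=-\om$, so $S\notin{\rm DHam}$).

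The key point is that although $S$ is not Hamiltonian, the composition of $S$ with a \emph{localized} volume-preserving map can be; more precisely, on any prescribed compact set $K$ the restriction of $S$ agrees with the restriction of a genuine Hamiltonian diffeomorphism. Here is the idea. First, by Theorem~\ref{th7} the vertical shears are STAR, and the proof of Theorem~\ref{th7} in fact shows that every $f\in\smooth(\T^d_q,\R)$ is STAR; moreover $H_0=|p|^2/2+V_0$ generates the free drift $e^{\tau\f{H_0}}$ which, for $V_0\equiv$ const or more generally on the relevant region, acts as $(q,p)\mapsto(q+\tau p+\dots,p)$. The crucial building block is a Hamiltonian diffeomorphism realizing, locally in $q$ over a single fundamental-domain chart and locally in $p$ over a large ball, the \emph{linear symplectic rotation} by angle $\pi$ in each $(q_i,p_i)$-plane, i.e. $(q,p)\mapsto(-q,-p)$ on the chart — this is a half-period of a harmonic-oscillator-type Hamiltonian $\sum_i(p_i^2+q_i^2/w^2)/2$ of the same type already used in Lemma~\ref{lem10}, cut off to have compact support. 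Composing this with the reflection $q\mapsto -q$ would give $S$; but $q\mapsto-q$ is itself \emph{not} available as a continuous map on $\T^d$ with nonzero degree — however, \emph{at the level of densities} the map $q\mapsto-q$ on $\T^d$ \emph{is} a measure-preserving diffeomorphism of $\T^d$, hence lifts to the symplectomorphism $(q,p)\mapsto(-q,-p)$ of $T^*\T^d$, which one checks \emph{is} Hamiltonian-isotopic to the identity on $T^*\T^d$ (its class in the flux group is trivial since $H^1(T^*\T^d)$ pairing... careful here). So the factorization $S = [(q,p)\mapsto(-q,-p)] \circ [(q,p)\mapsto(-q,p)]$ reduces matters to: (i) the point-reflection $(q,p)\mapsto(-q,-p)$, which I claim is a limit of reachable Hamiltonian maps (via a harmonic-oscillator half-turn, using that on the torus such a Hamiltonian is available only locally, so one must patch — this is where the compact-support cutoffs of Lemma~\ref{lem10}-type and Theorem~\ref{th7} enter), and (ii) the $q$-reflection $(q,p)\mapsto(-q,p)$, which is the genuinely non-Hamiltonian piece and must be handled by a separate trick.

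The workhorse for piece (ii) is the observation that, \emph{when acting on densities}, pre-composition with $q\mapsto -q$ can be approximated by a sequence of Hamiltonian diffeomorphisms concentrated near a grid, exactly in the spirit of Lemmas~\ref{lem9}–\ref{lem10}: one chops $\T^d$ into a fine mesh of cubes, the permutation sending each cube $C$ to its reflection $-C$ is a permutation of the mesh (the reflection $q\mapsto -q$ of $\T^d$ maps the grid $M_h$ to itself when $M_h$ is chosen symmetric), and such a permutation is approximated in $L^r$ by a Hamiltonian diffeomorphism by Lemma~\ref{lem10}; in the $p$-variable one simultaneously uses a harmonic-oscillator half-turn to flip $p\mapsto-p$ cube-by-cube, the same $h_w$-type construction as in the proof of Lemma~\ref{lem10}. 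Then Lemma~\ref{lem3} upgrades from $\mathcal{C}^\infty_c$ to all of $L^r$, and Lemma~\ref{lem4} together with the fact that $e^{\tau\f{H_0}}$-pieces have small time lets one assemble the pieces while keeping total time $<\e$. The main obstacle — and the reason this is stated only at the level of densities — is precisely the orientation/symplectic-form obstruction: $S^*\om=-\om$, so no genuine Hamiltonian diffeomorphism equals $S$, and the argument must exploit in an essential way that $L^r$-closeness of $\ro\circ\phi$ to $\ro\circ S$ does \emph{not} require $\phi$ to be $\mathcal{C}^0$-close to $S$ \emph{globally} — only that $\phi$ moves mass the way $S$ does, which is captured by the mesh-permutation approximation. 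Making the mesh argument respect the simultaneous $q$- and $p$-reflections with compactly supported Hamiltonians, and controlling the $L^r$-error uniformly, is the technical heart; everything else is bookkeeping via Lemmas~\ref{lem3} and \ref{lem4}.
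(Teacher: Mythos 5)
Your general philosophy is the right one — reduce to $\rho\in\mathcal{C}^\infty_c$ via Lemma~\ref{lem3}, replace $\rho$ by a step function on a fine mesh, and then exploit the fact that on densities one only needs a reachable Hamiltonian map $\phi$ to send each mesh cube to the \emph{same cube} that $S$ does (not to agree with $S$ pointwise). But the specific decomposition you propose is both more complicated than the paper's and has a genuine gap.

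The paper's route is much shorter and avoids both of your sub-problems (i) and (ii). It does not factor $S$ through the global reflection $q\mapsto-q$ at all. Instead, after organizing the $N+1$ relevant cubes into columns $\mathcal{C}_j$ of common $q$-center $q^j$, it takes a compactly supported Hamiltonian of harmonic-oscillator type $\tfrac{|p|^2}{2}+\tfrac{|q-q^j|^2}{2w^2}$ and runs its flow for half a period $\pi w$. The crucial point you missed is the \emph{choice of center}: this is a rotation by $\pi$ about $(q^j,0)$, not about the origin of a chart. It sends $(q^j+\tilde q,\,p)\mapsto(q^j-\tilde q,\,-p)$, which is \emph{not} $S(q^j+\tilde q,p)=(q^j+\tilde q,-p)$ pointwise, but the image of every cube $C((q^j,p^n),h-\eta)$ under either map is the \emph{same} cube $C((q^j,-p^n),h-\eta)$. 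So the indicator identity $\1_{C(m_n,h-\eta)}\circ\phi=\1_{C(m_n,h-\eta)}\circ S$ holds outright, and composing all the column rotations $\varphi_j$ gives the required $\phi\in\DHa(T^*\T^d)$, which is approximately reachable in total time proportional to $w$, hence small. No $q$-reflection ever enters.

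Concretely, the gaps in your version are: (a) Your piece (ii), the map $(q,p)\mapsto(-q,p)$, is \emph{anti}-symplectic ($\omega\mapsto-\omega$), so like $S$ it is nowhere a symplectomorphism; this is not resolved by choosing a symmetric mesh — you would still need an argument that its cube-action is realizable by Hamiltonian maps, which is essentially the whole lemma again and so is circular as a sub-step. (b) Your piece (i), the assertion that $(q,p)\mapsto(-q,-p)$ on $T^*\T^d$ is Hamiltonian-isotopic to the identity, is exactly the kind of global symplectic-topology statement the paper is careful to avoid (you yourself flag ``careful here''); it is not needed, because the paper never uses the global half-turn $(q,p)\mapsto(-q,-p)$, only compactly supported half-turns centered at each $(q^j,0)$. (c) The worry about degree of $q\mapsto-q$ on $\T^d$ is a red herring — it is a diffeomorphism; the real obstruction is the sign of $\omega$, which you do identify but then do not actually overcome. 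In short, the right observation is that a Hamiltonian half-turn about $(q^j,0)$ already reproduces the cube-level action of $S$ on an entire column, so no global anti-symplectic piece need be emulated.
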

\noindent Before going through the proof of Lemma~\ref{lem6}, let us explain why it is useful. According to Lemma~\ref{lem6} and reasoning as in Lemma~\ref{lem4}, for $\tau >0$, the density $\ro\circ S \circ e^{\tau\f{\frac{|p|^2}{2}}}\circ S$ is approximately reachable in time $\tau$ from $\ro$. Thanks to the relation
\begin{equation}\label{eq:backwards-density}
S \circ e^{\tau\f{\frac{|p|^2}{2}}}\circ S=e^{-\tau \f{\frac{|p|^2}{2}}},
\end{equation}
we thus get that $\ro\circ e^{-\tau \f{\frac{|p|^2}{2}}}$ is approximately reachable in time $\tau$ from $\ro$. 
Hence, 
at the level of the densities, 
system \eqref{eq:liouville}, \eqref{eq:torus} 
can be approximately made behave as the time-reversion of the drift. 
It is not clear whether this 
can be done also
at the level of Hamiltonian diffeomorphisms on $T^*\T^d$.

\begin{proof}[Proof of Lemma~\ref{lem6}] According to Lemma~\ref{lem3}, it is sufficient to prove the result for $\ro \in \mathcal{C}^{\infty}_c(T^*M)$. 
As in the proof of Theorem~\ref{thm:closure-orbit}, 
we introduce a cubic mesh 
$M_h=\left\{m_n \mid n \in \N\right\}$
of $T^*M$ of size $h$. 
Recall that, by definition,  
$T^*M=\cup_{n\in \N}\ov{C}(m_n,h)$ and $\cup_{n\in \N}C(m_n,h)$ has zero-measure complement in $T^*M$. 
Let $K \subset T^*M$ be a compact set containing the support of $\ro$. Given $\e >0$, since $\ro$ is uniformly continuous over $K$, 
for $h$ small enough there exist $N\in \N$ and $\eta\in (0,h)$  such that 
$$\left\|\ro-\sum_{n=0}^N \ro(m_n)\1_{C(m_n,h-\eta)}\right\|_{L^r} < \e.$$
For every volume-preserving change of variables $\phi:T^*M\to T^*M$ (including $\phi=S$),  we have 
$$\left\|\left(\ro-\sum_{n=N}^N \ro(m_n)\1_{C(m_n,h-\eta)}\right)\circ \phi\right\|_{L^r}
<\e.$$
So it is sufficient to find $\phi \in \DHa(T^*M)$  that is small-time approximately reachable and such that $\1_{C(m_n,h-\eta)}\circ \phi=
\1_{C(m_n,h-\eta)}\circ S$ 
for every $n \in \llbracket 0,N \rrbracket$.
The image by $S$ of a cube of center $m=(q,p)$ is a cube of center $(q,-p)$. We will 
emulate the action of 
$S$, which is the symmetry with respect to the space $\left\{p=0\right\}$, by a rotation of center $(q,0)$ and angle $\frac{\pi}{2}$ on each plane $(q_i,p_i)$. The two transformations differ pointwise, but their images of a cube of center $m$ coincide, see \cref{Symmetry 1}.
\begin{center}
\includegraphics[scale=0.35]{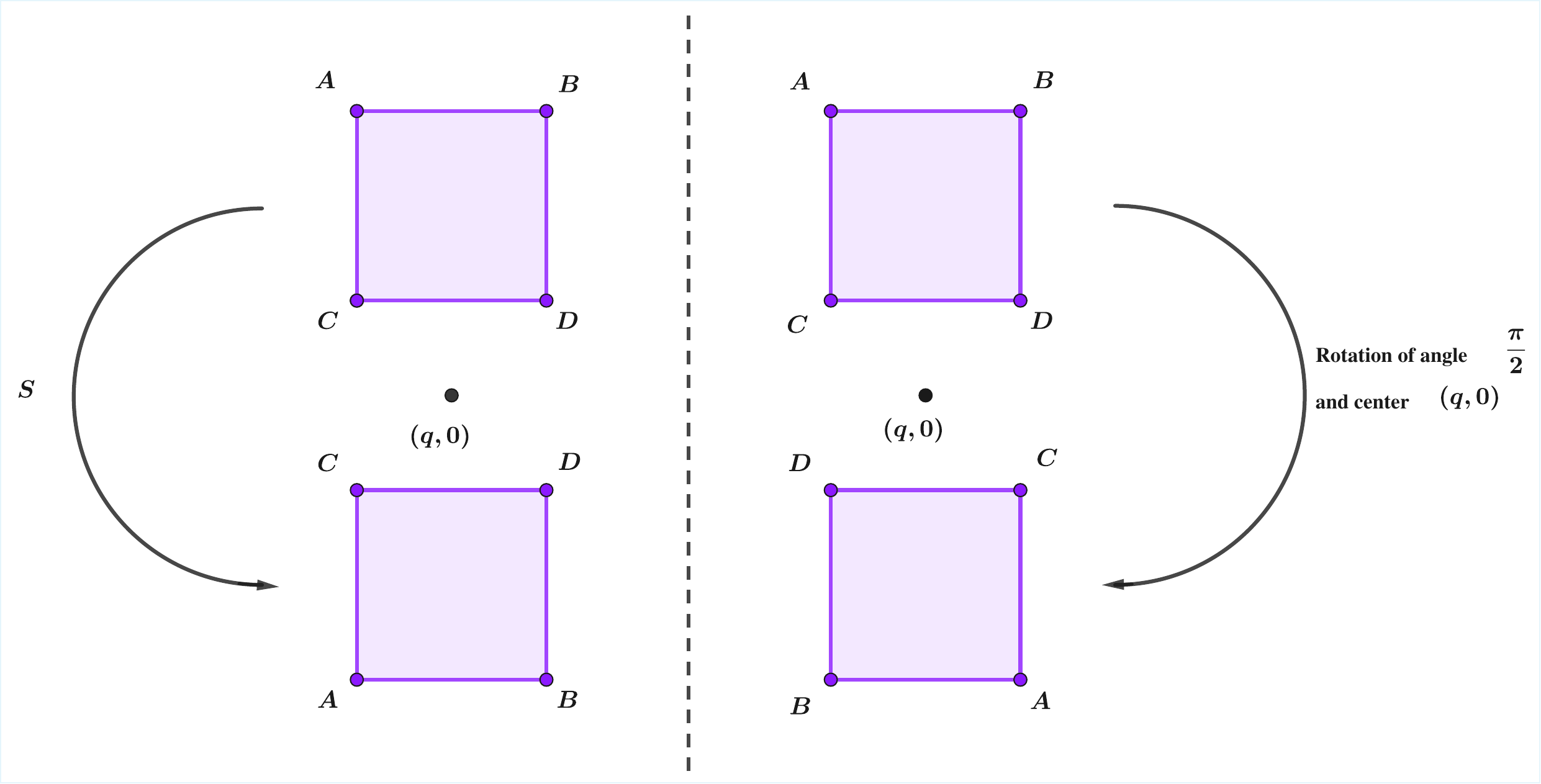} 
\captionof{figure}{Emulation of $S$ by a Hamiltonian rotation}\label{Symmetry 1}
\end{center}
As in the proof of Theorem~\ref{thm:closure-orbit}, we organize the $N+1$ cubes of radius $h-\eta$ into columns $\mathcal{C}_j$, $j\in J$, 
such that 
the centers of 
all cubes in $\mathcal{C}_j$ have the same
$q$-component $q^j$. 
We consider $j\in J$, $w
>0$ (to be fixed later) and $f \in \smooth(\T^d_q,\R)$ 
such that 
\[
f(q)=\begin{cases}
\frac{|q-q^j|^2}{2w^2} &
\mbox{if }  |q-q^j|< h,\\
0&
\mbox{if }  |q-q^j|> h+\n.
\end{cases}
\]
In particular, 
if $(q,p)$ belongs to a cube in $\mathcal{C}_{j'}$ with $j'\in J\setminus\{j\}$, then $f(q)=0$. 

According to Proposition~\ref{th2} and Theorem~\ref{th7}, $\ro \circ 
e^{T\f{(\frac{|p|^2}{2}+f)}}
$, $T>0$, is approximately reachable in time $T$. 

Integrating the Hamiltonian vector field $\frac{|p|^2}{2}+\frac{|q-q^j|^2}{2w^2}$ on 
$\Omega^j=\{(q,p)\in V\mid |q-q^j|<h\}$, 
it turns out that 
for every 
$(q,p)$ such that $|q-q^j|^2+\frac{|p|^2}{w^2}<h^2$ and  every $T>0$,
$e^{T \f{(\frac{|p|^2}{2}+f)}}(q,p)\in \Omega^j$,
with $e^{\pi w \f{(\frac{|p|^2}{2}+f})}(q^j+\tilde q,p)=(q^j-\tilde q,-p)$. 
 Choosing $w$ small enough, the ellipsoid 
\[\mathcal{E}=\left\{(q,p)\mid |q-q^j|^2+\frac{|p|^2}{w^2}<h^2\right\}\]
contains all cubes in $\mathcal{C}_j$, see \cref{Symmetry 2}. 
Set $\varphi_j=e^{\pi w \f{(\frac{|p|^2}{2}+f})}$.

\begin{center} 
\includegraphics[scale=0.35]{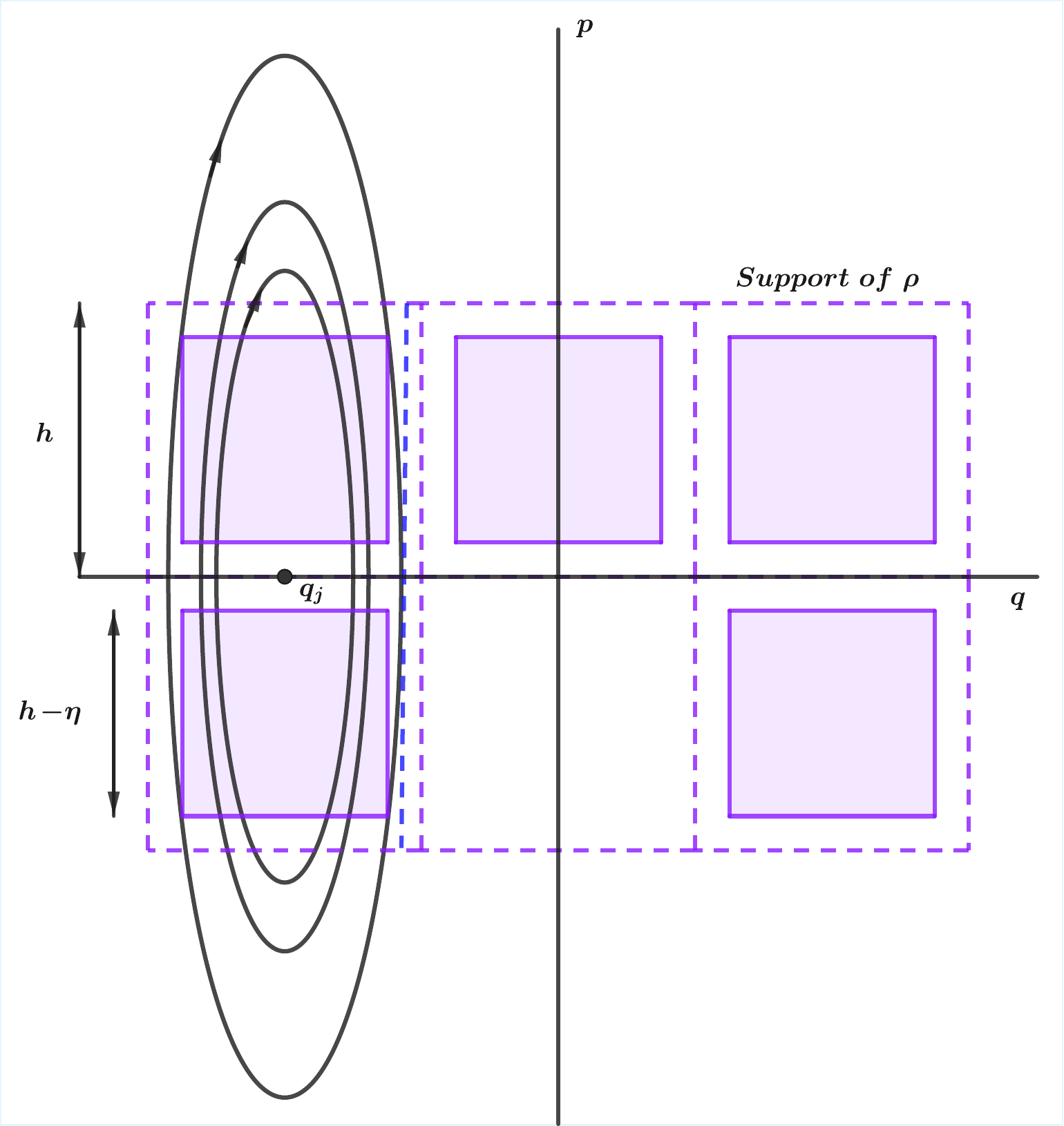}
\captionof{figure}{Permutation of two cubes in $\mathcal{C}_j$}\label{Symmetry 2}
\end{center}
After time $\pi w$, 
every cube in $\mathcal{C}_j$ is sent by $\varphi_j$ to 
its image through $S$, while $\varphi_j$ preserves all cubes in $\mathcal{C}_{j'}$ for $j'\in J\setminus\{j\}$. Moreover, $\varphi_j$ is approximately reachable in time $\pi w$, which is arbitrarily small if $w$ is. 

The proof is concluded by considering as Hamiltonian diffeomorphism $\phi$ emulating the action of $S$ on the cubes $C(m_n,h-\eta)$, $n\in \llbracket0,N\rrbracket$, the composition of all $\varphi_j$ for $j\in J$.  
\end{proof}
\subsection{Horizontal shears on $T^*\T^d$ at the level of densities}
\begin{theorem}
System \eqref{eq:liouville}, \eqref{eq:torus}, satisfies, for any $\rho_0\in L^r(T^*\T^d)$,
$$\left\{\rho_0\circ \phi\mid \phi \in\mathcal{T}\right\}\subset \ov{R_{\rm st}}(\rho_0). $$
\end{theorem}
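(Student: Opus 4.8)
The goal is to show that for every $g\in\smooth(\R^d_p,\R)$ the horizontal shear $e^{\f g}$ can be emulated, at the level of densities, by the Liouville flow of system \eqref{eq:liouville}, \eqref{eq:torus}. The strategy is to combine what is already available on $T^*\T^d$: by Theorem~\ref{th7} all vertical shears are STAR (hence STAR at the level of densities), and by Lemma~\ref{lem6} together with relation \eqref{eq:backwards-density} the backward drift flow $e^{-\tau\f{|p|^2/2}}$ is approximately reachable at the level of densities in arbitrarily small time. Consequently the function $\frac{|p|^2}{2}$, and therefore $|p|^2$, is STAR at the level of densities, and by Proposition~\ref{prop-densities} the set of STAR functions at the level of densities is a Lie subalgebra of $\smooth(T^*\T^d,\R)$. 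The plan is then to reproduce, at the level of densities, the bracket-generation argument used in the proof of Theorem~\ref{th6}.

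First I would record that, by Lemma~\ref{lem6}, \eqref{eq:backwards-density}, Lemma~\ref{lem4}, and the fact that $e^{\tau\f{|p|^2/2}}$ is reachable with the zero control, the function $|p|^2$ is STAR at the level of densities; likewise every $f\in\smooth(\T^d_q,\R)$ that is STAR (in particular, by Theorem~\ref{th7} and the proof of Theorem~\ref{th7}, every trigonometric polynomial, since $\mathcal{H}_\infty$ is dense in $\smooth(\T^d_q,\R)$ and STAR-at-densities is closed under the limits provided by Proposition~\ref{prop2} and Remark~\ref{rem:lem5}) is STAR at the level of densities. Next, using Proposition~\ref{prop-densities}, the iterated brackets $\ad^k_{|p|^2/2}f$ are STAR at the level of densities for every such $f$ and every $k\in\N$. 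Taking $f(q)=\frac{1}{m!}q_1^{m_1}\cdots q_d^{m_d}$ locally — more precisely, since on $T^*\T^d$ the function $q\mapsto q_i$ is not globally defined, one replaces it on a compact set by a smooth function on $\T^d_q$ and uses the diagonal/localization argument of Lemma~\ref{lem1} together with Lemma~\ref{lem3} to reduce to $\ro\in\mathcal{C}^\infty_c$ supported in a fixed compact — one gets that every monomial $P(p)=p_1^{m_1}\cdots p_d^{m_d}$ is STAR at the level of densities. By density of polynomials in $\smooth(\R^d_p,\R)$, Proposition~\ref{prop2} and Remark~\ref{rem:lem5}, every $g\in\smooth(\R^d_p,\R)$ is STAR at the level of densities, i.e. $\ro_0\circ e^{s\f g}\in\ov{R_{\rm st}}(\ro_0)$ for all $s\in\R$; in particular this covers all of $\mathcal{T}$.

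The main obstacle is exactly the point where the Euclidean proof used globally defined coordinate functions $q_i$: on the torus these do not exist, so one cannot directly write the Hermite-type generating function. The fix is to localize — fix $\ro_0\in\mathcal{C}^\infty_c(T^*\T^d)$, let $K$ be a compact neighborhood of its support, choose $\tilde f_i\in\smooth(\T^d_q,\R)$ agreeing with a coordinate chart on the $q$-projection of a slightly larger compact set, note that $\tilde f_i$ is STAR by Theorem~\ref{th7} and hence STAR at densities, and observe that on $K$ the flow $e^{s\f P}$ produced from $\ad^m_{|p|^2/2}(\frac{1}{m!}\tilde f_1^{m_1}\cdots\tilde f_d^{m_d})$ agrees with $e^{s\f P}$ for the genuine monomial $P(p)$. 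Since the solution of \eqref{eq:liouville} only sees $\ro_0$ on a compact set and the various approximation lemmas (Lemma~\ref{lem3}, Lemma~\ref{lem4}, Remark~\ref{rem:lem5}) allow passing from $\mathcal{C}^\infty_c$ to $L^r$ and from compact-open convergence of diffeomorphisms to $L^r$-convergence of transported densities, this localization is harmless. A secondary technical point to check is that $\frac{|p|^2}{2}$ is complete and that the ``backward drift as a density symmetry'' construction of Lemma~\ref{lem6}, which is stated for $e^{\tau\f{|p|^2/2}}$, indeed yields $|p|^2$ as a STAR function at densities via Proposition~\ref{prop-densities} applied to the subalgebra — this is routine given the results already proved.
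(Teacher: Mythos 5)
The decisive step of your plan — replacing the coordinate functions $q_i$ (which do not exist globally on $\T^d$) by functions $\tilde f_i\in\smooth(\T^d_q,\R)$ that ``agree with a coordinate chart on the $q$-projection of a slightly larger compact set'' — does not go through. Since $\T^d$ is itself compact, a compact set $K\subset T^*\T^d$ only constrains the momentum variables: its $q$-projection is a compact subset of $\T^d$ that can perfectly well be all of $\T^d$ (this already happens for the support of a generic $\rho_0\in\mathcal C^\infty_c(T^*\T^d)$). In that case there is no smooth function on $\T^d$ agreeing with a coordinate chart on a neighborhood of the $q$-projection of $K$, so the object $\tilde f_i$ you need does not exist, and the reduction to the Euclidean computation $\ad^m_{|p|^2/2}\bigl(\tfrac{1}{m!}q_1^{m_1}\cdots q_d^{m_d}\bigr)=P$ collapses. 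This is not a technicality one can localize away: it is precisely the obstruction that distinguishes the torus case from the Euclidean one.

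The paper avoids the issue by never attempting to reconstruct a $q$-chart. It works exclusively with the globally defined generators $\cos(q_j),\sin(q_j)$, and exploits two algebraic identities. First, the iterated brackets
$$\ad_{\frac{|p|^2}{2}}^{2\ell}\cos(q_j)=(-1)^\ell p_j^{2\ell}\cos(q_j),\qquad \ad_{\frac{|p|^2}{2}}^{2\ell+1}\cos(q_j)=(-1)^\ell p_j^{2\ell+1}\sin(q_j),$$
(and the analogous ones with $\sin(q_j)$) produce $p_j^k\cos(q_j)$ and $p_j^k\sin(q_j)$ as STAR functions at the level of densities. Second, a further Poisson bracket with $\sin(q_j)$ or $\cos(q_j)$, e.g.
$$\tfrac{1}{k+1}\bigl\{p_j^{k+1}\cos(q_j),\sin(q_j)\bigr\}=p_j^k\cos^2(q_j),\qquad \tfrac{1}{k+1}\bigl\{p_j^{k+1}\sin(q_j),-\cos(q_j)\bigr\}=p_j^k\sin^2(q_j),$$
whose sum is $p_j^k$. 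The mixed monomials $p_1^{m_1}\cdots p_d^{m_d}$ are then obtained by nesting such brackets with $\cos(q_i),\sin(q_i)$ for the various $i$. The part of your argument that is correct and in the spirit of the paper is the use of Lemma~\ref{lem6} and relation \eqref{eq:backwards-density} to obtain backward drift propagation at the level of densities (this is exactly the first step of the paper's proof), and the final passage from monomials to general $g\in\smooth(\R^d_p,\R)$ by density; but the central bracket-generation step needs the trigonometric identities rather than the localization you propose.
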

\begin{proof}
\underline{First step.} Let us show that, if $f
\in \smooth(T^*\T^d,\R)$ is STAR at the level of densities, then 
the same is true for
$\left\{|p|^2,f\right\}$. 
According to Lemma~\ref{lem6} and relation \eqref{eq:backwards-density}, for every $\tau>0$ the density
\[\rho_0\circ e^{\tau\f{\frac{|p|^2}{2}}}e^{\frac{1}{\tau}\f{f}}e^{-\tau \f{\frac{|p|^2}{2}}}\]
is approximately reachable in time $2\tau$ from $\rho_0$. Moreover, by Proposition~\ref{prop1}, given $K \subset T^*M$ compact and $\ell\in \N$,
$$ e^{\tau\f{\frac{|p|^2}{2}}}e^{\frac{1}{\tau}\f{f}}e^{-\tau \f{\frac{|p|^2}{2}}}=\exp(\frac{1}{\tau}\f{f}+\f{\left\{\frac{|p|^2}{2},f\right\}}+\f{w(\tau)}), \qquad \mbox{with }\|\f{w(\tau)}\|_{\ell,K}=O(\tau). $$
 Since $f$ is STAR, 
 we deduce that $\rho_0\circ e^{\f{\left\{\frac{|p|^2}{2},f(q)\right\}}+\f{w(\tau)}}$ is approximately reachable in time $2\tau$ from $\rho_0$. By letting $\tau\to 0$, we obtain the desired property.
\\ \\
\underline{Second step.} 
Consider $j\in\llbracket1,d\rrbracket$ and 
$k\in \N^*$ and let us 
show that 
the function $p_j^k$ is STAR at the level of densities. 
According to Theorem~\ref{th7}, the functions $\cos(q_j)$ and $\sin(q_j)$ are STAR. Thus, by the previous step, for every $\ell\in \N$
the following functions are STAR at the level of densities:
\begin{equation*}
    \ad_{\frac{|p|^2}{2}}^{2\ell}\cos(q_j)=(-1)^\ell p_j^{2\ell}\cos(q_j),\qquad \ad_{\frac{|p|^2}{2}}^{2\ell+1}\cos(q_j)=(-1)^\ell p_j^{2\ell+1}\sin(q_j),
\end{equation*}
\begin{equation*}
    \ad_{\frac{|p|^2}{2}}^{2\ell}\sin(q_j)=(-1)^\ell p_j^{2\ell}\sin(q_j),\qquad \ad_{\frac{|p|^2}{2}}^{2\ell+1}\sin(q_j)=(-1)^{\ell+1}p_j^{2\ell+1}\cos(q_j).
\end{equation*}
In particular, writing $k=2\ell$ or $k=2\ell+1$ depending on the parity of $k$, $p_j^k\cos(q_j)$ and $p_j^k\sin(q_j)$ are STAR at the level of densities. 
Thus,
by Proposition~\ref{prop-densities},
the Hamiltonian functions
$$\frac{1}{k+1}\left\{p_j^{k+1}\cos(q_j),\sin(q_j)\right\}=p_j^k\cos^2(q_j), \qquad \frac{1}{k+1}\left\{p_j^{k+1}\sin(q_j),-\cos(q_j)\right\}=p_j^k\sin^2(q_j),$$
 are STAR at the level of the densities as well. By taking their sum, $p_j^k$ is STAR at the level of densities.
\\ \\ 
\underline{Third step.} Let us show now that every monomial $p_{1}^{m_1}\dots p_{d}^{m_d}, m_1,\dots,m_d\in \N,$ is STAR at the level of densities. We show that $p_1^mp_2^k$ is STAR at the level of densities and the method can be easily generalized to an arbitrary number of variables. 
By Proposition~\ref{prop-densities},
 the  function
$$\frac{1}{(m+2)(m+1)}\left\{ \left\{p_1^{m+2},\sin(q_1)\cos(q_2)\right\},\sin(q_1)\right\}=p_1^m\cos^2(q_1)\cos(q_2)$$ 
 is STAR at the level of densities, 
and similarly one gets that the same is true for $p_1^m\sin^2(q_1)\cos(q_2)$. Taking a linear combination of the two functions, $p_1^m\cos(q_2)$ is STAR at the level of densities. Similarly, $p_1^m\sin(q_2)$ is also STAR at the level of densities. Then,
$$\frac{1}{(k+2)(k+1)}\left\{\left\{p_2^{k+2},p_1^m\sin(q_2)\right\},\sin(q_2)\right\}=p_1^mp_2^k\cos^2(q_2),$$
is STAR at the level of densities and similarly one proves the same for $p_1^mp_2^k\sin^2(q_2)$. Finally, 
$p_1^mp_2^k$ is STAR at the level of densities, concluding the proof of the third step. 

The conclusion follows from the density of the polynomials in $\smooth(\R^d_p,\R)$.
\end{proof}

\section{Small-time exact controllability of finite ensembles of points in $T^*\R^d$ and $T^*\T^d$}\label{sec:8}

In this section, we detail that the small-time controllability of finite ensembles of points for systems \eqref{eq:euclidean} and \eqref{eq:torus} can be proved not only approximately, but also in the exact sense. This is a consequence of the fact that a finite-dimensional control systems that is approximately controllable and Lie bracket generating, is also controllable (see, e.g., \cite[Corollary~8.3]{AS}).

In what follows, $M$  denotes either $\R^d$ or $\T^d$ and $V=T^*M$. Given any $N\in\N$, let $\Delta^N \subset V^N$ be the set of $N$-uples $(\gamma_1,\dots,\gamma_N)$ with (at least) two coinciding components $\gamma_i=\gamma_j$ for some $i\neq j$. The space $V^{(N)}:=V^N \setminus \Delta^N$ has a structure of a smooth manifold. For each $\gamma \in V^{(N)}$ the tangent space $T_{\gamma}V^{(N)}$ is isomorphic to 
$T_{\gamma_1}V \times \dots \times T_{\gamma_N}V.$
First we consider a lift of the controlled systems with controlled Hamiltonian \eqref{eq:euclidean} and \eqref{eq:torus} defined respectively on $(T^*\R^d)^{(N)}$ and $(T^*\T^d)^{(N)}$. These systems are of the form \eqref{eq:hamilton}. The lift on $V^{(N)}$ is then defined by 
\begin{equation} \label{eq8bis}
    \dot{\gamma}=\f{\mathcal{H}_{u(t)}}(\gamma), \qquad \gamma=(\gamma_1,\dots,\gamma_N) \in V^{(N)},
\end{equation}
where $\mathcal{H}_u=\sum_{j=1}^{N}H_u(\gamma_j)$, the Hamiltonian $H_u$ is defined in \eqref{eq:mechanical-hamiltonian}, and
$\f{\mathcal{H}_u}(\gamma)=(\f{H_u}(\gamma_1),\dots, \f{H_u}(\gamma_N)).$ We then define the following family of Hamiltonians on $V^{(N)}$
\begin{equation*}
    \mathcal{F} = \left\{ f^{(N)}:(q^1,\dots,q^N)\mapsto \sum_{j=1}^N f(q^j) \mid f \in \smooth(M_q,\R) \right\}.
\end{equation*}
As a direct consequence of Theorems~\ref{th5} and \ref{th7}, we obtain the following Lie extension property. 
\begin{prop}
Consider the system 
\begin{equation} \label{eq9bis}
    \dot{\gamma}=\f{X}(\gamma), \qquad X\in \mathcal{F},\ \gamma \in V^{(N)}.
\end{equation}
For every $\tau >0$, the reachable set of system (\ref{eq9bis}) is contained in the small-time approximately reachable set of system (\ref{eq8bis}).
\end{prop}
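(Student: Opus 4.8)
The plan is to realize each horizontal shear generator $f^{(N)}\in\mathcal F$ as a small-time approximately reachable diffeomorphism of the lifted system \eqref{eq8bis}, and then invoke Proposition~\ref{prop3}(i) to close under composition so that the entire reachable set of \eqref{eq9bis} is captured. The key observation is that the whole machinery developed in Sections~\ref{sec:4}--\ref{sec:6} is purely algebraic in the generators: Propositions~\ref{prop3}, \ref{prop4}, \ref{prop6} and the conjugation identity \eqref{prop:conjugation} all lift verbatim from $T^*M$ to the configuration manifold $V^{(N)}=V^N\setminus\Delta^N$, because the lifted drift $\mathcal H_0=\sum_j H_0(\gamma_j)$ and the lifted controlled potentials $V_i^{(N)}=\sum_j V_i(q^j)$ are again a mechanical Hamiltonian on $V^{(N)}$ (the diagonal sum structure is preserved by Poisson brackets: $\{\mathcal F^{(N)},\mathcal G^{(N)}\}=\{F,G\}^{(N)}$). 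Thus the notion of a STAR function on $V^{(N)}$ makes sense and the STAR functions form a Lie subalgebra containing $V_1^{(N)},\dots,V_m^{(N)}$.

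First I would record that the lifted controlled potentials are STAR on $V^{(N)}$ by the exact argument of Proposition~\ref{prop6} (constant control $u_i=s/\tau$, take $\tau\to0$, using Proposition~\ref{prop2}, which also lifts since completeness of $\f{\mathcal H_u}$ follows from completeness of $\f{H_u}$ on each factor). Then, following the proof of Theorem~\ref{th5}, I would show the lifted momenta $p_i^{(N)}:=\sum_j p_i^j$ are STAR (conjugate the drift flow by $e^{\pm\frac v\tau\f{q_i^{(N)}}}$, using \eqref{prop:conjugation}, and let $\tau\to0$), and that $\partial_{q_i}f$ being STAR whenever $f$ is STAR also lifts. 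This yields, exactly as in Theorem~\ref{th5}, that every $f^{(N)}$ with $f\in\smooth(M_q,\R)$ is STAR on $V^{(N)}$ — here one uses the same Hermite-function (for $M=\R^d$) or trigonometric-polynomial (for $M=\T^d$, via Theorem~\ref{th7}) density argument, together with Propositions~\ref{prop2} and \ref{prop3} to pass from a dense set of generators to all of $\smooth(M_q,\R)$. In particular $e^{s\f X}\in\overline{\mathcal R_{\rm st}}$ for every $X\in\mathcal F$ and every $s\in\R$.

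To conclude, let $\gamma_1\in V^{(N)}$ be reachable from $\gamma_0$ by \eqref{eq9bis} in some time; since the admissible controls for \eqref{eq9bis} are piecewise constant with values giving vector fields $\f X$, $X\in\mathcal F$, the corresponding flow is a finite composition $e^{s_k\f{X_k}}\circ\cdots\circ e^{s_1\f{X_1}}$ with $X_i\in\mathcal F$. Each factor lies in $\overline{\mathcal R_{\rm st}}$ by the previous paragraph, so by Proposition~\ref{prop3}(i) the composition lies in $\overline{\mathcal R_{\rm st}}$; evaluating at $\gamma_0$ shows $\gamma_1$ is small-time approximately reachable for \eqref{eq8bis}. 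Since this holds for arbitrarily small times $\tau>0$ in the definition of $\overline{\mathcal R_{\rm st}}$, the reachable set of \eqref{eq9bis} is contained in the small-time approximately reachable set of \eqref{eq8bis}.

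The main obstacle — really the only point requiring care — is checking that the reductions of Theorem~\ref{th5} genuinely survive the lift: one must verify that the auxiliary Hamiltonians used there ($q_i$, $p_i$, and their brackets with $f$) have diagonal-sum counterparts on $V^{(N)}$ and that the conjugation identity \eqref{prop:conjugation} and Proposition~\ref{prop2} apply on the open manifold $V^{(N)}$ (completeness and the absence of the diagonal $\Delta^N$ cause no difficulty, since the diagonal-preserving flows never approach $\Delta^N$ when started off it). Once this bookkeeping is done, the result is immediate from Theorems~\ref{th5} and \ref{th7} together with Proposition~\ref{prop3}.
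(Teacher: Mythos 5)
Your proof is correct, but it takes a considerably longer route than the paper intends. The Proposition is stated as a ``direct consequence'' of Theorems~\ref{th5} and \ref{th7}, and the intended one-line argument exploits the product structure of the lifted dynamics: since $\mathcal{H}_u=\sum_{j=1}^N H_u(\gamma_j)$ is a sum of decoupled copies, the flow of \eqref{eq8bis} on $V^{(N)}$ is precisely the $N$-fold Cartesian product $\Phi^\tau_{\mathcal{H}_u}=(\Phi^\tau_{H_u})^{\times N}$, and likewise $e^{s\f{f^{(N)}}}=(e^{s\f{f}})^{\times N}$ for $f\in\smooth(M_q,\R)$. Hence if a control $u(\cdot)$ on $[0,\tau]$ makes $\Phi^\tau_{H_u}$ $\e$-close to $e^{s\f{f}}$ in $C^\ell(K)$ (which Theorems~\ref{th5}/\ref{th7} provide), the same control makes $\Phi^\tau_{\mathcal{H}_u}$ close to $e^{s\f{f^{(N)}}}$ on any compact of $V^{(N)}$ whose factor projections lie in $K$; composing with Proposition~\ref{prop3}(i) then handles any trajectory of \eqref{eq9bis}, which is a finite composition of such flows. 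You instead propose to re-derive the entire STAR machinery (Propositions~\ref{prop3}--\ref{prop6}, Theorem~\ref{th5}, Theorem~\ref{th7}) from scratch on $V^{(N)}$, using that diagonal sums form a Poisson subalgebra with $\{F^{(N)},G^{(N)}\}=\{F,G\}^{(N)}$. This is valid --- you correctly handle completeness, the avoidance of $\Delta^N$, and the Hermite/trigonometric density step --- but it re-proves the theorems rather than reusing them. The product observation is what makes the result ``direct'': it transfers the conclusion of Theorems~\ref{th5}/\ref{th7} to $V^{(N)}$ without repeating the arguments. You do gesture at this shortcut in your final sentence, so the gap is one of economy rather than correctness.
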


Let us now prove the small-time approximate controllability of system \eqref{eq8bis}, which is an intermediate step towards the small-time exact controllability proved later in the section. 
\begin{prop}
 \label{th10}
    System (\ref{eq8bis}) is small-time approximately controllable in 
    $V^{(N)}$, i.e. , for any $N$ distinct initial configurations $(q^1,p^1),\dots,(q^N,p^N)\in V$, and $N$ distinct final configurations $(\Bar{q}^1,\Bar{p}^1),\dots,(\Bar{q}^N,\Bar{p}^N)\in V$, and $\varepsilon>0$, then there exist 
    $\tau\in[0,\varepsilon]$ and 
    $u(\cdot) \in \PWC([0,\tau],\R^m)$ such that 
    $$\|\Phi_{H_u}^\tau(q^i,p^i)-(\Bar{q}^i,\Bar{p}^i)\|< \varepsilon$$ for every $i\in\llbracket1,N\rrbracket$. 
\end{prop}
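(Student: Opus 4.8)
The plan is to deduce small-time approximate controllability of the lifted system \eqref{eq8bis} on $V^{(N)}$ from a Lie-bracket-generating (Hörmander) condition for the family $\mathcal{F}$ together with the fact, established in the previous proposition, that the reachable set of \eqref{eq9bis} is contained in $\overline{\mathcal{R}_{\rm st}}$ of \eqref{eq8bis}. Concretely, I would argue that the Lie algebra generated by the vector fields $\{\f{X}\mid X\in\mathcal{F}\}$ (together, in the torus case, with what is already available from Theorem~\ref{th7}) is the full tangent space $T_\gamma V^{(N)}\cong T_{\gamma_1}V\times\cdots\times T_{\gamma_N}V$ at every $\gamma\in V^{(N)}$; then classical control theory for symmetric (driftless) systems on a connected manifold — the system \eqref{eq9bis} has no drift since each $\f{X}$, $X\in\mathcal F$, is a genuine element of the Lie algebra and $-X\in\mathcal F$ whenever $X\in\mathcal F$ — gives exact, hence in particular approximate, controllability of \eqref{eq9bis}, and the inclusion of reachable sets transfers this to the small-time approximately reachable set of \eqref{eq8bis}. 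The small-time aspect is automatic because controllability of a driftless system holds in arbitrarily small time (rescale the controls).

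First I would make the Lie algebra computation explicit. For $M=\R^d$: taking $f(q)=q_i$ gives the constant vector field $-\partial_{p_i}$ on each factor simultaneously, i.e. the diagonal translation $(-\partial_{p_1^1},\dots,-\partial_{p_i^1};\dots)$ summed over the $N$ points. To separate the points one uses $f$'s that are localized bump functions: since the $\gamma_j$ are pairwise distinct, one can choose $f\in\smooth(\R^d_q,\R)$ supported near $q^j$ only, so that $\f{f^{(N)}}$ acts nontrivially only on the $j$-th factor; then varying $f$ among such localized functions and using Theorems~\ref{th5}, \ref{th6} (for $\R^d$) or Theorem~\ref{th7} plus the density-level results (for $\T^d$) one recovers, at each point separately, all of $\mathcal{V}$ and — via Poisson brackets with $|p|^2/2$ as in the proofs of those theorems — all of $\mathcal{T}$; hence the full tangent space at each factor, and thus at $\gamma$. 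The torus case is handled identically, with $f$ a trigonometric-polynomial bump (possible since trigonometric polynomials are dense and one only needs $C^\ell$-closeness on a compact set, which is all the reachability definition requires).

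The key steps, in order, are: (1) observe that $\mathcal{F}$ is symmetric ($X\in\mathcal F\Rightarrow -X\in\mathcal F$) and that $V^{(N)}$ is connected (true for $d\geq 2$; for $d=1$ one still has path-connectedness of $V^{(N)}=(T^*\R)^{(N)}$ or $(T^*\T)^{(N)}$, which I would note separately); (2) prove the Hörmander condition at every $\gamma\in V^{(N)}$ using localized Hamiltonians to decouple the $N$ points and invoking the single-point results of Sections~\ref{sec:6}–\ref{sec:7}; (3) apply the Rashevskii–Chow / orbit theorem to conclude exact controllability of \eqref{eq9bis}, which by time-rescaling holds in arbitrarily small time; (4) invoke the preceding proposition to transfer this into $\overline{\mathcal{R}_{\rm st}}$ of \eqref{eq8bis}, obtaining the claimed $\|\Phi^\tau_{H_u}(q^i,p^i)-(\bar q^i,\bar p^i)\|<\varepsilon$ for all $i$ with $\tau\leq\varepsilon$.

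The main obstacle I expect is step (2) done uniformly in $\gamma$: one must check that the localized bump-function construction genuinely produces $2dN$ independent tangent directions at an \emph{arbitrary} configuration with distinct points, including near the "diagonal" $\Delta^N$ where two points are close but distinct (so the supports of the localizing bumps must shrink accordingly), and one must make sure that the single-point STAR results — which are stated on $T^*M$ — localize correctly, i.e. that a Hamiltonian supported in a small ball still lies in $\overline{\mathcal R_{\rm st}}$ for the lifted system. This is essentially a bookkeeping argument using the compact-open topology and the fact that all the relevant approximations in Sections~\ref{sec:6}–\ref{sec:7} are $C^\ell$-on-compacts, but it is where the real content lies; the control-theoretic conclusion (steps 3–4) is then standard.
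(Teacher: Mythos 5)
Your proposal has a genuine gap at step (2), and it is a fatal one: the family $\mathcal{F}$ consists solely of lifted functions of the position variable $q$, so the associated Hamiltonian vector fields $\f{f^{(N)}}$, $f\in\smooth(M_q,\R)$, are all vertical shears and pairwise \emph{commute} (their Poisson brackets vanish identically, since $\{f^{(N)},g^{(N)}\}=\sum_j\bigl(\partial_{p^j}f^{(N)}\partial_{q^j}g^{(N)}-\partial_{q^j}f^{(N)}\partial_{p^j}g^{(N)}\bigr)=0$). The Lie algebra generated by $\{\f{X}\mid X\in\mathcal{F}\}$ is therefore abelian and its orbit through $\gamma$ only varies the momenta $p^j$ while fixing every $q^j$; the Chow--Rashevskii / Hörmander condition is simply false for system \eqref{eq9bis}, so your steps (2)--(3) do not apply. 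Taking localized bumps for $f$ does not help: localization decouples the $N$ points but gives you only vertical motions at each factor. What is missing is a horizontal (i.e.\ $q$-moving) direction, and on $T^*\T^d$ horizontal shears are \emph{not} known to be small-time approximately reachable at the level of the group — the paper explicitly flags this as open, and the horizontal results you invoke are only at the level of densities (Lemma~\ref{lem6} and its sequel), which does not translate into a statement about $\Phi^\tau_{H_u}$ acting on points of $V^{(N)}$.

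The paper's actual argument circumvents this by \emph{using the forward drift flow}. For $M=\R^d$ the proposition is an immediate corollary of Theorem~\ref{thm:euclidean}: since $\ov{\mathcal{R}_{\rm st}}={\rm DHam}(T^*\R^d)$ and one can always exhibit a Hamiltonian diffeomorphism mapping the $N$ initial points to the $N$ targets, the result follows with no orbit-theorem machinery. For $M=\T^d$ the paper gives an explicit small-time composition: first flow along $e^{\tau\f{|p|^2/2}}$ for an arbitrarily short time $\tau$ to perturb the positions into a generic (pairwise distinct in $q$) configuration, do the same in backward form on the targets, then use Whitney's extension (Lemma~\ref{lem2}) to build $f_\tau,g_\tau\in\smooth(\T^d_q,\R)$ so that the sandwich $e^{\f{g_\tau}}e^{\tau\f{|p|^2/2}}e^{\f{f_\tau}}$ sends each $(q^i,p^i)$ to $(\bar q^i,\bar p^i)$. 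The vertical shears $e^{\f{f_\tau}},e^{\f{g_\tau}}$ are small-time approximately reachable by Theorem~\ref{th7}, and the drift $e^{\tau\f{|p|^2/2}}$ is reachable in time $\tau$ by definition; crucially only \emph{forward} drift is used, so no time-reversal and no bracket-generating claim about $\mathcal{F}$ is needed. (The Lie-algebra-rank/analyticity argument you had in mind does appear in the paper, but in the \emph{next} theorem, for exact controllability, and there it brackets $\mathcal{F}$ against the drift $\f{\mathcal{H}_0}$, which is precisely the ingredient your proposal omits.)
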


The proof of Proposition~\ref{th10} is based on the following technical result, which can be deduced for instance from Whitney extension theorem.
\begin{lemma} \label{lem2}
    For $N$ distinct positions $q^1,\dots,q^N \in M$ and vectors $a^1,\dots,a^N \in \mathbb{R}^d$, there exists a smooth function $f \in \mathcal{C}^{\infty}(M,\R)$ such that $\frac{\p}{\p q}f(q^j)=a^j$ for every $j \in \llbracket 1,N \rrbracket$.
\end{lemma}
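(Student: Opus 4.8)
The plan is to construct $f$ explicitly as a finite sum of localized affine functions, rather than invoking the full strength of the Whitney extension theorem. First I would choose $\delta>0$ small enough that the closed balls $\ov{B}(q^j,2\delta)$, $j\in\llbracket1,N\rrbracket$, are pairwise disjoint in $M$ (this is possible since the $q^j$ are distinct and $M$ is a manifold; on $\T^d$ one works in local charts, which is harmless since everything is local). Then I would fix a single smooth bump function $\chi\in\smooth(M,\R)$ with $\chi\equiv 1$ on $B(0,\delta)$ and $\supp\chi\subset B(0,2\delta)$, and set
\begin{equation*}
f(q)=\sum_{j=1}^N \chi(q-q^j)\,a^j\cdot(q-q^j),
\end{equation*}
where on $\T^d$ the difference $q-q^j$ is read in the chart around $q^j$. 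Because the supports $\supp\chi(\cdot-q^j)\subset B(q^j,2\delta)$ are disjoint, near each $q^i$ only the $i$-th summand is nonzero, and there it equals $a^i\cdot(q-q^i)$; hence $\nabla_q f(q^i)=a^i$ for every $i$, which is the required property.

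The verification is then routine: $f$ is smooth as a finite sum of products of smooth functions, and the gradient computation $\nabla_q\big(\chi(q-q^i)\,a^i\cdot(q-q^i)\big)=\nabla\chi(q-q^i)\,(a^i\cdot(q-q^i))+\chi(q-q^i)\,a^i$ evaluated at $q=q^i$ gives $0+1\cdot a^i=a^i$, using $\chi(0)=1$. No compatibility conditions between the jets at different points arise precisely because the points are separated, so there is no genuine obstruction here at all.

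The only mildly delicate point — and the one I would state carefully — is the toral case: $q-q^j$ is not globally defined on $\T^d$, so one must commit to local coordinates near each $q^j$ and check that the bump function's support is small enough to stay inside a single chart and not to wrap around the torus. This is the reason for demanding the $\ov{B}(q^j,2\delta)$ to be disjoint \emph{and} each contained in a coordinate ball. Once that bookkeeping is fixed, the construction above works verbatim on both $\R^d$ and $\T^d$, and this is really all that is needed for the applications (Proposition~\ref{th10} and the constructions of Section~\ref{sec:8}), where only first-order data at finitely many points is prescribed. One could alternatively cite Whitney's extension theorem directly, as the paper suggests, but the self-contained construction is short and makes the "no obstruction" phenomenon transparent.
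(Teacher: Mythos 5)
Your explicit bump-function construction is correct and takes a genuinely different route from the paper, which simply remarks that Lemma~\ref{lem2} ``can be deduced for instance from Whitney extension theorem'' and gives no further detail. Both approaches rest on the same observation --- the points $q^1,\dots,q^N$ are separated, so no compatibility conditions arise between the prescribed first-order jets --- but your disjointly supported localized affine summands make this completely transparent and self-contained, at the small cost of the chart bookkeeping on $\T^d$ that you rightly flag (shrink $\delta$ until each $\ov{B}(q^j,2\delta)$ sits inside a coordinate ball and does not wrap around). As a bonus worth recording: since $\chi\equiv 1$ near the origin forces $\nabla\chi(0)=0$, your formula $f(q)=\sum_j\chi(q-q^j)\,a^j\cdot(q-q^j)$ automatically yields $\mathrm{Hess}_q f(q^i)=\nabla\chi(0)\otimes a^i+a^i\otimes\nabla\chi(0)=0$ as well, so your construction in fact proves the stronger statement used in \eqref{eq:localization} for the Lie algebra rank condition, where the paper again defers to Whitney. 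The Whitney citation is shorter and scales to arbitrary finite jets; your argument is elementary and makes the ``no obstruction'' phenomenon explicit. Either suffices for the applications in Section~\ref{sec:8}.
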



\begin{proof}[Proof of Proposition~\ref{th10}]
 For the case of $M=\R^d$, 
 the result 
 directly follows from the small-time approximated controllability in the group $\DHa(T^*\R^d)$.
 
 Let us consider the case $M=\T^d$. 
 The pairs of initial configurations $(q^i,p^i), i\in\llbracket1,N\rrbracket,$ are distinct so, for every $\delta >0$, there exists $\tau \in [0,\delta)$ such that the points $q^1 + \tau p^1,\dots,q^N + \tau p^N$ are pairwise distinct on the torus. 
 For every $\tau >0$, the diffeomorphism $e^{\tau \f{\frac{|p|^2}{2}}}$ is approximately reachable in time $\tau$ and $e^{\tau \f{\frac{|p|^2}{2}}}(q^i,p^i)=(q^i + \tau p^i,p^i)$ for $i\in\llbracket1,N\rrbracket$. So we can assume without loss of generality that the initial positions $q^1,\dots,q^N$ are distinct.
 Similarly,  we can assume without loss of generality that the final positions $\Bar{q}^1,\dots,\Bar{q}^N$ are distinct, up to replace them by $\Bar{q}^i - \tau \Bar{p}^i$ with $\tau>0$ arbitrarily small.
 
 For each $i\in \llbracket1,N\rrbracket$, let $\hat{p}^i\in \R^d$ be such that 
$\Bar{q}^i - q^i=\hat{p}^i$ modulo $2\pi$. Applying Lemma~\ref{lem2}, there exists a function $f_{\tau} \in \smooth(\T^d,\R)$ such that $\frac{\p}{\p q}f_{\tau}(q^i)=p^i - \frac{1}{\tau}\hat{p}^i$ for every $i\in \llbracket1,N\rrbracket$. As a consequence, $e^{\f{f_{\tau}}}(q^i,p^i)=(q^i,p^i-\frac{\p}{\p q}f_{\tau}(q^i))$ and then $e^{\tau \f{\frac{|p|^2}{2}}}e^{\f{f_{\tau}}}(q^i,p^i)=(q^i+\hat{p}^i,\frac{1}{\tau} \hat{p^i})=(\Bar{q}^i,\frac{1}{\tau} \hat{p}^i)$. 
Applying again Lemma~\ref{lem2}, there exists a smooth function $g_{\tau} \in \smooth(\T^{d},\R)$ such that $\frac{\p}{\p q}g_{\tau}(\Bar{q}^i)=-\Bar{p}^i+\frac{1}{\tau}\hat{p}^i$ for every $i\in \llbracket1,N\rrbracket$. Then $e^{\f{g_{\tau}}}(\Bar{q}^i,\frac{1}{\tau}\hat{p}^i)=(\Bar{q}^i,\Bar{p}^i)$. Finally, thanks to Theorem \ref{th7}, the diffeomorphism $e^{\f{g_{\tau}}}e^{\tau \f{\frac{|p|^2}{2}}}e^{\f{f_{\tau}}}$ is approximately reachable in time $\tau$ for every $\tau > 0$. 
 \end{proof}


\begin{theorem}
    System (\ref{eq8bis}) is small-time exactly controllable 
    in $V^{(N)}$.
\end{theorem}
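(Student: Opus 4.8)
The plan is to deduce small-time exact controllability from the small-time \emph{approximate} controllability already proved in Proposition~\ref{th10}, by checking that system~(\ref{eq8bis}) satisfies the Lie algebra rank condition at every point of $V^{(N)}$ and then invoking the finite-dimensional principle that a bracket-generating, approximately controllable control-affine system is controllable, see \cite[Corollary~8.3]{AS}; the small-time refinement is automatic, since the rank condition also yields small-time local accessibility.

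The core step is the rank condition. The system vector fields of (\ref{eq8bis}) are the drift $\f{\mathcal{H}_0}$, i.e.\ the lift of $\f{H_0}$, and the controlled fields $\f{V_j^{(N)}}$, $j\in\llbracket1,m\rrbracket$, where $V_j^{(N)}(\gamma)=\sum_{l}V_j(q^l)$. Since the symplectic form on $V^N$ splits as a direct sum over the factors, one has $\{F^{(N)},G^{(N)}\}=\{F,G\}^{(N)}$ for Hamiltonians of this ``diagonal'' type, so all iterated Poisson brackets of the generators stay diagonal, and hence
\[
\Lie_{\gamma}\{\f{\mathcal{H}_0},\f{V_1^{(N)}},\dots,\f{V_m^{(N)}}\}=\{(\f{h}(\gamma_1),\dots,\f{h}(\gamma_N))\mid h\in\mathfrak{h}\},
\]
where $\mathfrak{h}\subset\smooth(T^*M,\R)$ is the Poisson-Lie subalgebra generated by $H_0,V_1,\dots,V_m$. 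Thus the rank condition at $\gamma=(\gamma_1,\dots,\gamma_N)\in V^{(N)}$ is equivalent to surjectivity of the linear evaluation map $h\mapsto(\f{h}(\gamma_l))_{l}$ from $\mathfrak{h}$ onto $\bigoplus_l T_{\gamma_l}V$. I would then extract enough of $\mathfrak{h}$ from the explicit Hamiltonians: for (\ref{eq:euclidean}) one has $p_i=\{H_0,q_i\}\in\mathfrak{h}$, iterated brackets of $e^{-|q|^2/2}$ with the $q_i$'s and $p_i$'s produce every $P(q)e^{-|q|^2/2}$ with $P$ polynomial, and bracketing these with $H_0$ --- whose potential part $V_0(q)$ Poisson-commutes with functions of $q$ --- produces the corresponding functions linear in $p$; for (\ref{eq:torus}) the analogous computation, using that the frequencies (\ref{eq:frequencies}) generate $\Z^d$, yields all products of a trigonometric polynomial in $q$ by a polynomial of degree $\leq1$ in $p$, together with $H_0$ itself.

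Granting this, surjectivity of $h\mapsto(\f{h}(\gamma_l))_l$ follows by interpolation: when the positions $q^1,\dots,q^N$ are pairwise distinct, the functions of $q$ in $\mathfrak{h}$ realize arbitrary $\nabla_q h(\gamma_l)$ (hence arbitrary $p$-components of the target tangent vectors) while the linear-in-$p$ functions realize arbitrary $\nabla_p h(\gamma_l)$ (hence arbitrary $q$-components); when some positions coincide, one additionally uses $\f{H_0}$ and the brackets of $H_0$ with the linear-in-$p$ Hamiltonians, whose $\nabla_p$-component genuinely depends on $p$ and therefore separates momenta at a common position, noting that the $V_0$-dependent terms appearing in these brackets are functions of $q$ only and so leave the $\nabla_p$-component untouched. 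With the rank condition in hand the conclusion is standard: for every $\tau>0$ the time-$\leq\tau$ attainable set of (\ref{eq8bis}) from any point, and that of the time-reversed system, both have nonempty interior, while Proposition~\ref{th10} gives that the time-$\leq\tau$ attainable set from any point is dense; given $\gamma_0,\gamma_1\in V^{(N)}$ and $\tau>0$, the dense set of states reachable from $\gamma_0$ in time $\leq\tau/2$ meets the open set of states from which $\gamma_1$ is reachable in time $\leq\tau/2$, and concatenating the two controls steers $\gamma_0$ to $\gamma_1$ in time $\leq\tau$. The delicate point is the surjectivity verification at \emph{every} configuration --- in particular the degenerate ones with coinciding positions --- while keeping track of the $V_0$-dependent terms; everything else is routine assembly of Proposition~\ref{th10} with the cited facts on bracket-generating systems.
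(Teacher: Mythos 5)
Your top-level strategy is the same as the paper's: quote \cite[Corollary~8.3]{AS} to upgrade the small-time approximate controllability of Proposition~\ref{th10} to small-time exact controllability, provided the Lie algebra rank condition (LARC) holds at every $\gamma\in V^{(N)}$. Your observation that iterated brackets of the diagonal Hamiltonians remain diagonal, so that the Lie algebra evaluated at $\gamma$ is exactly $\{(\f{h}(\gamma_1),\dots,\f{h}(\gamma_N))\mid h\in\mathfrak{h}\}$ with $\mathfrak{h}$ the Poisson--Lie subalgebra generated by $H_0,V_1,\dots,V_m$, is also in line with the paper's reduction to the evaluation map.

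Where you diverge --- and where the gap lies --- is in the treatment of configurations with coinciding positions. The paper verifies LARC only at $\gamma$ with pairwise distinct $q^1,\dots,q^N$, constructing Whitney-extension functions with prescribed gradients and vanishing Hessians so that $\f{(f^i)^{(N)}}(\gamma)$ and $[\f{(f^i)^{(N)}},\f{\mathcal{H}_0}](\gamma)$ give a basis of $T_\gamma V^{(N)}$, then invokes density of $\Lie\{H_u\}$ in $\mathcal{F}$; for degenerate $\gamma$, it moves via the drift to a configuration $\gamma'$ with distinct positions and uses the \emph{analyticity} of the Hamiltonians $H_u$ and the orbit theorem to propagate LARC back to $\gamma$. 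You instead try to verify LARC directly at degenerate $\gamma$, by observing that the bracket of $H_0$ with a linear-in-$p$ Hamiltonian has $\nabla_p$-component genuinely depending on $p$. That idea is sound in principle --- linear-in-$p$ Hamiltonians $a(q)\cdot p$ have $\nabla_p h=a(q)$ and hence \emph{cannot} produce different $\dot q$-directions at two particles with $q^i=q^j$, so you do need quadratic-in-$p$ functions from $\mathfrak{h}$ --- but you stop at this remark and explicitly flag the surjectivity verification at degenerate configurations as ``the delicate point''. It genuinely is: making this rigorous for arbitrarily many particles sharing a position, while tracking which quadratic-in-$p$ Hamiltonians belong to $\mathfrak{h}$ after all the $V_0$-dependent corrections, is a nontrivial piece of bookkeeping that you have not carried out. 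The paper's route through analyticity is precisely designed to avoid this; your direct route is feasible but, as written, has a real gap at the degenerate configurations, which is the only nontrivial step in the whole argument.
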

\begin{proof}
It is a well-known consequence of Krener's theorem (see, e.g., \cite[Corollary~8.3]{AS}\footnote{the result is not written in the \emph{small-time} case, but the proof readily extends to such a case}) that 
if a finite-dimensional control system is small-time approximately controllable and satisfies the Lie  algebra rank condition, then it is small-time 
controllable. Hence, according to Proposition~\ref{th10}, we are left to check that (\ref{eq8bis}) satisfies the Lie algebra rank condition.

 Let $\gamma=(q^1,p^1,\dots,q^N,p^N) \in V^{(N)}$. Assume for now that the positions $q^1,\dots,q^N$ are distinct in $M$. 
 Let $v^1,\dots,v^d$ be a basis of $\R^d$.
 Similarly to what is done for Lemma~\ref{lem2}, 
 by the Whitney extension theorem
 there exist  $f^1,\dots,f^d\in \smooth(M,\R)$ such that
\begin{equation}\label{eq:localization}
-\nabla_q f^i(q^1)=v^i,\quad \nabla_q f^i(q^j)=0_d,\quad {\rm Hess}_q f^i(q^1)=0_{d\times d},\quad {\rm Hess}_q f^i(q^j)=0_{d\times d},
\end{equation}
for $i \in \llbracket 1,d \rrbracket$ and $j \in \llbracket 2,N \rrbracket$.
Thanks to \eqref{eq:localization}, we have ${\f{(f^i)^{(N)}}}(\gamma)=(0_d,v_i,0_d,\dots,0_d)$.
Moreover,
using again \eqref{eq:localization}, for $i\in \llbracket 1,d \rrbracket$ we compute $$\Bigl[{\f{(f^i)^{(N)}}},\f{\mathcal{H}_0}\Bigr](\gamma)=\f{(\sum_{j=1}^N-p^j\nabla_{q^j}f^i)}(\gamma)=(v_i,0_d,\dots,0_d),$$
where $\mathcal{H}_0(\gamma)=\sum_{j=1}^NH_0(\gamma_j)$ is the lifted drift. 
Replacing $q^1$ by $q^2,\dots,q^N$ in the above argument, we can generate $2dN$ vectors that form a basis of $T_{\gamma}V^{(N)}$. 


Since
the Lie algebra $\Lie \left\{ {H_u}
\mid u \in \R^m \right\}$ is dense in $\mathcal{F}$ for the compact-open topology (cf.~Theorems~\ref{th5} and \ref{th7}), it follows by continuity that system (\ref{eq8bis}) satisfies the Lie algebra rank condition at $\gamma$.


We are left to consider the case where $q^1,\dots,q^N$ are not pairwise distinct. 
Since $\gamma \in V^N \setminus \Delta^N$, the pairs $(q^1,p^1),\dots,(q^N,p^N)$ are distinct in $V$. Then 
there exists $\delta>0$ 
such that the positions $q^1 + \delta p^1,\dots,q^N + \delta p^N$ are pairwise distinct in $M$. Then 
$\gamma'=(q^1 + \delta p^1,p^1,\dots,q^N+\delta p^N,p^N)$ 
is approximately reachable from $\gamma$ and (\ref{eq8bis}) satisfies the Lie algebra rank condition in a neighborhood of $\gamma'$.
Since each Hamiltonian $H_u$, $u\in \R^m$, is analytic, it follows that 
the Lie algebra rank condition is satisfied at every point of the orbit through $\gamma'$ for system (\ref{eq8bis}), and in particular at $\gamma$. 
\end{proof}

\textbf{Acknowledgments.}
The authors wish to thank Ivan Beschastnyi for inspiring conversations at the origin of this project, and Andrei Agrachev, Sylvain Arguill\`ere, Pierre Berger, Borjan Geshkovski, Emmanuel Tr\'elat, and Claude Viterbo for enlightening discussions.

E.P. thanks the SMAI for supporting and the CIRM for hosting the
BOUM project ”Small-time controllability of Liouville transport equations along an Hamil-
tonian field”, where some ideas of this work were conceived. 

This work has been partly supported by  the ANR-DFG project CoRoMo
ANR-22-CE92-0077-01 and the ANR project QuBiCCS
 ANR-24-CE40-3008-01.
This project has received financial support from the CNRS
through the MITI interdisciplinary programs.

\bibliographystyle{siamplain}
\bibliography{references}

\end{document}